\newif\ifnever\neverfalse
\newcommand*{\CopyCounter}[2]{%
  \expandafter\def\csname c@#2\endcsname{\csname c@#1\endcsname}
  \expandafter\def\csname p@#2\endcsname{\csname p@#1\endcsname}
  \expandafter\def\csname the#2\endcsname{\csname the#1\endcsname}}
\numberwithin{Theorem}{section}
\theoremstyle{plain}
\newtheorem{prop}[Proposition]{Proposition}
\newtheorem{property}[Property]{Property}
\newtheorem{lemma}[Lemma]{Lemma}
\theoremstyle{definition}
\newtheorem{remark}[Remark]{Remark}
\newif\ifnever\neverfalse
\newif\iffullversion
\newif\ifoldstylefootnotes
\newcommand{\bm}[1]{\mbox{\boldmath${#1}$}}
\newcommand{\dist}[2]{\| {#1} - {#2} \|}
\newcommand{\domain}{\Omega}
\newcommand{\cdomain}{\widebar{\Omega}}
\newcommand{\boundary}{\partial \domain}
\newcommand{\x}{\bm{x}}
\newcommand{\T}{\bm{t}}
\newcommand{\ba}{\bm{a}}
\newcommand{\bb}{\bm{b}}
\newcommand{\bv}{\bm{v}}
\newcommand{\y}{\bm{y}}
\newcommand{\z}{\bm{z}}
\newcommand{\widebar}{\overline}
\newcommand{\R}{\bm{R}}
\newcommand{\fB}{\bm{f}}
\newcommand{\M}{{\mathcal{M}}}
\newcommand{\B}{{\mathcal{B}}}
\newcommand{\D}{{\mathcal{D}}}
\newcommand{\I}{{\mathcal{I}}}
\DeclareMathOperator{\Forall}{\;\forall\,}
\newcommand{\safe}{\mathcal{S}}
\newcommand{\nsafe}{\mathcal{U}}
\newcommand{\del}{\partial}
\newcommand{\A}{{\mathcal{A}}}
\newcommand{\J}{{\mathcal{J}}}
\newcommand{\control}{\bm{a}}
\newcommand{\runcost}{K}
\newcommand{\runcostb}{\hat{K}}
\newcommand{\fb}{f}
\newcommand{\val}{u} 
\newcommand{\term}{q} 
\newcommand{\maxb}{B}
\newcommand{\mfl}{v}
\newcommand{\edomain}{\domain_e}
\newcommand{\rdomain}{\domain_r}
\newcommand{\udomain}{\domain_{\nsafe}}
\newcommand{\sdomain}{\domain_{\safe}}
\newcommand{\bud}{b}
\newcommand{\budp}{\beta}
\newcommand{\bq}{\begin{equation}}
\newcommand{\eq}{\end{equation}}
\newcommand{\bal}{\begin{align}}
\newcommand{\eal}{\end{align}}
\newcommand{\bmat}{\left[\begin{matrix}}
\newcommand{\emat}{\end{matrix}\right]}
\newcommand{\h}{h}
\newcommand{\target}{\mathcal{T}}
\newcommand{\grad}{\nabla}
\newcommand{\reach}{\mathcal{R}}
\newcommand{\grid}{\mathcal{G}}
\newcommand{\Val}{W}
\newcommand{\vale}{w}
\newcommand{\valr}{w}
\newcommand{\interface}{\Gamma}
\newcommand{\valu}{w_1}
\newcommand{\vals}{w_2}
\newcommand{\safereach}{\safe_{\mathcal{R}}}
\newcommand{\nsafereach}{{\nsafe}_{\mathcal{R}}}
\newcommand{\Valu}{W_1}
\newcommand{\Vals}{W_2}
\newcommand{\Bd}{\mathcal{B}}
\newcommand{\Nb}{N_{\bud}}
\newcommand{\mflv}{{\tilde u}}
\newcommand{\mflV}{{\tilde U}}
\newcommand{\Mfl}{V}
\newcommand{\aux}{g}
\newcommand{\Aux}{G}
\newcommand{\Line}{\mathcal{L}}
\newcommand{\Valn}{U}
\newcommand{\domainreach}{{\domain}_{\reach}}
\newcommand{\marginfix}{
\setlength{\parskip}{0.01cm}
\setlength{\textwidth}{6.0in}
\setlength{\oddsidemargin}{-0.0 in}
\setlength{\evensidemargin}{0.0 in}
\setlength{\topmargin}{-0.5in}
\setlength{\textheight}{9.0 in}
}
  \renewenvironment{thebibliography}[1]{%
    \begin{oldthebibliography}{#1}%
      \setlength{\parskip}{.3ex}%
      \setlength{\itemsep}{.3ex}%
  }%
  {%
    \end{oldthebibliography}%
  }
\title{Optimal Control with Reset-Renewable Resources.}
\title{Optimal Control with Budget Constraints and Resets.}
\author{R. Takei$^{a,e}$}
\author{W. Chen$^{b,f}$}
\author{Z. Clawson$^{c,f}$}
\author{S. Kirov$^{d,f}$}
\author{A. Vladimirsky$^{b,c,g}$}
\address{$^A$ Department of Electrical Engineering and Computer Sciences, UC Berkeley, CA 94720} 	
\address{$^B$ Department of Mathematics, Cornell University, Ithaca, NY 14853}
\address{$^C$ Center for Applied Mathematics, Cornell University, Ithaca, NY 14853}
\address{$^D$ Department of Mathematical Sciences, Carnegie Mellon University, Pittsburgh, PA 15213}
\address{$^E$ Supported in part by ONR grants N0014-03-1-0071, N00014-07-1-0810, N00014-08-1-1119, DOE grant DE-FG02-05ER25710, an ARO MURI through
Rice University and the CHASE MURI grant 556016.}
\address{$^F$ Supported in part by the National Science Foundation through the Research
Experiences for Undergraduates Program at Cornell.} 	
\address{$^G$ Supported in part by the National Science Foundation grant 
DMS-1016150.}
\renewcommand*{\thefootnote}{\arabic{footnote}}
\begin{document}

\ifoldstylefootnotes
\maketitle

\else

\iffullversion
\centerline{\Large\textbf{Optimal Control with Reset-Renewable Resources.}}
\else
\centerline{\Large\textbf{Optimal Control with Budget Constraints and Resets.}}
\fi

\vspace*{.1in}


\renewcommand{\thefootnote}{\alph{footnote}}

{\Large
\centerline{R. Takei \footnotemark[1]${}^{,}$\footnotemark[5],
W. Chen\footnotemark[2]${}^{,}$\footnotemark[6],
Z. Clawson\footnotemark[3]${}^{,}$\footnotemark[6]${}^{,}$\footnotemark[7],
S. Kirov\footnotemark[4]${}^{,}$\footnotemark[6],
A. Vladimirsky\footnotemark[2]${}^{,}$\footnotemark[3]${}^{,}$\footnotemark[8]
\footnotetext[1]{\sc Department of Electrical Engineering and Computer Sciences, UC Berkeley, CA 94720}
\footnotetext[2]{\sc Department of Mathematics, Cornell University, Ithaca, NY 14853}
\footnotetext[3]{\sc Center for Applied Mathematics, Cornell University, Ithaca, NY 14853}
\footnotetext[4]{\sc Department of Mathematical Sciences, Carnegie Mellon University, Pittsburgh, PA 15213}
\footnotetext[5]{\sc Supported in part by ONR grants N0014-03-1-0071, N00014-07-1-0810, N00014-08-1-1119, DOE grant DE-FG02-05ER25710, an ARO MURI through
Rice University and the CHASE MURI grant 556016.}
\footnotetext[6]{\sc Supported in part by the NSF through the Research
Experiences for Undergraduates Program at Cornell.}
\footnotetext[7]{\sc Supported in part by the NSF Graduate Fellowship.}
\footnotetext[8]{\sc Supported in part by the National Science Foundation grant 
DMS-1016150.}
}}

\renewcommand*{\thefootnote}{\arabic{footnote}}

\fi

\vspace*{.1in}
\begin{abstract}
\noindent
We consider both discrete and continuous control problems constrained
by a fixed budget of some resource, which may be renewed upon entering
a preferred subset of the state space.  In the discrete case, we consider
\iffullversion
both deterministic and stochastic
\else
deterministic
\fi
shortest path problems on graphs with a full
budget reset in all preferred nodes.
In the continuous case, we derive augmented PDEs of optimal control,
which are then solved numerically
on the extended state space with a full/instantaneous
budget reset on the preferred subset.  We introduce an iterative
algorithm for solving these problems efficiently.
The method's performance is demonstrated on
a range of computational examples, including
optimal path planning with constraints on
prolonged visibility by a static enemy observer.

\iffullversion
In addition, we also develop an
algorithm that works
on the original state space to solve a related but simpler problem:
finding the subsets of the domain ``reachable-within-the-budget''.

This manuscript is an extended version of the paper accepted for publication by SIAM J. on Control and Optimization.
In the journal version, Section 3 and the Appendix were omitted due to space limitations.
\fi
\end{abstract}

\section{Introduction.}
\label{s:intro}



Dynamic programming provides a convenient framework
for finding provably ``optimal'' strategies to control
both discrete and continuous systems.
The optimality is usually defined with respect to
a single criterion or cost (e.g., money, fuel, or time
needed to implement each particular control).
Given a set of possible system configurations $\domain$,
the {\em value function} is defined as the cost of such
optimal control for each starting position $\x \in \domain$,
and the
Dynamic Programming 
equations are then solved to recover
this value function.

Realistic applications usually involve several different
criteria for evaluating control strategies and the notion
of ``optimality'' becomes more complicated.
One natural approach is to focus on a single ``primary'' cost
to be optimized, while treating all other (``secondary'')
costs as constraints.
A typical application of this type is to find the fastest path
to the target subject to
constraints on the maximum use of energy along that path.
Another possible constraint is on the maximum
total amount of time that a robot can be visible to
an unfriendly observer while moving to a target.
Kumar and Vladimirsky have recently introduced
efficient numerical methods for these and similar
constrained-optimal control problems
in continuous domains \cite{KumarVlad}.
We will refer to such problems
as ``budget-constrained'' since
the original state space will have to be expanded to keep track of
the remaining ``resource-budget'' to satisfy the constraint.
(E.g., two different starting energy-budgets
may well result in very different ``constrained time-optimal''
paths, even when the starting physical position is the same.)

This state space expansion leads to a significantly larger
system of augmented dynamic programming equations or a higher-dimensional
domain in the continuous case.
Thus, the computational efficiency of numerical methods
becomes particularly important.
In many applications it is natural to assume that
the resource budgets are non-renewable and
that every change in the system state results
in an immediate budget decrease.
This introduces a natural ``causal ordering'' on
the expanded state space:
the knowledge of energy-constrained time-optimal
controls for the starting energy-budget $\bb_1$
can be used
in computing such constrained-optimal
controls for a higher budget $\bb_2 > \bb_1$.
The efficiency of methods introduced in
\cite{KumarVlad} hinges on this explicit causality.

In the current paper we focus on a more general situation,
where the budget can be {\em instantaneously reset}
to its maximum value by visiting a special part of
the state space $\safe \subset \domain$ and the budget
is decreasing when moving through the rest of the state
space $\nsafe = \domain \backslash \safe.$
If the limited resource is fuel, $\safe$ can be thought
of as a discrete set of gas stations.  On the other hand,
if the secondary cost is the vehicle's exposure
to an unfriendly observer, the $\safe$ can be interpreted
as a ``safe'' part of the domain protected from observation,
and the constraint is on the maximum {\em contiguous} period of
time that the system is allowed to spend in an
``unsafe'' (observable) set $\nsafe$.
To the best of our knowledge, our formulation of
continuous versions of such problems is new and
no efficient methods for these were previously available.
We show that such ``budget-resets'' result in a much more subtle
implicit causality in the expanded state space.
Nevertheless, under mild technical assumptions,
non-iterative methods are available for deterministic
(section \ref{s:deterministic_SP_reset})
and even for certain stochastic
\iffullversion
(section \ref{s:SSP})
\fi
budget-reset problems on graphs.
\iffullversion
\else
Methods for the latter are discussed in an extended version of this manuscript \cite{Unsafe_full_version}.
\fi
In the continuous case, this problem is described by
a controlled hybrid system (section \ref{s:continuous_reset}),
whose value function is a discontinuous viscosity solution of two
different Hamilton-Jacobi PDEs posed on $\safe$
and $\nsafe \times \B$, where $\B$ is the set of possible budget
levels available to satisfy secondary constraints.
The characteristics of these PDEs
coincide with the constrained-optimal trajectories and define
the direction of information flow in the continuous state space.
Unfortunately, the most natural semi-Lagrangian discretization of
these PDEs is no longer causal, making iterative numerical methods
unavoidable (section \ref{ss:Numerics}).
The key contributions of this paper are the
study of properties of the value functions for budget reset problems
(section \ref{ss:br_pde})
and the fast iterative algorithm for approximating such value functions
(sections \ref{ss:iterative_brp}-\ref{ss:mfl}).
Our general approach is equally efficient even in
the presence of strong inhomogeneities
and anisotropies
in costs and/or dynamics of the system.
In section \ref{sec:numresults}, we provide numerical evidence
of the method's convergence and illustrate the key properties
on several optimal control examples\footnote{
Matlab/C++ codes used to produce all experimental results in this paper
are 
available from \mbox{\url{http://www.math.cornell.edu/~vlad/papers/b_reset/}}}, including
prolonged-visibility-avoidance problems.

We note that for budget-reset problems, finding constrained-optimal controls
is significantly harder than just identifying the ``reachable''
subset of $\domain$; i.e., the set of states from which it's possible to
steer the system to the target without violating the constraints, provided
one starts with the maximum budget.  A more efficient algorithm for the
latter problem
\iffullversion
is presented in
Appendix \ref{s:Reachable}.
\else
is included in \cite{Unsafe_full_version}.
\fi
We conclude by discussing the limitations of our approach and the
future work in section \ref{s:conclusions}.

\section{Deterministic SP on graphs with renewable resources.}
\label{s:deterministic_SP_reset}

The classical problem of finding the shortest path (SP) in a directed graph with
non-negative arclengths is among those most exhaustively studied.
Fast iterative (``label-correcting'') and fast non-iterative (``label-setting'')
methods for it are widely used in a variety of applications.

Consider a directed graph on a set of nodes
$X = \{ \x_1, \ldots, \x_M, \x_{M+1}=\T \}$, where the last node $\T$ is the target.
For each node $\x_i \in X$, there is a set of immediate neighbors
$N_i = N(\x_i) \subset X \backslash x_i$ and for each neighbor $\x_j \in N_i$
there is a known transition cost $C_{ij} = C(\x_i,\x_j) \geq 0$.
For convenience, we will take $C_{ij} = +\infty$ if $\x_j \not \in N_i.$
We will further assume that our graph is relatively sparse; i.e.,
$\max_i |N_i| \leq \kappa \ll M.$
If $\y = (\y_0=\x_i, \y_1, ..., \y_r = \T)$ is a path from $\x_i$ to the target,
its total cost is defined as $\J(\y) = \sum_{k=0}^{r-1} C(\y_k,\y_{k+1}).$
The value function $U_i = U(\x_i)$ is defined as the cost along an optimal path.
Clearly $U_{\T} = 0$ and for all other nodes the Bellman optimality principle yields
a system of $M$ non-linear coupled equations:
\begin{equation}
\label{eq:primary}
U_i = \min\limits_{\x_j \in N_i} \left\{ C_{ij} + U_j \right\};
\qquad i \in \I = \{1, \ldots, M\}.
\end{equation}
(This definition makes $U_i = +\infty$, whenever there is no path from $\x_i$
to $\T$, including the cases where $N_i = \emptyset$.  Throughout this paper, we will
use the convention that the minimum over an empty set is $+\infty$.)
Dijkstra's classical method \cite{Diks} can be used to solve the system
\eqref{eq:primary} non-iteratively
in $O(M \log M)$ operations.  A detailed discussion of this and other label-setting and
label-correcting algorithms can be found in standard references; e.g. \cite{Ahuja, Bertsekas_NObook}.

In many applications, a natural extension of this problem requires keeping track of
several different types of cost (e.g., money, fuel, time) associated with each transition.
The goal is then to either find all Pareto optimal paths or to treat one criterion/cost
as primary (to be minimized) and all others as secondary, providing the constraints
to restrict the set of allowable paths.  (E.g., what is {\em the quickest} path given
the current amount of fuel in our tank?)
A number of algorithms for such multi-objective dynamic programming
are also well-known \cite{Hansen, Martins, Jaffe}.

One natural approach to bicriterion problems
involves finding a simple, single-criterion
optimal path but in an expanded graph with the node set $\widehat{X} = X \times \B$.
We begin with a similar technique adopted to our ``constrained resources'' scenario.
Our description emphasizes the causal properties of the model, also reflected by
the numerical methods for the continuous case in section \ref{ss:Numerics}.
We assume the secondary resource-cost for each transition is specified as
$c_{ij} = c(\x_i, \x_j)$,
whereas the primary cost of that transition will be still denoted as $C_{ij}$.
For simplicity we assume that the secondary
costs are conveniently quantized; e.g., $c_{ij} \in \mathbb{Z}$.
We will use $B>0$ to denote the maximal allowable budget and
$\B = \{0, \ldots, B\}$ to denote the set of allowable budget levels.
In the extended graph, a node $\x_i^b \in \widehat{X}$ corresponds to
a location $\x_i \in X$ with the resource budget $b$.
The use of resources occurs when $c_{ij}>0$,
and the renewal/recharge of resources corresponds to $c_{ij}<0$.
If we are starting from $\x_i$ with a secondary-budget $b$ and decide
to move to $\x_j \in N_i$, in the expanded graph this becomes a transition
from $\x_i^b$ to $\x_j^{b-c_{ij}}$.

We now define the value function
$W_i^b = W(\x_i^b)$ as the
minimum accumulated primary-cost from $\x_i$ to $\T$, but minimizing only among the paths
along which the budget always remains in $\B$:
$$
W_i^b = \min\limits_{\y \in Y^b(\x_i)} \J(\y),
$$
where $Y^b(\x_i)$ is the set of ``$b$-feasible paths''
(i.e., those that can be traversed if starting from $\x_i$ with resource budget $b$).

Feasible paths should clearly include only those, along which
the resource budget remains non-negative.
But since we are allowing for secondary costs $c_{ij}$ of arbitrary sign,
this requires
a choice between two different ``upper budget bound'' interpretations:

1)  Any attempt to achieve a budget higher than $B$ makes a path
infeasible; i.e., since we are starting from $\x_i$ with budget
$b \in \B$,
$$
Y^b(\x_i) = \left\{\y = (\y_0=\x_i, \ldots, \y_r = \T) \, \mid \;
0 \, \leq \,
\left( b \, - \, \sum_{k=0}^{s-1} c(\y_k,\y_{k+1}) \right) \, \leq \, B, \; \Forall s \leq r \right\}.
$$
In this case, the optimality principle would yield a system of equations on the expanded graph:
$$
W_i^b = \min\limits_{\x_j \in N_i} \left\{ C_{ij} + W_j^{b-c_{ij}} \right\};
\qquad \qquad \Forall b \in \B; \qquad \Forall i \in \I;
$$
with the following ``boundary conditions'':
$$
W_{\T}^b = 0,  \; \Forall b \in \B; \qquad
W_i^b = +\infty, \; \Forall b \not \in \B, \Forall \x_i \in X.
$$
In practice, the latter condition can be omitted if we minimize over
$N_i^b = \{\x_j \in N_i \mid (b-c_{ij}) \in \B \}$ instead of $N_i$.

2) An interpretation more suitable for our purposes is to assume that
any resources in excess of $B$ are simply not accumulated (or are immediately lost),
but the path remains allowable.  If we define
a left-associative operation
$\alpha \ominus \beta = \min(\alpha-\beta, B)$, then
$$
Y^b(\x_i) = \left\{\y = (\y_0=\x_i, \ldots, \y_r = \T) \, \mid \;
\left( b \, \ominus c(\y_0,\y_1) \ominus \ldots \ominus c(\y_{s-1},\y_{s}) \right)
\, \geq \, 0,
\; \Forall s \leq r \right\},
$$
and the optimality principle on the expanded graph can be expressed as follows:
\begin{equation}
\label{DP:extended}
W_i^b = \min\limits_{\x_j \in N_i^b} \left\{ C_{ij} + W_j^{b \ominus c_{ij}} \right\};
\qquad \qquad \Forall b \in \B; \qquad \Forall i \in \I,
\end{equation}
with $W_{\T}^b = 0,  \Forall b \in \B$ and
$N_i^b = \{\x_j \in N_i \mid c_{ij} \leq b \}.$
Unlike the previous interpretation, this definition ensures that
the value function is
non-increasing in $b$
(since the set of feasible paths is non-decreasing as
we increase the budget).  Thus, we will also similarly
interpret the ``upper budget bound'' in
\iffullversion
sections \ref{s:SSP} and
\else
section
\fi
\ref{s:continuous_reset}.


\begin{remark} \label{rem:types_of_causality}
It is natural to view the expanded graph as $(B+1)$ ``$b$-slices''
(i.e., copies of the original graph) stacked vertically,
with the transitions between slices corresponding to $c_{ij}$'s.
(Though, since the total costs of feasible paths
do not depend on the budget remaining upon reaching the target,
it is in fact unnecessary to have multiple copies of $\T$ in
the expanded graph; see Figure \ref{ex2}).
We note that the signs of the secondary costs
strongly influence the type of causality present
in the system as well as the efficiency of the corresponding numerical methods.
We consider three cases:\\

1. Since the primary costs are non-negative, the system \eqref{DP:extended} can always be
solved by Dijkstra's method on an expanded graph
regardless of the signs of $c_{ij}$'s.
The computational cost of this approach is $O(\hat{M} \log \hat{M})$, where
$\hat{M} = M \times (B+1)$ is the number of nodes in the expanded graph
(not counting the target).
We will refer to this most general type of causality as {\em implicit}.

2. However, if $c_{ij}$'s are strictly positive, Dijkstra's method
becomes unnecessary since the budget will strictly decrease along every path.
In this case $W_i^0 = +\infty$ for all $\x_i \in X \backslash \{\T\}$
and each $W_i^b$ depends only on nodes in the lower slices.
We will refer to this situation as {\em explicitly causal} since we can
compute the value function in $O(\hat{M})$ operations, in a single sweep
from the bottom to the top slice.

3. On the other hand, if the secondary costs are only non-negative,
the budget is non-increasing along
every path and we can use Dijkstra's method on one $b$-slice at a time
(again, starting from $b=0$ and advancing to $b=B$) instead of using it on
the entire expanded graph at once.
This {\em semi-implicit causality} results in the computational cost
of $O(\hat{M} \log M).$
\end{remark}

\begin{remark} \label{rem:partial_solutions}
The sign of secondary costs also determines whether the value function
$W_i^b$ actually depends on the maximum allowable resource level.
E.g., suppose the solution
$W_i^b$ was already computed
for all $b \in \B= \{0, \ldots, B\}$
and it is now necessary to solve the problem again, but for a wider range
of resource budgets $\tilde{\B} = \{0,\ldots, \tilde{B}\}$,
where $\tilde{B} > B$.
Assuming that $\tilde{W}$ solves the latter problem,
a useful consequence of explicit and semi-implicit causality is the fact
that $\tilde{W}_i^b = W_i^b$ for all $b \in \B$; thus,
the computational costs of this extension are decreased
by concentrating on the set of budgets $\{B+1,\ldots, \tilde{B}\}$ only.
Unfortunately, this convenient property does not generally hold
for implicitly causal problems.  (E.g., when refueling is allowed,
two cars with different capacity of gas tanks might have
different optimal driving directions
even if they are starting out with the same amount of gas.)
\end{remark}

\subsection{Safe/Unsafe splitting without budget resets.}
\label{ss:DSP_no_reset}

The framework presented so far is suitable for fairly general resource
expenditure/accumulation models.
In this paper, we are primarily interested in a smaller class of problems,
where the state space is split into ``safe'' and ``unsafe'' components
(i.e., $X \backslash \{\T\} = \safe \cup \nsafe$) with the secondary cost not accrued
(i.e., the constrained resource not used at all)
while traveling through $\safe.$
A simple example of this is provided in Figure \ref{ex1}.

In the absence of ``budget resets'' this safe/unsafe splitting is modeled
by simply setting $c_{ij} = 0$ whenever $\x_i \in \safe,$
yielding semi-implicit causality; see Figures \ref{ex2} and \ref{ex3}.
To make this example intuitive, we have selected
the primary costs $C_{ij}$ to be such that the ``primary-optimal''
path (described by $U$; see Equation \eqref{eq:primary})
always proceeds from $\x_i$ to $\x_{i+1}$, etc.
However, the budget limitations can make such primary-optimal paths
infeasible. E.g., when $B=3$ and no resets are allowed,
the best feasible path from $\x_4$ is $(\x_4, \x_5, \x_8, \T)$,
resulting in $W_4^3 = 6 > 5 = U_4$; see Figure \ref{ex3}.
Still, all constrained-optimal paths travel either on the same
$b$ slice or downward,
resulting in a semi-implicit causality.


\tikzstyle{Upoint}=[draw,shape=circle,inner sep = 1mm,minimum size = .7cm]
\tikzstyle{Spoint}=[draw,shape=diamond,inner sep = 1mm,minimum size = .8cm]
\tikzstyle{Tpoint}=[draw,shape=rectangle,inner sep = 1mm,minimum size = .7cm]

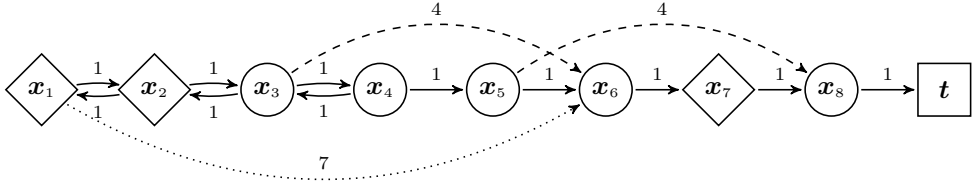
\begin{figure}[h!]
\centering
\begin{tikzpicture}[scale = .75,->,>=stealth',shorten >=1pt,shorten <=1pt,auto,node distance = 1.5cm,semithick]

    	\node[Spoint] (X1) at (0,0) {$\scriptstyle{\x_1}$};
    	\node[Spoint] (X2) at (2,0) {$\scriptstyle{\x_2}$};
    	\node[Upoint] (X3) at (4,0) {$\scriptstyle{\x_3}$};
    	\node[Upoint] (X4) at (6,0) {$\scriptstyle{\x_4}$};
    	\node[Upoint] (X5) at (8,0) {$\scriptstyle{\x_5}$};
    	\node[Upoint] (X6) at (10,0) {$\scriptstyle{\x_6}$};
    	\node[Spoint] (X7) at (12,0) {$\scriptstyle{\x_7}$};
    	\node[Upoint] (X8) at (14,0) {$\scriptstyle{\x_8}$};
    	\node[Tpoint] (T) at (16,0) {$\scriptstyle{\bm{t}}$};
    	
    	\path (X1) edge [bend left=8] node [above] {$\scriptstyle{1}$} (X2)
				   edge [bend right=30, dotted] node [above] {$\scriptstyle{7}$} (X6);
		\path (X2) edge [bend left=8] node [above] {$\scriptstyle{1}$} (X3)
				   edge [bend left=8] node [below] {$\scriptstyle{1}$} (X1);
		\path (X3) edge [bend left=8] node [above] {$\scriptstyle{1}$} (X4)
				   edge [bend left=8] node [below] {$\scriptstyle{1}$} (X2)
				   edge [bend left=35, dashed] node [above] {$\scriptstyle{4}$} (X6);
		\path (X4) edge node [above] {$\scriptstyle{1}$} (X5)
				   edge [bend left=8] node [below] {$\scriptstyle{1}$} (X3);
		\path (X5) edge node [above] {$\scriptstyle{1}$} (X6)
				   edge [bend left=35, dashed] node [above] {$\scriptstyle{4}$} (X8);
    	\path (X6) edge node [above] {$\scriptstyle{1}$} (X7);
    	\path (X7) edge node [above] {$\scriptstyle{1}$} (X8);
    	\path (X8) edge node [above] {$\scriptstyle{1}$} (T);
\end{tikzpicture}
\caption{Diamond-shaped nodes are in $\safe$ and
circle-shaped ones are in $\nsafe$.  The primary costs $C_{ij}$ are specified
for each link, while the secondary costs are $c_{ij} = 1$ if $\x_i \in \nsafe$
and $c_{ij} = 0$ if $\x_i \in \safe.$
The arrow types (solid, dashed, or dotted) also correspond
to different values of primary transition costs.
To build a concrete illustration, we will assume that $B=3$
is the maximum number of ``unsafe'' (circle-shaped) nodes we can go through.}
\label{ex1}
\end{figure}

\begin{figure}[hhhh]
\centering
\begin{tikzpicture}[scale = .75,->,>=stealth',shorten >=1pt,shorten <=1pt,auto,node distance = 1.5cm,semithick]

    	\node (X1) at (0,7.5) {$\x_1$};
    	\node (X2) at (2,7.5) {$\x_2$};
    	\node (X3) at (4,7.5) {$\x_3$};
    	\node (X4) at (6,7.5) {$\x_4$};
    	\node (X5) at (8,7.5) {$\x_5$};
    	\node (X6) at (10,7.5) {$\x_6$};
    	\node (X7) at (12,7.5) {$\x_7$};
    	\node (X8) at (14,7.5) {$\x_8$};

		\node (b3) at (-2,6) {$\bm{b=3}$};
    	\node[Spoint] (X13) at (0,6) {$\scriptstyle{\x_1^3}$};
    	\node[Spoint] (X23) at (2,6) {$\scriptstyle{\x_2^3}$};
    	\node[Upoint] (X33) at (4,6) {$\scriptstyle{\x_3^3}$};
    	\node[Upoint] (X43) at (6,6) {$\scriptstyle{\x_4^3}$};
    	\node[Upoint] (X53) at (8,6) {$\scriptstyle{\x_5^3}$};
    	\node[Upoint] (X63) at (10,6) {$\scriptstyle{\x_6^3}$};
    	\node[Spoint] (X73) at (12,6) {$\scriptstyle{\x_7^3}$};
    	\node[Upoint] (X83) at (14,6) {$\scriptstyle{\x_8^3}$};
	
		\node (b2) at (-2,4) {$\bm{b=2}$};
    	\node[Spoint] (X12) at (0,4) {$\scriptstyle{\x_1^2}$};
    	\node[Spoint] (X22) at (2,4) {$\scriptstyle{\x_2^2}$};
    	\node[Upoint] (X32) at (4,4) {$\scriptstyle{\x_3^2}$};
    	\node[Upoint] (X42) at (6,4) {$\scriptstyle{\x_4^2}$};
    	\node[Upoint] (X52) at (8,4) {$\scriptstyle{\x_5^2}$};
    	\node[Upoint] (X62) at (10,4) {$\scriptstyle{\x_6^2}$};
    	\node[Spoint] (X72) at (12,4) {$\scriptstyle{\x_7^2}$};
    	\node[Upoint] (X82) at (14,4) {$\scriptstyle{\x_8^2}$};
	
		\node (b1) at (-2,2) {$\bm{b=1}$};
    	\node[Spoint] (X11) at (0,2) {$\scriptstyle{\x_1^1}$};
    	\node[Spoint] (X21) at (2,2) {$\scriptstyle{\x_2^1}$};
    	\node[Upoint] (X31) at (4,2) {$\scriptstyle{\x_3^1}$};
    	\node[Upoint] (X41) at (6,2) {$\scriptstyle{\x_4^1}$};
    	\node[Upoint] (X51) at (8,2) {$\scriptstyle{\x_5^1}$};
    	\node[Upoint] (X61) at (10,2) {$\scriptstyle{\x_6^1}$};
    	\node[Spoint] (X71) at (12,2) {$\scriptstyle{\x_7^1}$};
    	\node[Upoint] (X81) at (14,2) {$\scriptstyle{\x_8^1}$};

		\node (b0) at (-2,0) {$\bm{b=0}$};
    	\node[Spoint] (X10) at (0,0) {$\scriptstyle{\x_1^0}$};
    	\node[Spoint] (X20) at (2,0) {$\scriptstyle{\x_2^0}$};
    	\node[Upoint] (X30) at (4,0) {$\scriptstyle{\x_3^0}$};
    	\node[Upoint] (X40) at (6,0) {$\scriptstyle{\x_4^0}$};
    	\node[Upoint] (X50) at (8,0) {$\scriptstyle{\x_5^0}$};
    	\node[Upoint] (X60) at (10,0) {$\scriptstyle{\x_6^0}$};
    	\node[Spoint] (X70) at (12,0) {$\scriptstyle{\x_7^0}$};
    	\node[Upoint] (X80) at (14,0) {$\scriptstyle{\x_8^0}$};

    	\node[Tpoint] (T) at (16, 0) {$\scriptstyle{\bm{t}}$};

    	\path (X13) edge [bend left=8] (X23)
				    edge [bend right=20, dotted] (X63);
    	\path (X12) edge [bend left=8] (X22)
				    edge [bend right=20, dotted] (X62);
    	\path (X11) edge [bend left=8] (X21)
				    edge [bend right=20, dotted] (X61);
    	\path (X10) edge [bend left=8] (X20)
				    edge [bend right=20, dotted] (X60);

    	\path (X23) edge [bend left=8] (X13);
    	\path (X22) edge [bend left=8] (X12);
    	\path (X21) edge [bend left=8] (X11);
    	\path (X20) edge [bend left=8] (X10);

    	\path (X23) edge (X33);
    	\path (X22) edge (X32);
    	\path (X21) edge (X31);
    	\path (X20) edge (X30);

    	\path (X33) edge (X22)
					edge (X42)
					edge [dashed] (X62);
    	\path (X32) edge (X21)
					edge (X41)
					edge [dashed] (X61);
    	\path (X31) edge (X20)
					edge (X40)
					edge [dashed] (X60);

    	\path (X43) edge (X32)
					edge (X52);
    	\path (X42) edge (X31)
					edge (X51);
    	\path (X41) edge (X30)
					edge (X50);

    	\path (X53) edge (X62)
					edge [dashed] (X82);
    	\path (X52) edge (X61)
					edge [dashed] (X81);
    	\path (X51) edge (X60)
					edge [dashed] (X80);

    	\path (X63) edge (X72);
    	\path (X62) edge (X71);
    	\path (X61) edge (X70);

    	\path (X73) edge (X83);
    	\path (X72) edge (X82);
    	\path (X71) edge (X81);
    	\path (X70) edge (X80);

    	\path (X83) edge (T);
    	\path (X82) edge (T);
    	\path (X81) edge (T);

\end{tikzpicture}
\caption{Budget-constrained shortest path (no ``resets'').
For every node except for the target, the superscript denotes
{\bf the remaining unsafe-node budget} at that node.
Transition on the same level from a safe node, down to the next level
if going from an unsafe one.  The primary transition costs are indicated
by the type of the arrow; see Figure \ref{ex1}.}
\label{ex2}
\end{figure}
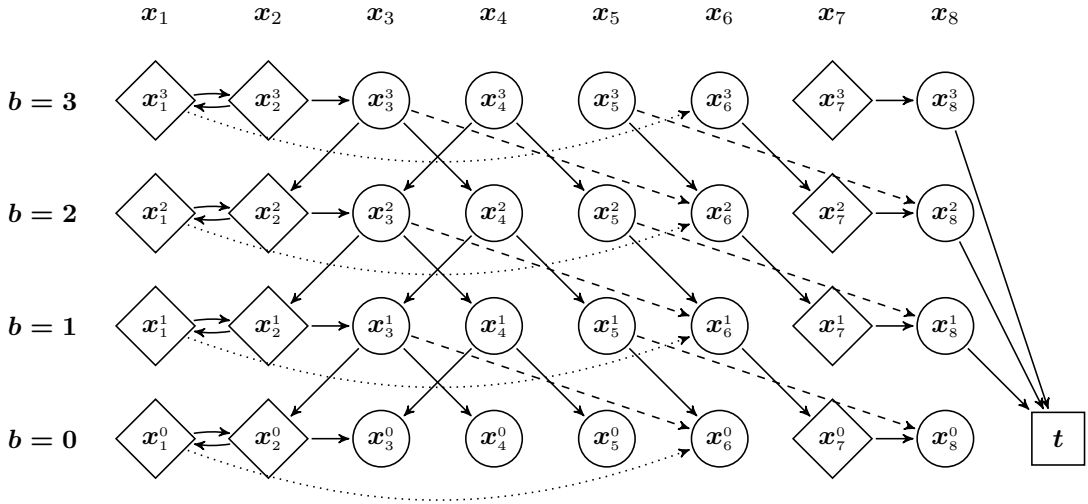

\begin{figure}[hhhh]
\centering
\begin{tikzpicture}[scale = .75,->,>=stealth',shorten >=1pt,shorten <=1pt,auto,node distance = 1.5cm,semithick]

    	\node (X1) at (0,7.5) {$\x_1$};
    	\node (X2) at (2,7.5) {$\x_2$};
    	\node (X3) at (4,7.5) {$\x_3$};
    	\node (X4) at (6,7.5) {$\x_4$};
    	\node (X5) at (8,7.5) {$\x_5$};
    	\node (X6) at (10,7.5) {$\x_6$};
    	\node (X7) at (12,7.5) {$\x_7$};
    	\node (X8) at (14,7.5) {$\x_8$};

		\node (b3) at (-2,6) {$\bm{b=3}$};
    	\node[Spoint] (X13) at (0,6) {$\scriptstyle{9}$};
    	\node[Spoint] (X23) at (2,6) {$\scriptstyle{8}$};
    	\node[Upoint] (X33) at (4,6) {$\scriptstyle{7}$};
    	\node[Upoint] (X43) at (6,6) {$\scriptstyle{6}$};
    	\node[Upoint] (X53) at (8,6) {$\scriptstyle{4}$};
    	\node[Upoint] (X63) at (10,6) {$\scriptstyle{3}$};
    	\node[Spoint] (X73) at (12,6) {$\scriptstyle{2}$};
    	\node[Upoint] (X83) at (14,6) {$\scriptstyle{1}$};
	
		\node (b2) at (-2,4) {$\bm{b=2}$};
    	\node[Spoint] (X12) at (0,4) {$\scriptstyle{10}$};
    	\node[Spoint] (X22) at (2,4) {$\scriptstyle{11}$};
    	\node[Upoint] (X32) at (4,4) {$\scriptstyle{\infty}$};
    	\node[Upoint] (X42) at (6,4) {$\scriptstyle{\infty}$};
    	\node[Upoint] (X52) at (8,4) {$\scriptstyle{5}$};
    	\node[Upoint] (X62) at (10,4) {$\scriptstyle{3}$};
    	\node[Spoint] (X72) at (12,4) {$\scriptstyle{2}$};
    	\node[Upoint] (X82) at (14,4) {$\scriptstyle{1}$};
	
		\node (b1) at (-2,2) {$\bm{b=1}$};
    	\node[Spoint] (X11) at (0,2) {$\scriptstyle{\infty}$};
    	\node[Spoint] (X21) at (2,2) {$\scriptstyle{\infty}$};
    	\node[Upoint] (X31) at (4,2) {$\scriptstyle{\infty}$};
    	\node[Upoint] (X41) at (6,2) {$\scriptstyle{\infty}$};
    	\node[Upoint] (X51) at (8,2) {$\scriptstyle{\infty}$};
    	\node[Upoint] (X61) at (10,2) {$\scriptstyle{\infty}$};
    	\node[Spoint] (X71) at (12,2) {$\scriptstyle{2}$};
    	\node[Upoint] (X81) at (14,2) {$\scriptstyle{1}$};

		\node (b0) at (-2,0) {$\bm{b=0}$};
    	\node[Spoint] (X10) at (0,0) {$\scriptstyle{\infty}$};
    	\node[Spoint] (X20) at (2,0) {$\scriptstyle{\infty}$};
    	\node[Upoint] (X30) at (4,0) {$\scriptstyle{\infty}$};
    	\node[Upoint] (X40) at (6,0) {$\scriptstyle{\infty}$};
    	\node[Upoint] (X50) at (8,0) {$\scriptstyle{\infty}$};
    	\node[Upoint] (X60) at (10,0) {$\scriptstyle{\infty}$};
    	\node[Spoint] (X70) at (12,0) {$\scriptstyle{\infty}$};
    	\node[Upoint] (X80) at (14,0) {$\scriptstyle{\infty}$};

    	\node[Tpoint] (T) at (16, 0) {$\scriptstyle{0}$};

    	\path (X13) edge [bend left=8] (X23)
				    edge [bend right=20, dotted] (X63);
    	\path (X12) edge [bend left=8] (X22)
				    edge [bend right=20, dotted] (X62);
    	\path (X11) edge [bend left=8] (X21)
				    edge [bend right=20, dotted] (X61);
    	\path (X10) edge [bend left=8] (X20)
				    edge [bend right=20, dotted] (X60);

    	\path (X23) edge [bend left=8] (X13);
    	\path (X22) edge [bend left=8] (X12);
    	\path (X21) edge [bend left=8] (X11);
    	\path (X20) edge [bend left=8] (X10);

    	\path (X23) edge (X33);
    	\path (X22) edge (X32);
    	\path (X21) edge (X31);
    	\path (X20) edge (X30);

    	\path (X33) edge (X22)
					edge (X42)
					edge [dashed] (X62);
    	\path (X32) edge (X21)
					edge (X41)
					edge [dashed] (X61);
    	\path (X31) edge (X20)
					edge (X40)
					edge [dashed] (X60);

    	\path (X43) edge (X32)
					edge (X52);
    	\path (X42) edge (X31)
					edge (X51);
    	\path (X41) edge (X30)
					edge (X50);

    	\path (X53) edge (X62)
					edge [dashed] (X82);
    	\path (X52) edge (X61)
					edge [dashed] (X81);
    	\path (X51) edge (X60)
					edge [dashed] (X80);

    	\path (X63) edge (X72);
    	\path (X62) edge (X71);
    	\path (X61) edge (X70);

    	\path (X73) edge (X83);
    	\path (X72) edge (X82);
    	\path (X71) edge (X81);
    	\path (X70) edge (X80);

    	\path (X83) edge (T);
    	\path (X82) edge (T);
    	\path (X81) edge (T);

\end{tikzpicture}
\caption{Semi-implicit causality: run Dijkstra's algorithm
{\bf slice-by-slice} on the expanded graph (from bottom to the top).
The number inside each node represents the minimum cost from  $\x_i^b$ to the target.
(If there are no ``safe cycles'', the causality becomes explicit
and there is  no need for Dijkstra's.)}
\label{ex3}
\end{figure}
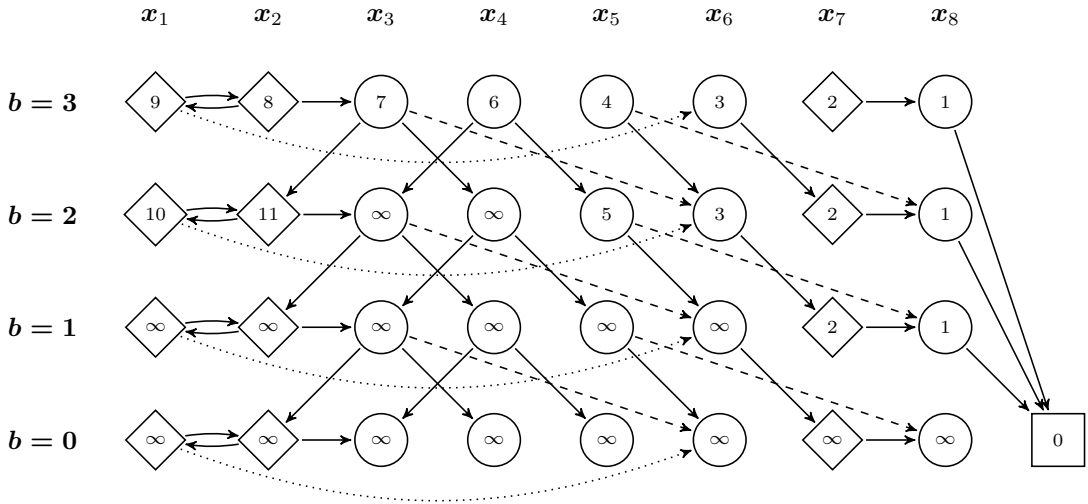

Since for large $B$ the expanded graph is significantly
bigger than the original one,
it is useful to consider ``fast'' techniques for pruning $\widehat{X}$.
We note that the $W_i^0 = \infty$ for all $\x_i \in \nsafe$ and so the
``0-budget'' copies of $\nsafe$ nodes can always be safely removed from
the extended graph.  A more careful argument can be used to reduce the
computational domain much further:

\iffullversion
\begin{remark}
\label{rem:DP_domain_restrict}
\fi
To begin, we describe the resource-optimal paths by defining another value function
$V_i = V(\x_i)$ to be the minimum resource budget $b$ needed to reach the target from
$\x_i$.
For the described model, $c_{ij} \geq 0$ and it is easy to show that
this new ``resource value function'' would have to satisfy
\begin{equation}
\label{eq:secondary}
V_i = \min\limits_{\x_j \in N_i} \left\{ c_{ij} + V_j \right\};
\qquad i \in \I = \{1, \ldots, M\}.
\end{equation}
We will further define $\tilde{V}_i$ as
the minimum resource budget sufficient to follow a primary-optimal
path from $\x_i$.  The equation for $\tilde{V}_i$ is similar to
\eqref{eq:secondary}, but with the minimum taken over the set
of optimal transitions in \eqref{eq:primary}
(instead of minimizing over the entire $N_i$).
Analogously, we can define $\tilde{U}_i$ to be
the primary cost along the resource-optimal trajectories.
Definitions of these four functions are summarized in the caption of
Table \ref{tab1}.  The left side of this table shows the values of
$U, \tilde{V}, V,$ and $\tilde{U}$ for the example presented in Figure \ref{ex1}.

We note that

1) $W_i^b = \tilde{U}_i$ for $b=V_i$.
We will refer to the set of such $\x_i^b$ nodes as the {\em Minimum Feasible Level}
since $W_i^{\beta} = \infty$ for any $\beta<V_i$.

2) $W_i^b = U_i$ for any $b \geq \tilde{V_i}$, since the primary-optimal
trajectories are feasible there.

These two observations can be used to strongly reduce the extended graph
on which we solve the system \eqref{DP:extended}.
E.g., in Figure \ref{ex3}, this pruning excludes all nodes except for
$\{\x_1^3, \x_2^3
\}$.
In examples where $V_i \ll B \ll \tilde{V_i}$ for most nodes,
this remaining subset of $\widehat{X}$ will generally be much larger,
but the pruning is still worthwhile since $U, \tilde{V}, V,$ and
$\tilde{U}$ can be computed on a much smaller (original) graph.
A similar procedure for the continuous case is described in section \ref{ss:mfl}.
\iffullversion
\end{remark}
\fi

\begin{table}[h]
$
\begin{array}{cc}
\text{Without budget resets.}
&
\text{With budget resets.}\\

\begin{tabular}{|c|c|c|c|c|c|c|c|c|}
\hline
\textbf{} & $\x_1$ & $\x_2$ & $\x_3$ & $\x_4$ & $\x_5$ & $\x_6$ & $\x_7$ & $\x_8$\\
\hline
$U$				&	8	&	7	&	6	&	5	&	4	&	3	&	2	&	1\\
$\tilde{V}$		&	5	&	5	&	5	&	4	&	3	&	2	&	1	&	1\\
$V$				&	2	&	2	&	3	&	3	&	2	&	2	&	1	&	1\\
$\tilde{U}$		&	10	&	11	&	7	&	6	&	5	&	3	&	2	&	1\\
\hline
\end{tabular}

&

\begin{tabular}{|c|c|c|c|c|c|c|c|c|}
\hline
\textbf{} & $\x_1$ & $\x_2$ & $\x_3$ & $\x_4$ & $\x_5$ & $\x_6$ & $\x_7$ & $\x_8$\\
\hline
$U$				&	8	&	7	&	6	&	5	&	4	&	3	&	2	&	1\\
$\tilde{V}$		&	-	&	-	&	4	&	3	&	2	&	1	&	-	&	1\\
$V$				&	-	&	-	&	1	&	2	&	2	&	1	&	-	&	1\\
$\tilde{U}$		&	-	&	-	&	9	&	10	&	4	&	3	&	-	&	1\\
\hline
\end{tabular}
\end{array}
$
\label{tab1}
\vspace*{2mm}
\caption{Various ``optimal'' costs for the example in Figure \ref{ex1} with $B=3$.
Primary-optimal cost $(U)$, resource-optimal cost $(V)$,
resource cost along primary-optimal path $(\tilde{V})$,
and primary cost along resource-optimal path $(\tilde{U})$ for
the no-reset and budget-reset problems.
Note that for the latter problem we can also define
$\tilde{V}(\x_3) = \infty$ since $B  = 3$.}

\end{table}

\FloatBarrier

\subsection{Safe/Unsafe splitting with budget resets.}
\label{ss:DSP_with_reset}

Our main focus, however, is on problems where the budget
is also fully reset upon entering $\safe.$
To model that, we can formally set $c_{ij} = -B < 0$,
ensuring the transition from $\x^b_i$ to
$\x_j^B$, whenever $\x_i \in \safe$.
Since we always have a maximum budget in $\safe$,
there is no need to maintain a copy of it in each $b$-slice.
This results in a smaller expanded graph with
$\left( |\safe| + B|\nsafe| + 1 \right)$ nodes;  see Figure \ref{ex5}.
Unfortunately, the negative secondary costs make this problem
implicitly causal, impacting the efficiency.
Moreover, unlike the no-resets case, the projections of
constrained-optimal trajectories onto the original graph
may contain loops.  E.g., starting from $\x_4$ with
the resource budget $b=2$, the constrained-optimal
trajectory is $(\x_4, \x_3, \x_2, \x_3, \x_6, \x_7, \x_8, \T)$
and the first two transitions are needed to restore the budget
to the level sufficient for the rest of the path. Note that,
if we were to re-solve the same problem with $B=4$, the
constrained-optimal path from the same starting location and budget
would be quite different:
$(\x_4, \x_3, \x_2, \x_3, \x_4, \x_5, \x_6, \x_7, \x_8, \T)$,
resulting in a smaller $W_4^2 = 9$; see Remark \ref{rem:partial_solutions}.

\begin{figure}[hhhh]
\centering
\begin{tikzpicture}[scale = .75,->,>=stealth',shorten >=1pt,shorten <=1pt,auto,node distance = 1.5cm,semithick]

    	\node (X1) at (0,7.5) {$\x_1$};
    	\node (X2) at (2,7.5) {$\x_2$};
    	\node (X3) at (4,7.5) {$\x_3$};
    	\node (X4) at (6,7.5) {$\x_4$};
    	\node (X5) at (8,7.5) {$\x_5$};
    	\node (X6) at (10,7.5) {$\x_6$};
    	\node (X7) at (12,7.5) {$\x_7$};
    	\node (X8) at (14,7.5) {$\x_8$};

		\node (b3) at (-2,6) {$\bm{b=3}$};
    	\node[Spoint] (X13) at (0,6) {$\scriptstyle{9}$};
    	\node[Spoint] (X23) at (2,6) {$\scriptstyle{8}$};
    	\node[Upoint] (X33) at (4,6) {$\scriptstyle{7}$};
    	\node[Upoint] (X43) at (6,6) {$\scriptstyle{\bm{5}}$};
    	\node[Upoint] (X53) at (8,6) {$\scriptstyle{4}$};
    	\node[Upoint] (X63) at (10,6) {$\scriptstyle{3}$};
    	\node[Spoint] (X73) at (12,6) {$\scriptstyle{2}$};
    	\node[Upoint] (X83) at (14,6) {$\scriptstyle{1}$};
	
		\node (b2) at (-2,4) {$\bm{b=2}$};
    	\node[Upoint] (X32) at (4,4) {$\scriptstyle{\bm{7}}$};
    	\node[Upoint] (X42) at (6,4) {$\scriptstyle{\bm{10}}$};
    	\node[Upoint] (X52) at (8,4) {$\scriptstyle{\bm{4}}$};
    	\node[Upoint] (X62) at (10,4) {$\scriptstyle{3}$};
    	\node[Upoint] (X82) at (14,4) {$\scriptstyle{1}$};
	
		\node (b1) at (-2,2) {$\bm{b=1}$};
    	\node[Upoint] (X31) at (4,2) {$\scriptstyle{\bm{9}}$};
    	\node[Upoint] (X41) at (6,2) {$\scriptstyle{\infty}$};
    	\node[Upoint] (X51) at (8,2) {$\scriptstyle{\infty}$};
    	\node[Upoint] (X61) at (10,2) {$\scriptstyle{\bm{3}}$};
    	\node[Upoint] (X81) at (14,2) {$\scriptstyle{1}$};

    	\node[Tpoint] (T) at (16, 2) {$\scriptstyle{0}$};

    	\path (X13) edge [bend left=8] (X23)
				    edge [bend right=20, dotted] (X63);

    	\path (X23) edge [bend left=8] (X13);

    	\path (X23) edge [bend left=8] (X33);

    	\path (X33) edge [bend left=8] (X23)
					edge (X42)
					edge [dashed] (X62);
    	\path (X32) edge (X23)
					edge (X41)
					edge [dashed] (X61);
    	\path (X31) edge (X23);

    	\path (X43) edge (X32)
					edge (X52);
    	\path (X42) edge (X31)
					edge (X51);

    	\path (X53) edge (X62)
					edge [dashed] (X82);
    	\path (X52) edge (X61)
					edge [dashed] (X81);

    	\path (X63) edge (X73);
    	\path (X62) edge (X73);
    	\path (X61) edge (X73);

    	\path (X73) edge (X83);

    	\path (X83) edge (T);
    	\path (X82) edge (T);
    	\path (X81) edge (T);

\end{tikzpicture}
\caption{The problem with budget resets for $B=3$.
(The $b=0$ slice is omitted simply because $W_i^0 = \infty$ for any $\x_i \in \nsafe$.)
Implicit/monotone causality: run Dijkstra's algorithm {\bf on the entire expanded graph}.
The number inside each node represents the minimum cost from $\x_i^b$ to the target.
We show in bold font the values different from the ``no resets'' case.}
\label{ex5}
\end{figure}
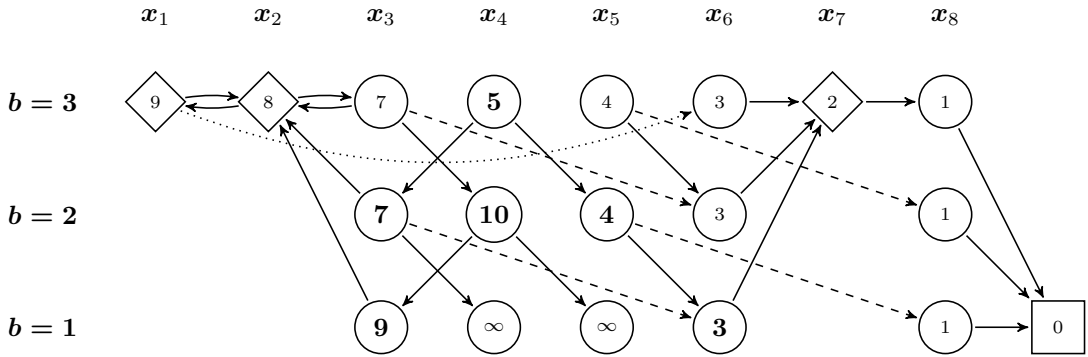

Unfortunately, the domain restriction/pruning techniques outlined in
\iffullversion
Remark \ref{rem:DP_domain_restrict}
\else
the previous subsection
\fi
are no longer directly applicable.
For the current example, the values of functions $U$, $\tilde{V}$, $V$, and $\tilde{U}$
are provided in the right half of Table \ref{tab1},
but their interpretations (and algorithms needed to compute them) are more subtle.
E.g., the function $V_i$ is naturally interpreted as the minimum starting
budget sufficient to reach $\T$ starting from $\x_i$.
Thus, $V_i$ is not defined on ``safe'' nodes (since on $\safe$ the budget is always equal to $B$),
and for $\x_i \in \nsafe$ the value of $V_i$ is $B$-dependent.
If $\bar{b} = V_i$, then $Y^{\bar{b}}(\x_i) \neq \emptyset$
(i.e., there exists a $\bar{b}$-feasible path from $\x_i$),
but
$Y^b(\x_i) = \emptyset$ for all $b < \bar{b}$.
If $\y = (\y_0=\x_i, \ldots, \y_r = \T) \in Y^{\bar{b}}(\x_i)$ and
$s$ is the smallest index such that $\y_s
\in \safe$, then
$\bar{b} = \sum_{k=0}^{s-1} c(\y_k,\y_{k+1})$.
As a result, we could also interpret $V_i$ as the minimum ``resource cost''
of traveling from $\x_i$ through $\nsafe$ to reach
$\hat{\safe} = \{ \x_j \in \safe \mid W_j^B < \infty \}.$
Unfortunately, this definition does not allow for efficient computations
on the original graph, since $\hat{\safe}$ is not known a priori.

If the values $W_j^B$ were already known for
all the ``Safe'' nodes $\x_j \in \safe,$ we could efficiently compute
$V_i$ for all $\x_i \in \nsafe$ (thus restricting the computational domain);
moreover, all $W_i^b$ could be then computed in a single upward sweep
(from the slice $b=1$ to the slice $b=B$).
This observation serves as a basis for
the iterative method summarized in Algorithm \ref{alg:DSP_iter}.
Intermediate stages of this algorithm applied to the above example
are depicted in Figure \ref{ex5_iterative}.

\begin{algorithm}
\textbf{Initialization:}\\
\BlankLine
$W_{\T} := 0;$\\
$W_j^B := \infty, \qquad \Forall \x_j \in \safe;$\\
$W_i^b := \infty, \qquad \Forall \x_i \in \nsafe, \, b \in \B.$\\
\BlankLine
Compute $U_i$ for all $\x_i \in X$.\\	

\vspace{0.4cm}

\textbf{Main Loop:}\\
\BlankLine
\Repeat{all $W_j^B$'s stop changing}{
\BlankLine
\textbf{Phase I:}\\
	Using the current $W_j^B$ values to specify $\hat{\safe}$,
	compute $V_i$ and $\tilde{U}_i$ for each $\x_i \in \nsafe$.\\

	\BlankLine
	\BlankLine

	\ForEach{$b = 1, \ldots, B$} {

		\ForEach{$\x_i \in \nsafe$} {
			
			\BlankLine
			\If{$b \geq V_i$} {
				\BlankLine
				\eIf{$b = V_i$} {
					\BlankLine
					$W_i^b := \tilde{U}_i$;
					\BlankLine
				}{
					\eIf{$W_i^{b-1} = U_i$} {
						\BlankLine
						$W_i^b := U_i$;
						\BlankLine
					}{
						\BlankLine
						Compute $W_i^b$ from equation \ref{DP:extended}.\\
						\BlankLine
					}
				}
			}
			\BlankLine

		}
	}

	\BlankLine
	\BlankLine
	\BlankLine
	\textbf{Phase II:}\\
	Run Dijkstra's method {\bf on $\safe$ only} to (re)-compute $W_j^B$'s
	for all $\x_j \in \safe$.\\
	\BlankLine

}
\BlankLine
\BlankLine
\caption{Iterative algorithm for the DSP problem with budget resets.}
\label{alg:DSP_iter}
\end{algorithm}

Since the primary costs $C_{ij}$'s are positive, the value function can still be
computed non-iteratively  -- by using Dijkstra's method on the full expanded graph
without attempting any domain restriction techniques.
It is possible to find examples, on which the advantages of restricting the domain
outweigh the non-iterative nature of Dijkstra's method.
But our main reason for describing the iterative Algorithm \ref{alg:DSP_iter}
is its applicability to the problems of
\iffullversion
sections \ref{s:SSP} and
\else
section
\fi
\ref{s:continuous_reset}, where
Dijkstra-like methods are generally inapplicable.

\begin{figure}[hhhh]
\centering
\hrule
\vspace*{3mm}
\raggedright {\bf Initialize:}
$ \qquad W_1^3 = W_2^3 = W_7^3 = \infty.$
\vspace*{3mm}
\hrule
\vspace*{3mm}

\raggedright {\bf Fix ``Safe'' and solve on ``Unsafe'':}
\vspace*{3mm}

\begin{tikzpicture}[scale = .75,->,>=stealth',shorten >=1pt,shorten <=1pt,auto,node distance = 1.5cm,semithick]

    	\node (X1) at (0,7.5) {$\x_1$};
    	\node (X2) at (2,7.5) {$\x_2$};
    	\node (X3) at (4,7.5) {$\x_3$};
    	\node (X4) at (6,7.5) {$\x_4$};
    	\node (X5) at (8,7.5) {$\x_5$};
    	\node (X6) at (10,7.5) {$\x_6$};
    	\node (X7) at (12,7.5) {$\x_7$};
    	\node (X8) at (14,7.5) {$\x_8$};

		\node (b3) at (-2,6) {$\bm{b=3}$};
    	\node[Spoint] (X13) at (0,6) {$\scriptstyle{\infty}$};
    	\node[Spoint] (X23) at (2,6) {$\scriptstyle{\infty}$};
    	\node[Upoint] (X33) at (4,6) {$\scriptstyle{\infty}$};
    	\node[Upoint] (X43) at (6,6) {$\scriptstyle{6}$};
    	\node[Upoint] (X53) at (8,6) {$\scriptstyle{5}$};
    	\node[Upoint] (X63) at (10,6) {$\scriptstyle{\infty}$};
    	\node[Spoint] (X73) at (12,6) {$\scriptstyle{\infty}$};
    	\node[Upoint] (X83) at (14,6) {$\scriptstyle{1}$};
	
		\node (b2) at (-2,4) {$\bm{b=2}$};
    	\node[Upoint] (X32) at (4,4) {$\scriptstyle{\infty}$};
    	\node[Upoint] (X42) at (6,4) {$\scriptstyle{\infty}$};
    	\node[Upoint] (X52) at (8,4) {$\scriptstyle{5}$};
    	\node[Upoint] (X62) at (10,4) {$\scriptstyle{\infty}$};
    	\node[Upoint] (X82) at (14,4) {$\scriptstyle{1}$};
	
		\node (b1) at (-2,2) {$\bm{b=1}$};
    	\node[Upoint] (X31) at (4,2) {$\scriptstyle{\infty}$};
    	\node[Upoint] (X41) at (6,2) {$\scriptstyle{\infty}$};
    	\node[Upoint] (X51) at (8,2) {$\scriptstyle{\infty}$};
    	\node[Upoint] (X61) at (10,2) {$\scriptstyle{\infty}$};
    	\node[Upoint] (X81) at (14,2) {$\scriptstyle{1}$};

    	\node[Tpoint] (T) at (16, 2) {$\scriptstyle{0}$};

    	\path (X13) edge [bend left=8] (X23)
				    edge [bend right=20, dotted] (X63);

    	\path (X23) edge [bend left=8] (X13);

    	\path (X23) edge [bend left=8] (X33);

    	\path (X33) edge [bend left=8] (X23)
					edge (X42)
					edge [dashed] (X62);
    	\path (X32) edge (X23)
					edge (X41)
					edge [dashed] (X61);
    	\path (X31) edge (X23);

    	\path (X43) edge (X32)
					edge (X52);
    	\path (X42) edge (X31)
					edge (X51);

    	\path (X53) edge (X62)
					edge [dashed] (X82);
    	\path (X52) edge (X61)
					edge [dashed] (X81);

    	\path (X63) edge (X73);
    	\path (X62) edge (X73);
    	\path (X61) edge (X73);

    	\path (X73) edge (X83);

    	\path (X83) edge (T);
    	\path (X82) edge (T);
    	\path (X81) edge (T);

\end{tikzpicture}
\vspace*{3mm}
\hrule
\vspace*{3mm}
\raggedright {\bf Fix ``Unsafe'' and solve on ``Safe'':}  $\qquad  W_7^3 = 2.$
\vspace*{3mm}
\hrule
\vspace*{3mm}

\raggedright {\bf Fix ``Safe'' and solve on ``Unsafe'':}
\vspace*{3mm}

\begin{tikzpicture}[scale = .75,->,>=stealth',shorten >=1pt,shorten <=1pt,auto,node distance = 1.5cm,semithick]

    	\node (X1) at (0,7.5) {$\x_1$};
    	\node (X2) at (2,7.5) {$\x_2$};
    	\node (X3) at (4,7.5) {$\x_3$};
    	\node (X4) at (6,7.5) {$\x_4$};
    	\node (X5) at (8,7.5) {$\x_5$};
    	\node (X6) at (10,7.5) {$\x_6$};
    	\node (X7) at (12,7.5) {$\x_7$};
    	\node (X8) at (14,7.5) {$\x_8$};

		\node (b3) at (-2,6) {$\bm{b=3}$};
    	\node[Spoint] (X13) at (0,6) {$\scriptstyle{\infty}$};
    	\node[Spoint] (X23) at (2,6) {$\scriptstyle{\infty}$};
    	\node[Upoint] (X33) at (4,6) {$\scriptstyle{7}$};
    	\node[Upoint] (X43) at (6,6) {$\scriptstyle{5}$};
    	\node[Upoint] (X53) at (8,6) {$\scriptstyle{4}$};
    	\node[Upoint] (X63) at (10,6) {$\scriptstyle{3}$};
    	\node[Spoint] (X73) at (12,6) {$\scriptstyle{2}$};
    	\node[Upoint] (X83) at (14,6) {$\scriptstyle{1}$};
	
		\node (b2) at (-2,4) {$\bm{b=2}$};
    	\node[Upoint] (X32) at (4,4) {$\scriptstyle{7}$};
    	\node[Upoint] (X42) at (6,4) {$\scriptstyle{\infty}$};
    	\node[Upoint] (X52) at (8,4) {$\scriptstyle{4}$};
    	\node[Upoint] (X62) at (10,4) {$\scriptstyle{3}$};
    	\node[Upoint] (X82) at (14,4) {$\scriptstyle{1}$};
	
		\node (b1) at (-2,2) {$\bm{b=1}$};
    	\node[Upoint] (X31) at (4,2) {$\scriptstyle{\infty}$};
    	\node[Upoint] (X41) at (6,2) {$\scriptstyle{\infty}$};
    	\node[Upoint] (X51) at (8,2) {$\scriptstyle{\infty}$};
    	\node[Upoint] (X61) at (10,2) {$\scriptstyle{3}$};
    	\node[Upoint] (X81) at (14,2) {$\scriptstyle{1}$};

    	\node[Tpoint] (T) at (16, 2) {$\scriptstyle{0}$};

    	\path (X13) edge [bend left=8] (X23)
				    edge [bend right=20, dotted] (X63);

    	\path (X23) edge [bend left=8] (X13);

    	\path (X23) edge [bend left=8] (X33);

    	\path (X33) edge [bend left=8] (X23)
					edge (X42)
					edge [dashed] (X62);
    	\path (X32) edge (X23)
					edge (X41)
					edge [dashed] (X61);
    	\path (X31) edge (X23);

    	\path (X43) edge (X32)
					edge (X52);
    	\path (X42) edge (X31)
					edge (X51);

    	\path (X53) edge (X62)
					edge [dashed] (X82);
    	\path (X52) edge (X61)
					edge [dashed] (X81);

    	\path (X63) edge (X73);
    	\path (X62) edge (X73);
    	\path (X61) edge (X73);

    	\path (X73) edge (X83);

    	\path (X83) edge (T);
    	\path (X82) edge (T);
    	\path (X81) edge (T);

\end{tikzpicture}
\vspace*{3mm}
\hrule
\vspace*{3mm}
\raggedright {\bf Fix ``Unsafe'' and solve on ``Safe'':}  $\qquad W_1^3 = 9; \quad W_2^3 = 8.$
\vspace*{3mm}
\hrule
\vspace*{3mm}

\raggedright {\bf Fix ``Safe'' and solve on ``Unsafe'':}
\vspace*{3mm}

\begin{tikzpicture}[scale = .75,->,>=stealth',shorten >=1pt,shorten <=1pt,auto,node distance = 1.5cm,semithick]

    	\node (X1) at (0,7.5) {$\x_1$};
    	\node (X2) at (2,7.5) {$\x_2$};
    	\node (X3) at (4,7.5) {$\x_3$};
    	\node (X4) at (6,7.5) {$\x_4$};
    	\node (X5) at (8,7.5) {$\x_5$};
    	\node (X6) at (10,7.5) {$\x_6$};
    	\node (X7) at (12,7.5) {$\x_7$};
    	\node (X8) at (14,7.5) {$\x_8$};

		\node (b3) at (-2,6) {$\bm{b=3}$};
    	\node[Spoint] (X13) at (0,6) {$\scriptstyle{9}$};
    	\node[Spoint] (X23) at (2,6) {$\scriptstyle{8}$};
    	\node[Upoint] (X33) at (4,6) {$\scriptstyle{7}$};
    	\node[Upoint] (X43) at (6,6) {$\scriptstyle{5}$};
    	\node[Upoint] (X53) at (8,6) {$\scriptstyle{4}$};
    	\node[Upoint] (X63) at (10,6) {$\scriptstyle{3}$};
    	\node[Spoint] (X73) at (12,6) {$\scriptstyle{2}$};
    	\node[Upoint] (X83) at (14,6) {$\scriptstyle{1}$};
	
		\node (b2) at (-2,4) {$\bm{b=2}$};
    	\node[Upoint] (X32) at (4,4) {$\scriptstyle{7}$};
    	\node[Upoint] (X42) at (6,4) {$\scriptstyle{10}$};
    	\node[Upoint] (X52) at (8,4) {$\scriptstyle{4}$};
    	\node[Upoint] (X62) at (10,4) {$\scriptstyle{3}$};
    	\node[Upoint] (X82) at (14,4) {$\scriptstyle{1}$};
	
		\node (b1) at (-2,2) {$\bm{b=1}$};
    	\node[Upoint] (X31) at (4,2) {$\scriptstyle{9}$};
    	\node[Upoint] (X41) at (6,2) {$\scriptstyle{\infty}$};
    	\node[Upoint] (X51) at (8,2) {$\scriptstyle{\infty}$};
    	\node[Upoint] (X61) at (10,2) {$\scriptstyle{3}$};
    	\node[Upoint] (X81) at (14,2) {$\scriptstyle{1}$};

    	\node[Tpoint] (T) at (16, 2) {$\scriptstyle{0}$};

    	\path (X13) edge [bend left=8] (X23)
				    edge [bend right=20, dotted] (X63);

    	\path (X23) edge [bend left=8] (X13);

    	\path (X23) edge [bend left=8] (X33);

    	\path (X33) edge [bend left=8] (X23)
					edge (X42)
					edge [dashed] (X62);
    	\path (X32) edge (X23)
					edge (X41)
					edge [dashed] (X61);
    	\path (X31) edge (X23);

    	\path (X43) edge (X32)
					edge (X52);
    	\path (X42) edge (X31)
					edge (X51);

    	\path (X53) edge (X62)
					edge [dashed] (X82);
    	\path (X52) edge (X61)
					edge [dashed] (X81);

    	\path (X63) edge (X73);
    	\path (X62) edge (X73);
    	\path (X61) edge (X73);

    	\path (X73) edge (X83);

    	\path (X83) edge (T);
    	\path (X82) edge (T);
    	\path (X81) edge (T);

\end{tikzpicture}
\caption{The problem with budget resets for $B=3$ (specified in Figure \ref{ex5}) solved by an iterative Algorithm \ref{alg:DSP_iter}.
Note that the MFL for $\x_3$ and $\x_4$ is not correctly determined until the last iteration.}
\label{ex5_iterative}
\end{figure}
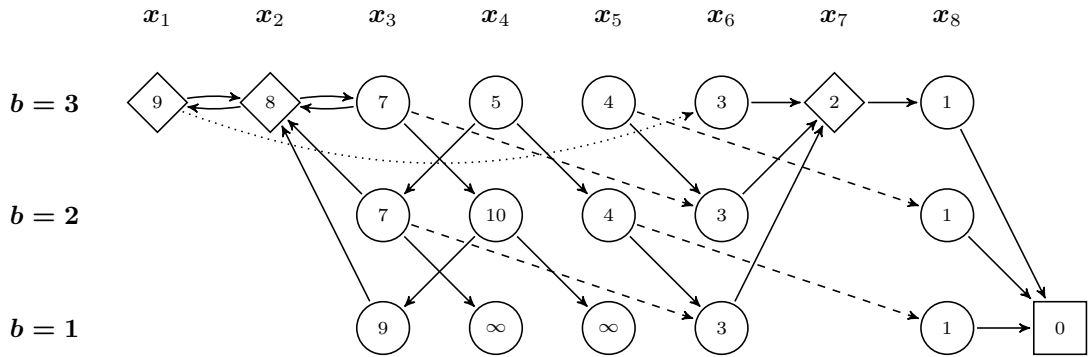


\iffullversion



\section{Stochastic SP on graphs with renewable resources.}
\label{s:SSP}

Stochastic Shortest Path (SSP) problems form an important subclass of
Markov Decision Processes (MDPs) with a range of applications including
manufacturing, resource allocation, and dynamic routing \cite{Bertsekas_DPbook}.
In addition, it is well known that SSPs can be used to obtain
semi-Lagrangian discretizations of continuous optimal control
problems \cite{KushnerDupuis, Tsitsiklis}.
The latter approach has been also used in \cite{VladMSSP} and
the connections will be highlighted in section \ref{ss:Numerics}.

We first briefly describe the usual (single-criterion) SSPs,
closely following the exposition in standard references; e.g., \cite{Bertsekas_DPbook}.
We consider a controlled stochastic process on a previously described
graph with nodes $X$.  A compact set of controlled values
$A_i = A(\x_i)$ is assumed known for every $\x_i \in X$.
A choice of the control $\ba \in \A_i$ determines our
next transition penalty $C_{i}(\ba) = C(\x_i, \ba)$ and
the probability distribution over the set of possible
successor nodes; i.e., $p_{ij}(\ba) = p(\x_i,\x_j,\ba)$ is the
probability of transition to $\x_j$ if the current node is $\x_i$
and we choose the control $\ba$.
Both $p$ and $C$ are usually assumed to be known and continuous in $\ba$.
We will further assume that $C(\x_i, \ba) \geq \Delta > 0.$
The target $\T$ is assumed to be {\em absorbing}; i.e.,
$p_{\T\T}(\ba)=1$ and $C_{\T}(\ba) = 0$ for all $\ba \in A_{\T}.$
The goal is to minimize the expected total cost from each $\x_i$ to $\T$.
More formally, a stationary policy can be defined as a mapping
$\mu: X \rightarrow \bigcup_i A_i$ satisfying $\mu(\x_i) \in A_i$ for all
$\x_i \in X$.  The expected cost of this policy starting from $\x_i$ is
$$
\J(\x_i, \mu) = E \left[
\sum\limits_{k=0}^{+\infty} C(\y_k, \mu(\y_k))
\right],
$$
where $\y = (\y_0 = \x_i, \y_1, \ldots)$ is a random path resulting
from applying $\mu$.  Under the above assumptions, it is easy to show
that there exits an optimal stationary policy $\mu^*$,
whose expected cost is used to define the value function:
$$
U_i = \min_{\mu} \J(\x_i, \mu) = \J(\x_i, \mu^*).
$$
\FloatBarrier
Moreover, the optimality principle yields a non-linear coupled
system of equations
\begin{equation}
\label{eq:SSP_general}
U_i = \min\limits_{\ba \in A_i}
\left\{
C(\x_i, \ba) + \sum\limits_{j=1}^{M+1} p_{ij}(\ba) U_j
\right\}, \qquad
\Forall i \in \I;
\end{equation}
with $U_{\T} = 0$.
The vector $U = (U_1, \ldots, U_M)$ can be viewed as a fixed point
of an operator $T:R^M \rightarrow R^M$ defined componentwise by the right hand side
of equation \eqref{eq:SSP_general}.  This operator is not a contraction,
but under the above assumptions one can show that $T$ will have a unique
fixed point, which can be recovered as a limit of
{\em value iterations} $U^{n+1} = T(U^n)$ regardless of the
initial guess $U^0 \in R^M$; see \cite{BertsTsi}.
However, for a general SSP, an infinite number of iterations may be needed
for convergence while non-iterative (label-setting type) methods generally
do not produce the correct solution to the system \eqref{eq:SSP_general}.
An important practical question is to determine a {\em causal} subclass of SSPs;
i.e., the SSPs on which faster label-setting algorithms can be used legitimately.
The first step in this direction was made by Bertsekas \cite{Bertsekas_DPbook},
who showed that Dijkstra-like methods apply provided there exists
a {\em consistently improving optimal policy}; i.e. an optimal stationary
policy $\mu^*$ such that $p_{ij}(\mu^*(\x_i)) > 0 \; \Rightarrow \; U_i > U_j.$
Unfortunately, this condition is implicit and generally cannot be verified until
the value function is already computed.  Explicit conditions on the cost function
$C$ that guarantee the existence of such a $\mu^*$ have been derived
in \cite{VladMSSP} for a subclass of Multimode SSP problems (MSSPs).
The latter include some of
the SSPs resulting from discretizations of Eikonal PDEs, a perspective first
studied by Tsitsiklis in \cite{Tsitsiklis}
and later extended by Vladimirsky in \cite{VladMSSP}.

While the methods for multi-objective control of
{\em discounted infinite-horizon} MDPs
have been developed at least since 1980s \cite{White_MDP},
to the best of our knowledge, no such methods have
been introduced for multi-objective SSPs so far.  Similarly to the deterministic case,
we will assume that the secondary cost $c_i(\ba)=c(\x_i, \ba)$
is specified for all controls,
that the secondary costs are conveniently quantized (e.g., integers), and that
the goal is to define the value function by minimizing only over those policies,
whose cumulative secondary costs always stay between $0$ and
some positive integer $B$.
More formally, we consider stationary policies on the extended graph
(i.e., $\mu: X \times \mathbb{Z} \rightarrow \bigcup_i A_i$) and
define the restricted set of allowable policies
$$
\M(\x_i, b) = \left\{\mu \mid
P\left( 0 \leq \, b \ominus c(\y_0, \mu(\y_0, b_0)) \ominus \ldots \ominus c(\y_s, \mu(\y_s, b_s))  \, \leq B \right)
\; = \; 1, \quad \Forall s>0 \right\},
$$
where $\y = (\y_0 = \x_i, \y_1, \ldots)$ is a random path and $(b_0 = b, b_1, \ldots)$
is the corresponding sequence of secondary budgets resulting
from applying $\mu$ starting at $(\x_i,b)$.  The value function can be then defined
as $W_i^b = \min_{\mu \in \M(\x_i, b)} \J(\x_i, b, \mu)$ and the optimality principle
yields the system of equations
\begin{equation}
\label{eq:SSP_general_multi}
W_i^b \; = \;  \min\limits_{\ba \in A_i^b}
\left\{
C(\x_i, \ba) + \sum\limits_{j=1}^{M+1} p_{ij}(\ba) W_j^{b \ominus c_i(\ba)}
\right\}; \qquad
b \in [0, B]; \qquad \Forall i \in \I,
\end{equation}
where $A_i^b = \{\ba \in A_i \mid  c_i(\ba) \leq b \},$
and $W_t^b = 0.$

This system can always be solved by the value iterations on the expanded graph
regardless of signs of $c_i(\ba)$'s. But for general SSPs,
this iterative algorithm does not have to converge in a finite number of steps,
if the problem is not causal.
For the subclass of MSSP problems, the criteria developed in \cite{VladMSSP}
can be used on the {\em primary} running cost function $C_i(\ba)$
to determine if a more efficient Dijkstra-like method is applicable;
in the latter case the system is {\em implicitly causal}.

If we assume that all $c_i(\ba) \geq c > 0$, this obviously rules out policies
which do not guarantee terminating in  $B / c$ steps.
Similarly to the deterministic case,
this assumption yields an {\em explicit causality}, allowing to solve the system
\eqref{eq:SSP_general_multi} in a single sweep (from $b=0$ to $b=B$).

Finally, if $c_i(\ba)$'s are known to be non-negative,
$W_i^{b_1}$ may depend on $W_j^{b_2}$ only if $b_1 \geq b_2$.
Assuming the MSSP structure and the absolute causality of each $C_i(\ba)$,
the causality of the full problem is {\em semi-implicit}
and a Dijkstra-like method can be used on each individual $b$-slice of
the extended graph separately.

Returning to modeling the use of resources
in an ``unsafe'' set only, we can achieve this by setting $c_i(\ba) = 0$,
whenever $\x_i \in \safe$ and $c_i(\ba) > 0$ for
$\x_i \in \nsafe$, yielding semi-explicit causality.
On the other hand, to model the budget resets in ``safe'' nodes,
we can instead define $c_i(\ba) = -B < 0$, for all $\x_i \in \safe.$
As in the previous section, the resets make it
unnecessary to store more than one copy
of safe nodes (since $b=B$ on $\safe$).
The resets make it harder to perform domain restriction techniques
similar to those described in Remark \ref{rem:DP_domain_restrict}.
This difficulty can be circumvented by a minor modification of
Algorithm \ref{alg:DSP_iter}.
In section \ref{ss:Numerics} we explain that the SSP interpretation of
the PDE discretization is actually causal on the $\safe$ (and therefore
a Dijkstra-like method is applicable to recomputing the $W_j^B$ values
for $ \x_j \in \safe$); see also Remark \ref{rem:valueiter}.
For the general budget-reset SSPs, Algorithm \ref{alg:DSP_iter}
would have to be changed to use the value iterations on $\safe$ instead.

\fi

\section{Continuous problems with budget reset.}
\label{s:continuous_reset}

\subsection{Continuous optimal control formulation}\label{ss:eikonal}
We begin with a brief review of the basic exit time optimal control
problem
in the absence of resource constraints.
Given an equation for controlled motion in $\domain$, the objective is to characterize
optimal paths (and their associated optimal costs) from every point
in a domain $\domain$ to the boundary $\boundary$.
The static Hamilton-Jacobi-Bellman (HJB)
PDE
discussed in this subsection is fairly
standard and we focus only on the details important for the subsequent discussion.
For a comprehensive treatment of more general
resource-unconstrained problems, we refer interested readers to the
monograph \cite{BardiDolcetta} and the references within.


Consider an open bounded set $\domain\subset\R^n$. Let $A\in\R^n$ be a compact set of
control values and $\A$ the set of admissible controls,
\bq
\A = \{\text{measurable functions }\control\colon\ [0,+\infty) \to A\}.
\eq
Let $\y\colon [0,\infty)\rightarrow \domain$ represent the time-evolution of a particle state
governed by the dynamical system
\bq\label{pathx}
\begin{aligned}
\frac{d \y}{d t}(t)=\dot{\y}(t) &= \fB(\y(t), \control(t)),\\
\y(0) &= \x,
\end{aligned}
\eq
where $\fB\colon\domain\times A\rightarrow \R^n$ represents the velocity.
We refer to all $\y(\cdot)$ that satisfy \eqref{pathx} with admissible controls $\A$ as \emph
{admissible paths}.
For notational simplicity, we have suppressed explicitly writing the dependence of $\y
(\cdot)$ on the control $\control\in\A$ and the initial state $\x\in\domain$.




Define $\runcost\colon\domain\rightarrow\R$ as the running cost and $\term \colon
\boundary\rightarrow \R$ to be the terminal cost when the state reaches a point on the
boundary\footnote{In many applications, the objective is to optimally steer towards a
prescribed closed target set $\mathcal{T}\subset\domain$. This fits into our formulation
by setting the domain to be $\domain\backslash\mathcal{T}$
with the terminal cost
$\term = \infty$ on $\boundary$ and $\term$ finite on $\partial\mathcal{T}$.}.
Thus, the total cost associated with an initial state $\x\in\domain$ and an admissible
control $\control(\cdot)\in\A$ is
\bq\label{costfunc}
\J(\x,\control(\cdot)) = \int^{T}_{0}\runcost(\y(s),\control(s))\, ds+ \term(\y(T)),
\eq
where $T = \min\{t \mid \y(t)\in\boundary\}$.
Then the \emph{value function} $u\colon\domain\rightarrow\R$ is the minimal total cost to
the boundary $\boundary$ starting at $\x$:
\bq\label{valuefunc}
\val(\x)  = \inf_{\control(\cdot)\in\A}\J(\x,\control(\cdot)).
\eq
By Bellman's optimality principle \cite{Bellman_PNAS}, for every sufficiently small $
\tau>0$,
\bq\label{dpp_1}
\val(\x) = \inf_{\control(\cdot) \in\A}\left\{ \int^{\tau}_{0}\runcost(\y(s),\control(s))\, ds +  \val
(\y(\tau))\right\}.
\eq
A Taylor series expansion about $\x$ yields the \emph{Hamilton-Jacobi-Bellman
equation}:
\bq\label{hjbeqn}
\min_{\control\in A}\{\runcost(\x,\control)+\grad \val (\x) \cdot \fB(\x,\control)\} = 0.
\eq
Note that, since the minimization is taken over a compact set $A$, the infimum becomes a
minimum.
From the definition, the boundary condition on the value function is
\bq\label{bc}
\val(\x) = \term(\x), \quad \x\in\partial\domain.
\eq

For the functions $\fB, \runcost, \term$ introduced above, we make the following
assumptions:
\begin{enumerate}[label= (A\arabic{*}), ref=(A\arabic{*})]
\item\label{assumptionLip}
$\fB$ and $\runcost$ are Lipschitz continuous.

\item\label{assumptionFK}
There exist constants $\runcost_1, \runcost_2$, such that $0<\runcost_1\le \runcost(\x,
\control)\le \runcost_2$ for all $\x\in\cdomain$ and $\control\in A$.

\item \label{stc}
The velocity $\fB: \cdomain \times A \mapsto \R^n$ is bounded
(i.e., $\|\fB\| \leq F_2$) and
the motion in every direction is always possible, i.e.,\\
$\hspace*{1cm}
\forall \x \in \cdomain,$ and all unit vectors $\bv \in \R^n$,
$\quad \exists \ba \in A \text{ s.t. }
\bv \cdot \fB(\x, \ba) = \|\fB(\x, \ba)\| \geq F_1> 0.$\\
Moreover, we assume that the
\emph{scaled vectogram} $\{\fB(\x,\control)/\runcost(\x,\control) \mid
\control\in A\}$ is strictly convex at each $\x\in\domain$.

\item\label{assumptionLSC}
$\term$ is lower semi-continuous and $\min_{\boundary}\term < \infty$.

\end{enumerate}
The assumption \ref{stc} yields the \emph{small-time controllability} property \cite
{BardiFalcone} which guarantees the continuity of the value function $\val$ on $\domain$.
In general, even under the aforementioned assumptions, classical solutions to the PDE
\eqref{hjbeqn} usually do not exist, while weak (Lipschitz-continuous) solutions are not
unique.
Additional selection criteria were introduced by Crandall and Lions \cite{CranLion}
to recover the unique weak solution (the so called \emph{viscosity solution}),
coinciding with the value function of the control problem.




Several important classes of examples will repeatedly arise in the
following sections.
We will refer to problems as having \emph{geometric dynamics} if
 \bq\label{geometric_dynamics}
A=\{\control \mid \|\control\|=1\}, \qquad \fB(\x,\control)=\control f(\x,\control).
\eq
For control problems with geometric dynamics and unit running cost ($\runcost(\x,\control)=1$),
the PDE \eqref{hjbeqn} reduces to
\bq\label{eikonal}
\max_{\|\control\|=1}
\left\{ f(\x,\control)\ \left(-\control \cdot\!\grad \val(\x) \right) \right\} = 1.
\eq
This models a particle which travels through the point $\x$ in the
direction $\control\in A$ at the speed $f(\x,\control)$.
For zero terminal cost $\term=0$, the value $\val(\x)$ equals the minimum travel time of
such a particle from $\x$ to $\boundary$.
If we further assume isotropic speed $f(\x,\control) = f(\x)$, \eqref{eikonal} simplifies to the
\emph{Eikonal equation}:
\bq\label{eikonal2}
f(\x)\|\grad \val(\x)\| = 1.
\eq
Finally, for the unit speed 
with $\term=0$, the viscosity solution
to \eqref{eikonal2} is the distance function to $\boundary$.

Under the assumptions \ref{assumptionLip}-\ref{assumptionLSC}, the value function $\val$
can be used to extract the optimal feedback control $\control^*=\control^*(\x)$, provided $
\val$ is smooth at $\x\in\domain$.
For the Eikonal case \eqref{eikonal2}, for instance, it can be shown that $\control^*(\x) = -
\grad \val(\x) / |\grad \val(\x) |$ (note that $\grad \val\ne 0$ under the assumptions \ref
{assumptionFK});
the points where $\grad \val$ does not exist are precisely those where the optimal control is not unique.
Subsequently the optimal trajectory $\y^*(\cdot)$ from $\x\in\domain$ can be computed as
solution to the initial value problem \eqref{pathx} with $\control(t) = \control^*(\y(t))$.
It is easy to show that $\y^*(\cdot)$
coincides with the characteristic curve to $\x$ from the boundary $\boundary$,
but traveling in the opposite direction.
Furthermore,
the optimal control will be unique at $\y^*(t)$
for all $t>0$ until this trajectory reaches $\boundary$.

\iffullversion
\else
\FloatBarrier
\fi
\subsection{Augmented PDE for the budget constrained problem}\label{ss:bcp}
The problem of budget constrained optimal paths was proposed in \cite{KumarVlad},
where general multi-criterion optimal control problems were reduced to solving an \emph{augmented
PDE} on an expanded state space.
For the purpose of this article, we shall briefly describe this method for the single budget
constraint.

Define the extended space $\edomain = \domain\times(0,\maxb]$,
where $\maxb>0$ is a prescribed maximum resource budget.
We shall call a point in $(\x,\bud)\in\edomain$ an \emph{extended state}.
Here, $\bud$ represents the current resource budget.
We represent a path parametrized by time $t\ge 0$ in the extended domain as
$\z(t) = (\y(t),\budp(t))\in\edomain$.

Next, define the secondary cost $\runcostb \colon \domain\times A \rightarrow \R$, strictly
positive, which is the decrease rate of the budget.
We shall assume that $\runcostb$ is Lipschitz continuous and there exist constants
$\runcostb_1, \runcostb_2$ such that
\bq\label{Khat_assump}
0 <\runcostb_1\le \runcostb(\x,\control) \le \runcostb_2,\quad\forall \, \x\in\domain,
\control\in A.
\eq
Then the equations of motion in $\edomain$ are
\bq\label{pathxbud}
\begin{aligned}
\dot{\y}(t) &= \fB(\y(t), \control(t)), &\y(0) = \x, \\
\dot{\budp}(t) &= -\runcostb(\y(t),\control(t)),&\budp(0) = \bud.
\end{aligned}
\eq
For an arbitrary control $\control\in\A$, define the terminal time starting at the extended
state $(\x,\bud)$ as
\[
\hat T(\x, \bud, \control)  =
\min \{t \mid \y(t)\in\partial\domain,\,\budp(t) \ge 0\}.
\]
Define the set of all \emph{feasible} controls for paths starting at the extended state $(\x,
\bud)$ as
\[
\hat\A(\x,\bud) = \{\control\in\A \mid \hat T(\x, \bud, \control)<\infty\}.
\]
We shall call paths corresponding to feasible controls as feasible paths.
Then the value function is
\[
\vale(\x,\bud) = \inf_{\control(\cdot)\in\hat\A(\x,\bud)}\J(\x,\control(\cdot)).
\]
The boundary condition is
\bq\label{bc_bc}
\vale(\x,\bud) =
\begin{cases}
\term(\x) &  \x\in\boundary, \bud\in[0,\maxb]\\
\infty & \x\in\domain, \, \bud= 0.
\end{cases}
\eq
Note that $\vale(\x,\bud)\ge \val(\x)$, since in the unconstrained case the set of controls is larger.
Moreover, if the starting budget $\bud$ is insufficient to reach the boundary,
the set $\hat\A(\x,\bud)$ is empty and $\vale(\x,\bud) = \infty$.
Wherever the value function is finite on $\edomain$,
Bellman's optimality principle similarly yields the HJB equation
\bq\label{hjb_aug}
\min_{\control\in A}\left\{\grad_{\x} \vale \cdot \fB(\x,\control) - \runcostb(\x,\control)\frac
{\partial \vale}{\partial \bud} + \runcost(\x,\control) \right\} = 0.
\eq
Here $\grad_{\x}$ denotes the vector-valued operator of partial derivatives with respect to
$\x$.
For the case of isotropic velocity and unit running cost $\runcost=1$, and $\runcostb(\x,
\control) = \runcostb(\x)$, the latter equation reduces to
\bq\label{isotropic_bc}
f(\x)|\grad_{\x} \vale| + \runcostb(\x)\frac{\partial \vale}{\partial \bud} = 1.
\eq

The (augmented) velocity function $(\fB(\x,\control),-\runcostb(\x,\control))$ in
\eqref{pathxbud} no longer satisfies the assumption \ref{stc}.
This implies the lack of small time controllability in $\edomain$ and
possibly discontinuities in the corresponding value function $\vale$.
While the original definition of viscosity solutions required continuity,
a less restrictive notion of \emph{discontinuous viscosity solutions} applies in
this case; see \cite[Chapter 5]{BardiDolcetta}, \cite{Soravia_IntegralConstraint},
\cite{MottaRampazzo} and the references therein.

The assumption \eqref{Khat_assump} on $\runcostb$ implies that the characteristics
move in strictly monotonically increasing $\bud$-direction (see also Property \ref
{prop_mono}).
Thus, the value function $\vale$ is \emph{explicitly causal} in $\bud$:
the value function at a fixed point $\x'\in\domain$, $\bud'\in(0,\maxb]$ depends only on the
value function in $\{(\x,\bud)\mid \bud < \bud'\}$.
Moreover, since $\runcostb > 0$, by rearranging the PDE \eqref{isotropic_bc}
\[
\frac{\partial \vale}{\partial \bud} = \frac{1}{\runcostb(\x)}\left( 1- f(\x)|\grad_{\x} \vale|\right),
\]
the problem can be viewed as a ``time-dependent'' Hamilton-Jacobi equation, where $
\bud$ represents the ``time.''
In \cite{KumarVlad}, this explicit causality property of \eqref{hjb_aug} yielded an efficient
numerical scheme involving a single ``sweep'' in the increasing $\bud$-direction (see
section \ref{sec:oopath}).
We note the parallelism with the discrete problem formulation,
described in case 2 of Remark \ref{rem:types_of_causality}.

\begin{remark}\label{rem:semi-implicit}
Before moving on to the budget reset problem, we briefly discuss a slight generalization to
the budget constrained problem in \cite{KumarVlad}.
Suppose we
relax the lower bound in \eqref{Khat_assump} to allow $\runcostb=0$ on some closed subset
$\safe\subset\cdomain$.
The set $\safe$ can be interpreted as a ``safe'' set, on which the resources are not used.
(In this setting, the problem previously considered in
\cite{KumarVlad} corresponds to $\nsafe = \domain$.)
Clearly, explicit causality no longer holds when $\safe \cap \domain \neq \emptyset$;
rather, $\vale$ has a \emph{semi-implicit causality property}:
for any fixed point $\x'\in\domain$, $\bud'\in(0,\maxb]$, the value function
$\vale(\x', \bud')$ can only depend on the values
$\{\vale(\x,\bud) \mid \x\in\cdomain, \, \bud \in (0, \bud'] \}$.
This possible interdependence between values on the same $\bud$ level
makes it impossible to transform the PDE into a time-dependent Hamilton-Jacobi equation
(as with the explicit causality case).
Instead, the value function satisfies a separate Eikonal equation on the $\text{int}(\safe)$ part of each $\bud$-slice:
\bq\label{decoupleEikonal}
\min_{\control\in A}\{\grad_{\x} \vale(\x,\bud)\cdot \fB(\x,\control) + \runcost(\x,\control) \} =
0,\qquad
\forall \bud\in[0,\maxb], \x\in\text{int}(\safe).
\eq
We again note the parallelism with the discrete formulation, described in case 3 of
Remark \ref{rem:types_of_causality}; see also Figures \ref{ex2} and \ref{ex3}.
\end{remark}

\subsection{A PDE formulation for the budget reset problem}\label{ss:br_pde}
We now consider a continuous analog of the problem
in section \ref{ss:DSP_with_reset} and
Figure \ref{ex5}.
Partition the closure of the domain $\bar{\domain} = \nsafe\cup\safe$, where $\nsafe$ is
open (thus, $\boundary\subset\safe$).
Here, $\nsafe$ represents the ``unsafe'' set, where the budget decreases at some
Lipschitz continuous rate $\runcostb$,
\bq\label{Khat_assump2}
0 <\runcostb_1\le \runcostb(\x,\control) \le \runcostb_2,\quad\forall \, \x\in\nsafe,
\control\in A.
\eq
and $\safe$ represents the ``safe'' set, where the budget resets to its maximum.
We denote the interface between the two sets in the interior of the domain as
$\interface = \partial \nsafe \cap \domain.$
We will further assume that the set $\safe \, \backslash \,
\{\x \in \boundary \mid q(\x) < \infty \}$
consists of finitely many connected components.

To model the budget reset property, the budget $\budp(t)$ is set to $\maxb$ in $\safe$.
Thus, equations \eqref{pathxbud} are modified accordingly in $\safe$, and the resulting
dynamics is described by a {\em hybrid system}:

\begin{align}
&\begin{cases}
\dot{\y}(t) &= \fB(\y(t), \control(t))\\
\dot{\budp}(t) &= -\runcostb(\y(t),\control(t))
\end{cases}
& \text{ if }\y(t)\in\nsafe, \label{pathreset_U}
\\ &\begin{cases}
\dot{\y}(t) &= \fB(\y(t), \control(t))\\
\budp(t) &= B
\end{cases}
& \text{ if }\y(t)\in\safe,\label{pathreset_S}
\end{align}
with initial states:
\bq\label{pathresetinit_S}
\begin{aligned}
\budp(0) &=
\begin{cases}
\bud & \text{if } \y(0)\in\nsafe\\
\maxb & \text{if } \y(0)\in\safe.
\end{cases}
\end{aligned}
\eq


Similarly to
section \ref{ss:bcp}, we define the terminal time for a control $
\control\in\A$ as
\[
\tilde T (\x, \bud, \control) = \min\{t \mid \y(t)\in\boundary, \budp(\tau)\ge 0 \text{ for all }\tau\in[0,t]\}
\]
where the paths $\y(\cdot)$ satisfy the equations \eqref{pathreset_U}-\eqref
{pathresetinit_S}.
Let $\tilde\A(\x,\bud)=\{\control\in\A \mid \tilde T(\x, \bud, \control)<\infty\}$ to be the set of all
feasible controls for the budget reset problem.
Then the value function is defined as
\[
\valr(\x,b) = \inf_{\control(\cdot)\in\tilde\A(\x,\bud)}\J(\x,\control(\cdot)).
\]

\iffullversion
\begin{remark}
A simple, illustrative example is a budget reset problem, in which the objective is to reach
a target in minimum time ($\runcost=1$, $\term = 0$) while avoiding a prolonged
continuous tour in $\nsafe$ of more than time $\maxb>0$ (hence, $\runcostb = 1$).
\end{remark}
\fi

For the budget reset problem, a path $\z(\cdot) =(\y(\cdot),\budp(\cdot))$ in the extended
domain never visits points $\{(\x,\bud)\mid \x\in\safe, b<B\}$.
Thus, we consider the \emph{reduced extended domain}
\[
\rdomain = \udomain \cup \sdomain \; \subset \edomain,
\]
where,
\[
\begin{aligned}
\udomain &= \nsafe\times (0,B]\\
\sdomain &= \safe\times\{B\}.
\end{aligned}
\]
This reduction of the extended state space is analogous to the discrete example described
in Figure~\ref{ex5}.

The corresponding HJB equation
for this {\em hybrid control problem}
in $\rdomain$ can be described separately in $\udomain
$ and $\sdomain$. To distinguish between the value functions in these two domains we
write:
\[
\valr(\x,\bud) \equiv
\begin{cases}
\valu(\x,\bud) & (\x,\bud) \in \domain_{\nsafe}\\
\vals(\x) & \x \in \safe
.
\end{cases}
\]
As in the no-resets case, the value function $w$ is usually infinite on parts of $\rdomain$
(if the starting budget is insufficient to reach the boundary even with resets);
see also section \ref{ss:mfl}.
Bellman's optimality principle can be used to show that, wherever $w$ is finite,
it should satisfy the following PDEs:
\begin{align} \label{augPDE1}
\min_{\control\in A}\left\{\grad \valu(\x,\bud) \cdot \fB(\x,\control)- \runcostb(\x,\control)\frac
{\del \valu}{\del\bud}(\x,\bud)+ \runcost(\x,\control)\right\}  &= 0, &(\x,\bud)\in\udomain,\\
\label{augPDE2}
\min_{\control\in A}\left\{\grad \vals(\x) \cdot \fB(\x,\control)+ \runcost(\x,\control)\right\} &=
0, & \x\in\text{int}(\safe).
\end{align}
We note the similarity between the previously defined PDE \eqref{decoupleEikonal}
and the last equation \eqref{augPDE2}.
But while the latter is solved on one $\bud$-slice only (for $\bud=\maxb$),
the former is actually a $\bud$-parametrized family of static PDEs,
which are coupled to each other only through the boundary with $\udomain$.
This difference is a direct consequence of the budget reset property.

The boundary conditions on $\valr$ are:
\bq\label{br_bc}
\valr(\x,\bud) =
\begin{cases}
\term(\x) & \x\in\boundary\subset\safe\\
\infty & \x\in\domain, \bud = 0.
\end{cases}
\eq
In addition,
the following compatibility condition
(motivated below by Proposition \ref{prop_cont})
holds between $\valu$ and $\vals$, wherever $\valr$ is finite on $\interface = \partial \nsafe \cap \domain$ :
\bq\label{comp_1}
\lim_{\x'\rightarrow\x\atop\x'\in\nsafe}\valu(\x',\bud) = \vals(\x),\quad\text{for all}\,
\x \in\interface, \bud \in (0,\maxb].
\eq

We now list several properties of the value function $\valr$.
The proofs of the first two of them are omitted for brevity.

\begin{property}\label{prop_charac}
In $\domain_\nsafe$, the $\bud$-coordinate is monotonically decreasing along every trajectory when traversing forward in time.
\end{property}

\begin{property}\label{prop_mono}
If $\bud_1 \le \bud_2\le\maxb$ then $\valu(\x,\bud_1) \ge \valu(\x,\bud_2)$ for all $\x\in
\nsafe$.
\end{property}

From here on we will use
$\safe_\reach = \{\x\in\safe \mid \vals(\x)<\infty\}$
to denote the ``reachable'' part of the safe set.
We note that every connected component of
$\safe \, \backslash \, \{\x \in \boundary \mid q(\x) < \infty \}$
is either fully in $\safe_\reach$ or does not intersect it at all.
As a result, the number of connected components in the latter set is also finite.

\begin{lemma}\label{cont_lemma}
$\vals$ is locally Lipschitz continuous on $\safe_\reach\backslash\boundary$.
\end{lemma}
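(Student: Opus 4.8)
The plan is to prove the bound directly from the control-theoretic definition of $\vals$, rather than from the PDE \eqref{augPDE2}, by the standard near-optimal-path concatenation argument for exit-time problems, adapted to the hybrid reset dynamics \eqref{pathreset_U}--\eqref{pathresetinit_S}. Fix $\x_0\in\safe_\reach\backslash\boundary$. Since $\x_0\notin\boundary$, I may choose a ball $N=\{\x : |\x-\x_0|<\rho\}$ with $\overline{N}\cap\boundary=\emptyset$ and $\rho$ so small that $\runcostb_2\,(2\rho)/F_1<\maxb$. It suffices to produce a constant $L=\runcost_2/F_1$, independent of $\x_0$, with $|\vals(\x_1)-\vals(\x_2)|\le L\,|\x_1-\x_2|$ for all $\x_1,\x_2\in\safe\cap N$. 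Since $\x_0\in\safe_\reach\backslash\boundary$ was arbitrary, local Lipschitz continuity on $\safe_\reach\backslash\boundary$ then follows.

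Fix $\epsilon>0$ and, since $\vals(\x_1)<\infty$, pick a feasible control $\control^\epsilon\in\tilde\A(\x_1,\maxb)$ whose trajectory $\z_1(\cdot)=(\y_1(\cdot),\budp_1(\cdot))$ reaches $\boundary$ with $\J\le\vals(\x_1)+\epsilon$. Using the small-time controllability guaranteed by \ref{stc}, I would steer from $\x_2$ straight toward $\x_1$, choosing at each point a control $\control$ with $\fB(\cdot,\control)$ pointing along $\x_1-\x_2$ at speed $\ge F_1$; this reaches $\x_1$ in time $t^\ast\le|\x_1-\x_2|/F_1$ along a segment of length $O(|\x_1-\x_2|)$. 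Concatenating this connecting segment with $\control^\epsilon$ gives an admissible control from $\x_2$, and by \ref{assumptionFK} the additional running cost is at most $\runcost_2\,t^\ast\le(\runcost_2/F_1)|\x_1-\x_2|$. Hence $\vals(\x_2)\le\vals(\x_1)+(\runcost_2/F_1)|\x_1-\x_2|+\epsilon$; letting $\epsilon\to0$ and interchanging $\x_1,\x_2$ gives the claim.

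The step that makes the concatenation legitimate is the budget reset: the moment the connecting trajectory (re)enters $\safe$ — in particular on arrival at $\x_1\in\safe$ — the budget is restored to $\maxb$ by \eqref{pathreset_S}, so the tail $\control^\epsilon$, built for a start at $(\x_1,\maxb)$, remains feasible when appended regardless of the incoming direction. It therefore remains only to check that the connecting segment itself keeps $\budp\ge0$. Starting from $\x_2\in\safe$ with $\budp=\maxb$, any excursion of the segment into $\nsafe$ lasts at most $t^\ast$ and so, by \eqref{Khat_assump2}, consumes at most $\runcostb_2 t^\ast\le\runcostb_2(2\rho)/F_1<\maxb$ of budget; every intervening return to $\safe$ only resets it upward. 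This also shows $\vals(\x_2)<\infty$, so reachability propagates through $\safe\cap N$.

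I expect the main obstacle to be exactly this feasibility bookkeeping near the interface $\interface$: the straight-line steering from \ref{stc} may cross $\interface$ repeatedly, so one must track the hybrid switching in \eqref{pathreset_U}--\eqref{pathreset_S} and verify the budget never vanishes during the crossings — which is where the uniform bounds $F_1>0$, $\runcostb_2<\infty$ and the smallness of $\rho$ are used. The restriction to a local statement away from $\boundary$ is essential because $\term$ is only lower semi-continuous there (assumption \ref{assumptionLSC}), so no uniform cheap-connection estimate is available near $\boundary$; away from it the construction yields the uniform constant $\runcost_2/F_1$ on each component. Note that this direct argument never invokes the compatibility condition \eqref{comp_1}, which instead belongs to the PDE characterization of $\valr$.
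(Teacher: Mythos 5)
Your proof is correct and follows essentially the same route as the paper's: connect the two nearby points by a straight-line path traversed at speed at least $F_1$ (guaranteed by \ref{stc}), note it is budget-feasible because the ball is small enough that $\runcostb_2\cdot(\text{travel time})<\maxb$, and conclude via the optimality principle that the cost difference is at most $(\runcost_2/F_1)\|\x_1-\x_2\|$. The only difference is presentational — you spell out the $\epsilon$-optimal concatenation and the reset-upon-arrival bookkeeping that the paper compresses into a single invocation of Bellman's principle.
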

\begin{proof}
We will use the notation $B_\epsilon(\x) = \{ \x' \in \R^n \mid \|\x-\x'\| < \epsilon \}.$\\
Choose a point $\x\in\safe_\reach\backslash\boundary$,
and an $\epsilon>0$ sufficiently small so that $B_\epsilon(\x)\subset\domain$ and
\bq\label{feas_cond1}
\frac{\epsilon}{F_1} < \frac{\maxb}{\runcostb_2}
.
\eq
Take any $\x'\in B_\epsilon(\x)\cap(\safe_\reach\backslash\boundary)$.
Then, a straight line path from $\x$ to $\x'$ will take at most $\|\x-\x'\|/F_1$ time to
traverse.
Furthermore, by \eqref{feas_cond1}, such a path must be feasible.
Thus, Bellman's optimality principle gives
$
\vals(\x') \le  \runcost_2 / F_1\|\x-\x'\| + \vals(\x).
$
By swapping the roles of $\x$ and $\x'$ we have $|\vals(\x')-\vals(\x)| \le  \runcost_2 / F_1\|
\x-\x'\|$, as desired.
\end{proof}



A consequence of Lemma \ref{cont_lemma} is that $\vals$ is bounded on each
connected component of $\safe_\reach$, excluding $\boundary$.
Since $\safe_\reach$ consists of finitely many connected components,
$\vals^{\max} = \sup\limits_{\x\in\safereach\backslash\boundary}\vals(\x)$
is also finite.

\begin{lemma}\label{bound_lemma}
For a
point $\x\in\nsafe$ such that $\valu(\x,\maxb)<\infty$,
let $\y^*$ be an optimal feasible path from $\x$ to $\boundary$.
Let $T(\x)$ be the first arrival time of $\y^*$ to $\boundary$.
Then,
\bq\label{bound_T}
T(\x) \le  \frac{\maxb}{\runcostb_1}+\frac{\vals^{\max}}{\runcost_{1}}.
\eq

\end{lemma}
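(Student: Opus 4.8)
The plan is to split the optimal trajectory $\y^*$ at the first instant $t_1$ at which it touches $\safe$, writing $\bp = \y^*(t_1)$, and then to bound the two resulting pieces by two different mechanisms: the initial sojourn in $\nsafe$ by a budget--feasibility argument, and the remainder by a cost--to--go argument anchored at $\vals^{\max}$. First I would treat the segment $\y^*|_{[0,t_1]}$, which by definition of $t_1$ lies entirely in $\nsafe$. Since we are computing $\valu(\x,\maxb)$ the starting budget is $\budp(0)=\maxb$, and by \eqref{Khat_assump2} the budget decreases at rate $\runcostb\ge\runcostb_1>0$ throughout $\nsafe$, so $\budp(t_1)\le \maxb-\runcostb_1 t_1$. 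Feasibility of $\y^*$ forces $\budp(t_1)\ge 0$, whence $t_1\le \maxb/\runcostb_1$, which is exactly the first term in \eqref{bound_T}.

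Next I would bound the remaining time $T(\x)-t_1$. If $\bp\in\boundary$ the path has already arrived, so $T(\x)=t_1\le\maxb/\runcostb_1$, which lies within the claimed bound (as $\vals^{\max}\ge 0$). Otherwise $\bp$ lies on the interior interface $\interface\subset\safe\backslash\boundary$; since $\bp$ sits on a feasible optimal path issuing from a state of finite value, we have $\bp\in\safereach\backslash\boundary$, and therefore $\vals(\bp)\le\vals^{\max}$. By Bellman's optimality principle the tail $\y^*|_{[t_1,T(\x)]}$ is itself optimal starting from $\bp$ with the reset (full) budget, so its accumulated primary cost equals $\valr(\bp,\maxb)=\vals(\bp)$. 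Because $\runcost\ge\runcost_1>0$ by \ref{assumptionFK}, this cost dominates $\runcost_1\,(T(\x)-t_1)$, giving $T(\x)-t_1\le \vals(\bp)/\runcost_1\le \vals^{\max}/\runcost_1$. Adding the two estimates yields \eqref{bound_T}.

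The two displayed inequalities are routine; the step requiring care is the identification of the tail's cost with $\vals(\bp)$, which combines the dynamic programming principle with the instantaneous budget reset at $\bp$ (and implicitly the compatibility condition \eqref{comp_1} linking $\valu$ and $\vals$ across $\interface$). The other point to verify is that $\bp\in\safereach\backslash\boundary$, so that $\vals^{\max}$ is a legitimate upper bound on $\vals(\bp)$; in particular one must treat separately the degenerate case in which the first contact with $\safe$ happens directly on $\boundary$, where the second term is not needed at all.
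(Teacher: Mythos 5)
Your proposal is correct and follows essentially the same argument as the paper: split $\y^*$ at the first time it reaches $\safe$, bound the initial unsafe sojourn by $\maxb/\runcostb_1$ via budget feasibility, and bound the tail by $\vals^{\max}/\runcost_1$ via the lower bound $\runcost\ge\runcost_1$ on the running cost. Your write-up merely adds detail the paper leaves implicit (the degenerate case where first contact with $\safe$ occurs on $\boundary$, and the dynamic-programming identification of the tail's cost with $\vals(\bp)$).
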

\begin{proof}
Let $t^*> 0$ be the first instance in time such that $\y^*(t^*)\in\safe$.
We subdivide $\y^*$ into two segments, the first from $\x$ to $\y^*(t^*)$ and the second
from $\y^*(t^*)$ to the final point $\y^*(T(\x))\in\boundary$.
The time taken to traverse the first segment is bounded by $\maxb/\runcostb_1$ to satisfy
the budget feasibility constraint, and the time on the second segment is bounded by $\vals^{\max}/
\runcost_1$.
\end{proof}

\begin{prop}\label{prop_cont}
The compatibility condition \eqref{comp_1}, holds on $\partial\safe_{\reach}\backslash
\boundary$ for $\bud\in(0,\maxb)$.
\end{prop}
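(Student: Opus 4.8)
The plan is to fix $\x\in\partial\safe_\reach\backslash\boundary$ and $\bud\in(0,\maxb)$ and to establish \eqref{comp_1} by proving the two matching inequalities $\limsup_{\x'\to\x,\,\x'\in\nsafe}\valu(\x',\bud)\le\vals(\x)$ and $\liminf_{\x'\to\x,\,\x'\in\nsafe}\valu(\x',\bud)\ge\vals(\x)$. First I would record that $\vals(\x)$ is finite at such a boundary point: picking $\x_k\in\safe_\reach$ with $\x_k\to\x$ and joining $\x$ to $\x_k$ by a straight segment (traversable at speed at least $F_1$ by the small-time controllability in \ref{stc}), the segment is feasible once $\|\x-\x_k\|$ is small, exactly as in the estimate \eqref{feas_cond1}; since the budget is reset to $\maxb$ upon arrival at $\x_k\in\safe$, Bellman's principle gives $\vals(\x)\le \runcost_2\|\x-\x_k\|/F_1+\vals(\x_k)<\infty$.

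For the upper bound, take $\x'\in\nsafe$ near $\x$ and steer along the straight segment from $\x'$ to $\x$ at speed at least $F_1$. The elapsed time is at most $\|\x'-\x\|/F_1$, so the consumed budget is at most $\runcostb_2\|\x'-\x\|/F_1$, which is $\le\bud$ once $\|\x'-\x\|$ is small (here only $\bud>0$ is needed); hence the segment is feasible. Reaching $\x\in\safe$ resets the budget to $\maxb$, so Bellman's principle yields $\valu(\x',\bud)\le \runcost_2\|\x'-\x\|/F_1+\vals(\x)$, and letting $\x'\to\x$ gives the $\limsup$ inequality.

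The lower bound is the delicate direction, and it is where $\bud<\maxb$ is essential. Now start at $\x\in\safe$ with the full budget $\maxb$ and travel the straight segment to $\x'\in\nsafe$; this costs at most $\runcost_2\|\x-\x'\|/F_1$ and arrives at $\x'$ with residual budget $\maxb-\delta$, where $\delta\le\runcostb_2\|\x-\x'\|/F_1$. Since $\bud<\maxb$, for $\x'$ close enough to $\x$ we have $\maxb-\delta\ge\bud$, so the monotonicity Property \ref{prop_mono} gives $\valu(\x',\maxb-\delta)\le\valu(\x',\bud)$. Concatenating this segment with an almost-optimal feasible path out of $(\x',\maxb-\delta)$ produces a feasible path from $\x$, so $\vals(\x)\le \runcost_2\|\x-\x'\|/F_1+\valu(\x',\maxb-\delta)\le \runcost_2\|\x-\x'\|/F_1+\valu(\x',\bud)$; taking $\liminf$ as $\x'\to\x$ yields $\vals(\x)\le\liminf\valu(\x',\bud)$, and combining with the upper bound finishes the proof.

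The main obstacle is precisely this last step: a short detour from a safe point into $\nsafe$ necessarily drops the budget below $\maxb$, so the comparison through Property \ref{prop_mono} only closes when the target budget $\bud$ is strictly below $\maxb$; at $\bud=\maxb$ the monotonicity points the wrong way and the argument breaks, which is exactly why the statement is restricted to $\bud\in(0,\maxb)$. The remaining work is routine bookkeeping, namely checking that every connecting segment stays feasible (budget nonnegative until a reset or until $\boundary$), for which the quantitative estimates above together with \ref{stc} and the bounds $\runcostb\le\runcostb_2$ and $\runcost\le\runcost_2$ suffice.
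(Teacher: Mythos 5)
Your proposal is correct and follows essentially the same argument as the paper: the upper bound via a short feasible segment from $\x'\in\nsafe$ into $\x\in\safe$ (the paper's inequality \eqref{goal_1}), and the lower bound via a short detour from $\x$ to $\x'$ arriving with residual budget above $\bud$, closed using the monotonicity Property \ref{prop_mono} (the paper's inequality \eqref{goal_2}), which is exactly where the restriction $\bud<\maxb$ enters. The only addition is your explicit remark that $\vals(\x)<\infty$ on $\partial\safe_\reach\backslash\boundary$, which the paper leaves implicit.
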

\begin{proof}

Fix a point $\x\in\partial\safe_\reach\backslash\boundary$. 
Choose an $\epsilon_1>0$ sufficiently small so that $B_{\epsilon_1}(\x)\subset\domain$
and
$
\epsilon_1 < {\bud F_1}/{\runcostb_2}.
$
If we choose $\x'\in\nsafe$ such that $\|\x'-\x\|<\epsilon_1$, by arguments similar to the
proof of Lemma~\ref{cont_lemma}, the straight line path from $\x'$ to $\x$ is feasible.
Thus, by Bellman's optimality principle,
\bq\label{goal_1}
\valu(\x',\bud) \le \vals(\x) + \epsilon_1 \runcost_2/F_1,\quad \text{for $\bud\in(0,\maxb]$.}
\eq

Suppose now that $\bud \in (0,B)$.
Choose $\epsilon_2>0$ so that $B_{\epsilon_2}(\x)\subset\domain$ and
$
\epsilon_2 < \frac{(\maxb-\bud) F_1}{\runcostb_2}.
$
Assume $\x'\in\nsafe$ and $\|\x-\x'\|<\epsilon_2$, and consider the straight line path from $
\x$ to $\x'$.
Suppose the resource cost and time required to traverse this path is $\bud'$ and $\tau$,
respectively.
Since $\tau \le \epsilon_2/F_1$, we have by the choice of $\epsilon_2$,
$
\quad \bud' \le \tau \runcostb_2 \le\left(\epsilon_2/F_1\right)\runcostb_2 < \maxb - \bud,
$
or equivalently, $\quad \bud < \maxb - \bud'$.\\
Thus, by Bellman's optimality principle,
\bq\label{goal_2}
\begin{aligned}
\vals(\x)&\le \valu(\x',\maxb - \bud')  + \epsilon_2 \runcost_2/F_1\\
&\le \valu(\x',\bud)  + \epsilon_2 \runcost_2/F_1
\quad \text{for $\bud\in(0,\maxb)$,}
\end{aligned}
\eq
where the second inequality follows from Proposition~\ref{prop_mono}.
Therefore, \eqref{goal_1} and \eqref{goal_2} imply that if $\|\x'-\x\|<\epsilon=\min
\{\epsilon_1,\epsilon_2\}$ then $|\valu(\x',\bud) - \vals(\x)|\le\epsilon\runcost_2/F_1$ for $
\x'\in\nsafe, \bud\in(0,\maxb)$; this proves the compatibility condition \eqref{comp_1} for $
\bud\in(0,\maxb)$.
\end{proof}

\begin{prop}\label{prop_cont2}
Assume the running costs and the dynamics are isotropic; i.e.,
$\runcost(\x,\control)=\runcost(\x)$, $\runcostb(\x,\control) =\runcostb(\x),$
and $\fB(\x,\control) = \control f(\x), \|\control\|=1$.
Then the compatibility condition \eqref{comp_1} holds on $\partial\safe_{\reach}
\backslash \boundary$ for $\bud =\maxb$.
\end{prop}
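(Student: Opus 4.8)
The plan is to prove \eqref{comp_1} at the single remaining budget level $\bud=\maxb$ by establishing the two one-sided bounds
$\limsup_{\x'\to\x,\,\x'\in\nsafe}\valu(\x',\maxb)\le\vals(\x)$ and
$\liminf_{\x'\to\x,\,\x'\in\nsafe}\valu(\x',\maxb)\ge\vals(\x)$
for $\x\in\partial\safereach\backslash\boundary$, where $\vals(\x)$ is understood via the continuous (locally Lipschitz) extension of $\vals$ to $\x$ furnished by Lemma~\ref{cont_lemma}. The upper bound requires nothing new: the estimate \eqref{goal_1} in the proof of Proposition~\ref{prop_cont} was derived for all $\bud\in(0,\maxb]$, so it applies verbatim at $\bud=\maxb$. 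The lower bound is the whole content, and it is exactly where isotropy enters: the reverse estimate \eqref{goal_2} was obtained only for $\bud<\maxb$ because it used Property~\ref{prop_mono} to trade the arrival budget $\maxb-\bud'$ against $\bud$, and this trade degenerates at $\bud=\maxb$.

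For the lower bound I would construct a competitor path started at $\x\in\safe$. Fix $\x'\in\nsafe$ near $\x$, take a near-optimal feasible path from $(\x',\maxb)$, and let $\x^*\in\safereach$ be the first point at which it returns to $\safe$; write $c^*$ for the running cost accumulated on this first excursion through $\nsafe$ and $\bud^*\le\maxb$ for the budget consumed before the reset at $\x^*$, so that $\valu(\x',\maxb)=c^*+\vals(\x^*)$. When the excursion leaves budget slack, i.e.\ $\bud^*\le\maxb-\runcostb_2\|\x-\x'\|/F_1$, I simply prepend the straight leg $\x\to\x'$: starting from the full budget $\maxb$ at $\x$, this leg costs at most $\runcostb_2\|\x-\x'\|/F_1$ of budget, after which the copied excursion stays feasible (budget depletes monotonically, by Property~\ref{prop_charac}, and never drops below $\maxb-\bud^*-O(\|\x-\x'\|)\ge0$). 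Following it with an optimal tail from $\x^*$ gives a feasible path from $\x$ of cost $c^*+\vals(\x^*)+O(\|\x-\x'\|)=\valu(\x',\maxb)+O(\|\x-\x'\|)$, whence $\vals(\x)\le\valu(\x',\maxb)+O(\|\x-\x'\|)$, and letting $\x'\to\x$ closes this bound.

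The main obstacle is the degenerate case $\bud^*\uparrow\maxb$, in which the optimal first excursion consumes essentially the whole budget before its first reset; then the prepended leg $\x\to\x'$ already spends $O(\|\x-\x'\|)$ of budget and pushes the copied excursion over $\maxb$ precisely as it returns to $\safe$. This is where the isotropy hypothesis $\fB(\x,\control)=\control f(\x),\ \|\control\|=1$ is used: since admissible paths are then reversible and direction-independent, I would re-anchor the excursion at the \emph{full-budget} safe endpoint rather than prepend a leg into $\nsafe$, so that the monotone depletion begins from $\maxb$ instead of from $\maxb-O(\|\x-\x'\|)$. I expect the delicate part of the write-up to be controlling the two residual $O(\|\x-\x'\|)$ discrepancies simultaneously — the leftover budget $\maxb-\bud^*$ and the mismatch between $\x^*$ and its transplanted image — which I would absorb in the limit $\x'\to\x$ using the small-time controllability constants $F_1,F_2$, the Lipschitz continuity of $f,\runcost,\runcostb$, and the local Lipschitz continuity of $\vals$ near $\x$ from Lemma~\ref{cont_lemma}. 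Combining the two bounds yields $\lim_{\x'\to\x}\valu(\x',\maxb)=\vals(\x)$, i.e.\ the compatibility condition \eqref{comp_1} at $\bud=\maxb$.
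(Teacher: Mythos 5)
Your overall architecture (two one-sided bounds, with the upper bound $\limsup_{\x'\to\x}\valu(\x',\maxb)\le\vals(\x)$ recycled from \eqref{goal_1}) matches the paper, and your direct construction for the lower bound is fine in the non-degenerate case. But the degenerate case $\bud^*\to\maxb$ is precisely where your argument has a genuine gap, and that case is the whole content of the proposition. ``Re-anchoring the excursion at the full-budget safe endpoint'' is not a construction: read one way (using reversibility) it produces a path from $\x^*$ back to $\x'$, which travels in the wrong direction and cannot be used to bound $\vals(\x)$; read as ``transplant the excursion so that it starts at $\x\in\safe$,'' the transplanted excursion still consumes $\bud^*+O(\|\x-\x'\|)$ of budget before its first reset (the Lipschitz perturbation of $\runcostb$ accumulated over a time interval of length up to $\maxb/\runcostb_1$ does not disappear just because the new start point is safe), so it is infeasible whenever $\bud^*$ is within $O(\|\x-\x'\|)$ of $\maxb$. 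More fundamentally, your plan to ``absorb the two residual $O(\|\x-\x'\|)$ discrepancies in the limit'' conflates cost errors with feasibility violations: a competitor path that overshoots the budget by $O(\|\x-\x'\|)$ does not have cost $\valu(\x',\maxb)+O(\|\x-\x'\|)$ --- it has cost $+\infty$ and contributes nothing to an upper bound on $\vals(\x)$. A hard state constraint cannot be absorbed asymptotically.

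The paper avoids this entirely by arguing via compactness rather than explicit surgery. It takes $\x_j\to\x$ with optimal feasible paths $\y_j^*$; Lemma \ref{bound_lemma} bounds their durations, assumption \ref{stc} gives equicontinuity, and Arzela--Ascoli yields a uniformly convergent subsequence with limit $\y^*$ starting at $\x$ itself. The key observation is that the first time $T^*$ at which $\y^*$ reaches $\partial\safereach$ satisfies $T^*\le\liminf_j T_j^*$ (the limit path can only hit the closed set $\partial\safereach$ \emph{earlier} than the approximants), whence $\int_0^{T^*}\runcostb(\y^*(s))\,ds\le\lim_j\int_0^{T_j^*}\runcostb(\y_j^*(s))\,ds\le\maxb$: feasibility is inherited exactly in the limit, with no $O(\|\x-\x'\|)$ slack to pay for, and the cost inequality $\J(\x,\control^*)\le\lim_j\valu(\x_j,\maxb)$ then follows from lower semi-continuity of $q$. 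Isotropy is what guarantees that the uniform limit of admissible trajectories is again realizable as an admissible trajectory of no greater cost. To salvage a constructive proof you would need a genuine budget-saving modification of the near-optimal excursion (strictly shortening its unsafe portion at a cost penalty vanishing as $\x'\to\x$), and since $F_2$ may greatly exceed $F_1$ the obvious straight-line shortcuts do not save budget; I do not see how to complete that route without effectively reproducing the compactness argument.
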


\begin{proof}
Fix $\x\in\partial\safe_{\reach}\backslash \boundary$.
We prove that $\vals(\x) - \lim\limits_{\x'\rightarrow\x}\valu(\x',\maxb) \le 0$.
Note that \eqref{comp_1}
follows, since the proof of \eqref{goal_1}
also covered the current case ($\bud=\maxb$).

Take a sequence $\{ \x_j \} \subset \nsafe$ converging to $\x$, such that $\valu(\x_j,\maxb)$ is finite
for each $j$.
Suppose for each $j$ that $\control^*_j\in\tilde\A(\x_j,\maxb)$ is an optimal feasible control
and $\y_j^*$ is the corresponding feasible path such that $T_j = \min\{t \mid \y_j^*(t)\in
\boundary\}$.
Let $T_{\max} = \sup_j\{T_j\}$ and for each $j$, set $\y^*_j(t) = \y^*_j(T_j)$
for $t\in[T_j,T_{\max}]$.
Note that by Lemma~\ref{bound_lemma}, $T_{\max}$ is finite.
Also, condition \ref{stc} yields uniform boundedness and equi-continuity of
the paths $\y_j^*$.
Therefore, the Arzela-Ascoli theorem ensures that (upon reindexing in $j$) a subsequence
$\y^*_{j}$ converges uniformly to a path $\y^*$ in $\cdomain$ corresponding to some
admissible control $\control^*\in\A$.

Next we show that $\control^*\in\tilde\A(\x',\maxb)$, i.e. $\y^*$ is a feasible path.
Define $T_j^*$ (and $T^*$) to be the first
instance
in time
when $\y^*_j$
(and, respectively, $ \y^*$) reaches $\partial\safe_\reach$.
Let $\tilde T$ be the infimum limit of the sequence $\{T^*_j\}$ and consider its subsequence
such that, upon reindexing in $j$, $\tilde T = \lim\limits_{j\to \infty} T^*_j$.
Since $\partial\safe_\reach$ is closed,
$
\y^*\left(\tilde T\right) = \lim\limits_{j\rightarrow \infty}\y^*_j(T^*_j)
\in\partial\safe_\reach,
$
and thus $\tilde T \ge T^*$.
This implies that
$
\lim\limits_{j\to \infty}\int_{T^*}^{T^*_j} \runcostb(\y_j^*(s)) ds \ge 0.
$
Using this observation and the Lipschitz continuity of $\runcostb$, we have,
\[
\begin{aligned}
\int_0^{T^*} \runcostb(\y^*(s)) ds - \int_0^{T_j^*} \runcostb(\y_j^*(s)) ds
&= \int_0^{T^*} \runcostb(\y^*(s)) - \runcostb(\y_j^*(s)) ds - \int_{T^*}^{T_j^*} \runcostb
(\y_j^*(s)) ds \\
&\le \int_0^{T^*} \left|\runcostb(\y^*(s)) - \runcostb(\y_j^*(s))\right| ds - \int_{T^*}^{T_j^*}
\runcostb(\y_j^*(s)) ds\\
&\le L  \int_0^{T^*} \left\|\y^*(s) - \y_j^*(s)\right\| ds - \int_{T^*}^{T_j^*} \runcostb(\y_j^*(s))
ds,
\end{aligned}
\]
where $L$ is the Lipschitz constant for $\runcostb(\cdot)$.
Since the last expression has a non-positive limit as $j\to\infty$
and each $\y_j^*$ is feasible, this shows that on the interval $[0,T^*]$
the trajectory $\y^*$ is feasible as well:
\[
\int_0^{T^*} \runcostb(\y^*(s)) ds \le \lim_{j\rightarrow\infty} \int_0^{T_j^*} \runcostb(\y_j^*
(s)) ds \le \maxb.
\]
If $\y^*(t)$ does not remain in $\safe_\reach$ after $t=T^*$, a similar argument can be used
to prove the feasibility of all other ``unsafe segments'' of the trajectory.
(Alternatively, appending the optimal trajectory corresponding to $\y^*(T^*) \in \safe_\reach$
yields another feasible trajectory, which is at least as cheap with regard to
the primary running cost $\runcost$.)

Applying the same argument to the total running cost \eqref{costfunc} and recalling that
$q$ is lower semi-continuous, we obtain
$
\J(\x,\control^*)\le \lim\limits_{j\rightarrow\infty}\valu(\x_{j},\maxb).
$
This completes the proof since, by definition of the value function,
$\vals(\x) \le \J(\x,\control^*).$
\end{proof}

\section{Numerical methods for continuous budget reset problems}\label{ss:Numerics}

Throughout this section, we assume the following setup: let $\grid$ be a set of
gridpoints on the domain $\bar{\domain}$.
While a similar formulation can be constructed for arbitrary unstructured meshes,
we restrict our description to a uniform Cartesian grid $\grid$ with grid spacing $\h$.
We denote $\grid_\nsafe = \grid\cap\nsafe$ and $\grid_\safe = \grid\cap\safe$.
To simplify the treatment of boundary conditions, we assume that
$\boundary$ and $\interface$ are well-discretized by the gridpoints
in $\del\grid\subset\grid$ and in $\partial \grid_\safe$ respectively.
We also assume that the set of allowable budgets $[0,\maxb]$,
is discretized into equispaced intervals partitioned by gridpoints
$\Bd =  \{ \bud_j = j \Delta\bud \mid j=0,1,\dots,\Nb\}$,
where $\Delta\bud >0$ is fixed ahead of time.

\subsection{Discretization of the unconstrained case.}\label{ss:hjb_semilag}

To begin, we briefly discuss the usual semi-Lagrangian discretization techniques
for static HJB equations of the form~\eqref{hjbeqn}.
Denote $\Valn(\x)$ to be the numerical approximation to $\val(\x)$
at the gridpoint $\x\in\grid$.
Suppose the current system state is $\x \in \grid \cap \domain$
and the constant control value $\control$ is to be used for a short time $\tau>0$.
Assuming that $K$ and $\fB$ are locally constant, the new position is approximated by
$\x_{\control}(\tau) = \x + \tau\fB(\x,\control)$ and the approximate accumulated
transition cost is $\runcost(\x,\control) \tau$.
For small $\tau$, this yields a first-order 
{\em semi-Lagrangian} discretization
\bq\label{semilag1}
\begin{aligned}
\Valn(\x) &= \min_{\control\in A}\{\tau \runcost(\x,\control) + \Valn(\x_{\control}(\tau)) \},
&\text{for all }\x\in\grid \backslash \del\grid\\
\Valn(\x) &= \term(\x),&\text{for all } \x\in\del\grid,
\end{aligned}
\eq
of Bellman's optimality principle \eqref{dpp_1}.
Since $\x_{\control}(\tau)$ is usually not a gridpoint, $\Valn(\x_{\control}(\tau))$ needs
to be interpolated using adjacent grid values.
Many different variants of the above scheme result from different choices of $\tau$.
Falcone and co-authors have extensively studied the discretized systems\footnote{Some
of the papers cited in this subsection have considered related but slightly different PDEs,
including those for finite-horizon and infinite-horizon optimal control,
but the fundamental idea remains the same.}
which use the same $\tau>0$ for all $\x$ and $\control$;
see \cite{BardiFalcone, Falc_Dial, Falcone_InfHor} and
higher-order accurate versions in \cite{FalconeFerretti}.
Alternatively, $\tau$ can be chosen for each $\control$
to ensure that $\x_{\control}(\tau)$ lies on some
pre-specified set near $\x$.
For example, in a version considered by Gonzales and Rofman \cite{GonzalezRofman},
the motion continues in the chosen direction until reaching
the boundary of an adjacent simplex.  E.g., on a Cartesian grid in $\R^2$,
if $\x_s$ and  $\x_w$ are two gridpoints adjacent to $\x$
and $\fB(\x,\control)$ is some
southwest-ward direction of motion, then $\tau$ is chosen to ensure that
$\x_{\control}(\tau)$ lies on a segment $\x_s \x_w$, and
$\Valn(\x_{\control}(\tau))$ is approximated by a linear interpolation between
$\Valn(\x_s)$ and $\Valn(\x_w)$.
Interestingly, in the case of geometric dynamics \eqref{geometric_dynamics},
this type of semi-Lagrangian scheme is also equivalent to
the usual Eulerian (upwind finite difference) discretization.
A detailed discussion of this for isotropic problems on grids
can be found in \cite{Tsitsiklis} and for general anisotropic
problems on grids and meshes in \cite[Appendix]{SethVlad3}.
In addition, both types of semi-Lagrangian schemes
can be viewed as controlled Markov processes on $\grid$
\iffullversion
(indeed, as SSP problems, discussed in section \ref{s:SSP})
\fi
;
this earlier approach was pioneered by Kushner and Dupuis in
\cite{KushnerDupuis}; see also a more recent discussion in \cite{VladMSSP}
on applicability of label-setting algorithms.

We note that \eqref{semilag1} is a large coupled system of non-linear equations.
If $\Psi$ is an upper bound on $\Valn$,
this system can be in principle solved by fixed point iterations
starting from
an initial guess $\Valn = \Psi$ on $\grid \backslash \del\grid.$
However, this approach can be computationally expensive, and
an attractive alternative is to develop a Dijkstra-like non-iterative algorithm.
For the fully isotropic case, two such methods are
Tsitsiklis' Algorithm \cite{Tsitsiklis} and Sethian' Fast Marching Method
\cite{SethFastMarcLeveSet}.
An overview of many (isotropic) extensions of this approach can be found
in \cite{SethSIAM}.  Ordered Upwind Methods \cite{SethVlad2,SethVlad3}
have similarly handled the anisotropic case; a recent efficient
modification was introduced in \cite{AltonMitchell_MAOUM}.
Similar fast methods were introduced for several related PDEs
by Falcone and collaborators \cite{CristianiFalcone_FMM_Games, CCF, CFF, CFFM}.

An alternative approach is to speed up the convergence of iterative
solver for \eqref{semilag1} by using Gauss-Seidel iterations with
an alternating ordering of gridpoints.  Such ``Fast Sweeping'' algorithms
\cite{BoueDupuis,TsaiChengOsherZhao, Zhao, KaoOsherQian} 
are particularly efficient when the direction of characteristics does not change
too often.  A comparison of various non-iterative and fast-iterative
approaches (as well as a discussion of more recent hybrid algorithms)
can be found in \cite{ChaconVlad_FMSM}.

We emphasize that, for the purposes of this paper,
any one of the above approaches can be used modularly,
whenever we need to solve equation \eqref{augPDE2}.
The discretization of \eqref{augPDE1} is explained 
in subsection \ref{sec:oopath}.
But before dealing with these technical details, subsection \ref{ss:iterative_brp} addresses the main computational challenge of budget reset problems: the a priori unknown boundary condition on $\interface$, which results in an implicit interdependence of gridpoints in $\domain_{\nsafe}$ and $\domain_{\safe}$.



\subsection{Iterative treatment of the budget reset problem}\label{ss:iterative_brp}

First, we note that the simpler case of $\domain=\nsafe$ (i.e.,
the constrained optimal control problem presented in section~\ref{ss:bcp})
can be solved by a single upward sweep in the $\bud$-direction, as described in \cite{KumarVlad}.
This is a direct consequence of the explicit
causality property of the value function when $\runcostb$ is strictly positive on $\domain$.
Moreover, without budget resets, relaxing $\runcostb$ to be non-negative (i.e.
introducing a safe subset $\safe$ where $\runcostb=0$)
still yields semi-implicit causality; see remark \ref{rem:semi-implicit}.

In contrast, the introduction of budget resets on a safe subset $\safe\subset
\domain$, breaks this causal structure. If the values on $\interface$ were a priori known,
we could efficiently solve \eqref{augPDE2} on $\safe$ by either
Marching or Sweeping techniques at least in the case of geometric dynamics.
But since the values on $\interface$ are not provided, there are no known
non-iterative algorithms to numerically solve
this problem on $\rdomain$.
Therefore, we propose to solve the PDEs \eqref{augPDE1} and \eqref{augPDE2} (with
boundary conditions and the compatibility condition \eqref{comp_1}) by an alternating
iterative process.
We construct a sequence of functions $\valu^k$ and $\vals^k$ for $k=0,1,2,\dots$, which
converge to $\valu$ and $\vals$ respectively, as $k \to \infty$.

We begin with a recursive definition for these new functions on
$\domain_{\nsafe}$ and $\domain_{\safe}$.
Of course, the actual implementation in section \ref{ss:mfl} relies on their numerical approximations;
the resulting method is summarized in Algorithm \ref{alg:budgetReset}.
Initially, set
\[
\begin{aligned}
&\valu^0(\x,\bud) = \infty &(\x,\bud) \in \udomain,\\
&\vals^0(\x) = \infty  &\x\in\safe.\\
\end{aligned}
\]
Then for $k = 1,2,\dots$,
\begin{description}

\item[Phase I]
Find $\valu^{k}$ as the viscosity solution of equation \eqref{augPDE1}
with boundary conditions
\bq\label{w1_bc}
\valu^{k}(\x,\bud) =
\begin{cases}
\term(\x) & \x\in\boundary, \bud\in(0,\maxb]\\
\liminf\limits_{\x'\rightarrow\x\atop\x'\in\safe}
\vals^{k-1}(\x') & \x\in\interface,  \bud\in(0,\maxb]\\
\infty & \x \in \nsafe, \bud = 0.
\end{cases}
\eq

\item[Phase II]
Find $\vals^{k}$ as the viscosity solution of equation \eqref{augPDE2}
with boundary conditions
\bq\label{w2_bc}
\vals^{k}(\x) =
\begin{cases}
\term(\x) & \x \in \boundary\\
\liminf\limits_{\x'\rightarrow\x\atop\x'\in\nsafe}
\valu^{k}(\x',\maxb)&  \x \in\interface.
\end{cases}
\eq

\end{description}

\iffullversion
\begin{remark}\label{rem:gamma_interp}
\fi
We note that the $\liminf$'s in the above definition are primarily for the sake
of notational consistency (since solving \eqref{augPDE2} on $\text{int}(\safe)$
does not really specify $w_2^k$'s values on $\interface \subset \del \safe$.
Alternatively,
we can solve the PDE on $\safe$, treating boundary conditions on $\interface$
``in the viscosity sense'' \cite{BardiDolcetta}.
This is essentially the approach used in our numerical implementation.
\iffullversion
\end{remark}
\fi


Intuitively, $\valu^k$ and $\vals^k$ can be interpreted
as answering the same question as $\valu$ and $\vals$,
but with an additional constraint that no trajectory is allowed to
reset the budget (by crossing from $\nsafe$ to $\safe$) more than $(k-1)$ times.
As a result, for many problems convergence is attained (i.e., $\valu^k = \valu$
and $\vals^k = \vals$) after a finite number of recursive steps.
E.g., in the simplest case where $\runcost$ and $f$ are constant on $\domain$,
$\runcostb>0$ is constant on $\nsafe$, and all connected components of
$\safe \backslash \boundary$ are convex, then
any optimal trajectory might enter each connected component at most once.
See table \ref{tab:convergence} for the experimental confirmation of this phenomenon.

\subsection{Discretization of $\vals^k$, $\valu^k$}\label{sec:oopath}

Let $\Valu^k(\x,\bud_j)$ be an approximation of
$\valu^k(\x,\bud_j)$ for all $(\x,\bud_j)\in \grid_{\nsafe} \times\Bd$;
similarly, let $\Vals^k(\x)$ be an approximation of
$\vals^k(\x)$ for all $\x \in \grid_{\safe}$.
The ``natural'' boundary conditions are implemented as follows:
\begin{align}\label{num_bc1}
\Valu^k(\x,\bud_j) &= \term(\x), & \x\in\del\grid\cap\bar{\nsafe}, \bud_j\in\Bd, \\ \label
{num_bc2}
\Vals^k(\x) &= \term(\x), & \x\in\del\grid\cap\safe.
\end{align}
Additional boundary conditions on $\interface$ stem from the recursive definition of
$\vals^k$ and $\valu^k$ (yielding the compatibility condition \eqref{comp_1} in the limit).
In Phase I, we use
\bq
\label{num_c1}
\Valu^k(\x,\bud_j) = \Vals^{k-1}(\x), \quad \x\in\grid_\safe, \bud_j\in\Bd,
\eq
and then solve the discretized system \eqref{semiLag_b} on the relevant
subset of $\grid_\nsafe \times \Bd$; see the discussion below and in section
\ref{ss:mfl}.

In Phase II, the numerical compatibility condition is enforced
on the set of gridpoints\\
$\grid^{\interface}_{\nsafe} =
\{ \x \in \grid_{\nsafe} \mid \x \text{ is adjacent to some } \x' \in \safe \}.$
We set
\bq
\label{num_c2}
\Vals^k(\x) = \Valu^{k}(\x,\maxb), \quad \x\in \grid^{\interface}_{\nsafe},
\eq
and then recover $\Vals^k$
by solving the system of equations equivalent to \eqref{semilag1}
on the entire $\grid_\safe$ (including on $\grid \cap \interface$).
As explained in subsection \ref{ss:hjb_semilag}, this can
be accomplished by many different efficient numerical algorithms.



To derive the system of equations defining $\Valu^k$ on $\grid_\nsafe \times \Bd$,
we adapt the approach introduced in \cite{KumarVlad}.
Property \ref{prop_charac} is fundamental for the method's efficiency:
the characteristic curves emanating from $\del \safereach$ all move in
increasing direction in $b$.
Thus, we only need a single `upward' sweep in the $b$ direction to capture the value
function along the characteristics.
We exploit this result in the semi-Lagrangian framework as follows.
For $\x\in\grid\cap\nsafe$, $\bud_j\in\Bd$, $\tau>0$, write
$\x_{\control}(\tau) = \x + \tau\fB(\x,\control)$ and
$\bud_{\control}(\tau) = \bud_j - \tau\runcostb(\x,\control)$.
If we choose
$
\tau=\tau_{\control,\x} =
(\Delta\bud) / \runcostb(\x,\control),
$
this ensures that $\bud_{\control}(\tau) = \bud_{j-1}$, and
the semi-Lagrangian scheme at $(\x, \bud_j)$ becomes
\bq\label{semiLag_b}
\Valu^k(\x,\bud_j) = \min_{\control\in A}\left\{ \tau \runcost(\x,\control) + \Valu^k(\x_
{\control}(\tau), \bud_{j-1})\right\}.
\eq
For each $j=1,2,\dots$, we solve \eqref{semiLag_b} for
$\Valu^k(\x,\bud_j)$, at each $\x\in\grid_\nsafe$
based on the (already computed) $\Valu^k$ values
in the $b_{j-1}$ resource-level.

For an arbitrary control value $\control\in A$, the point $\x_{\control}(\tau)$ usually
is not a gridpoint; so,
$\Valu^k(\x_{\control}(\tau), \bud_{j-1})$ has to be approximated using values at the nearby
gridpoints (some of which may be in $\grid_{\safe}$).
Since our approximation of $\x_{\control}(\tau)$ is first-order accurate,
it is also natural to use a first-order approximation for its $\Valu^k$ value.
Our implementation relies on a bilinear interpolation. E.g., for $n=2$,
suppose the gridpoints $\x_1,\x_2,\x_3,\x_4\in\grid$ are the four corners of the grid cell
containing $\x_{\control}(\tau)$,
ordered counter-clockwise with $\x_1$ on the bottom left corner.
If $(\gamma_1,\gamma_2) = (\x_{\control}(\tau) - \x_1)/h$, then the bilinear
interpolation yields
\[
\begin{aligned}
\Valu^k(\x_{\control}(\tau),\bud_{j-1}) &=
\gamma_1\left( \gamma_2 \mathcal{\Val}^k(\x_3,\bud_{j-1}) +
(1-\gamma_2) \mathcal{\Val}^k(\x_2, \bud_{j-1}) \right) \\
&+ (1-\gamma_1)\left(\gamma_2 \mathcal{\Val}^k(\x_4,\bud_{j-1}) +
(1-\gamma_2) \mathcal{\Val}^k(\x_1,\bud_{j-1})\right),
\end{aligned}
\]
where $\mathcal{\Val}^k(\x,\bud) = \Valu^k(\x,\bud)$ if $\x \in \grid_{\nsafe}$
and $\mathcal{\Val}^k(\x,\bud) = \Vals^{k-1}(\x)$ if $\x \in \grid_{\safe}$.
The resulting approximation is inherently continuous, while $\valu$ usually is not.
The issue of convergence of semi-Lagrangian schemes to discontinuous viscosity solutions is
discussed in Remark \ref{rem:convergence}.

\iffullversion
\begin{remark}\label{rem:valueiter}
The direct discretization of equations \eqref{augPDE1} and \eqref{augPDE2}
could also be interpreted as defining the value function of the corresponding
SSP problem on $(\grid_{\nsafe} \times\Bd) \cup \grid_{\safe}.$
In this framework, the iterative algorithm presented in this section can be viewed
as a Gauss-Seidel relaxation of value iterations on $\grid_{\nsafe} \times\Bd$
alternating with a Dijkstra-like method used on $\grid_{\safe}.$
\end{remark}
\else
The direct discretization of equations \eqref{augPDE1} and \eqref{augPDE2}
could also be interpreted as defining the value function of the corresponding
{\em Stochastic Shortest Path} problem on $(\grid_{\nsafe} \times\Bd) \cup \grid_{\safe};$
see Section 3 in \cite{Unsafe_full_version}.
In this framework, the iterative algorithm presented in this section can be viewed
as a Gauss-Seidel relaxation of value iterations on $\grid_{\nsafe} \times\Bd$
alternating with a Dijkstra-like method used on $\grid_{\safe}.$
\fi

\subsection{Domain restriction techniques}\label{ss:mfl}

The value function $\valr(\x,\bud)$ is infinite at points that are not reachable within budget
$\bud$.
Since the dimension ($n+1$) of the domain where $\valu$ is solved is typically large,
a reduction of the computational domain to its reachable subset usually yields
substantial saving in computational time.
In \cite{KumarVlad} such a reduction was achieved by an efficient preprocessing step, which
involved computing the \emph{minimum feasible level}, i.e. the interface that
separates the reachable and unreachable parts of $\edomain$.
Here we use a similar technique
to find the ``lower'' boundary of the reachable subset of $\domain_\nsafe$.

Note that, by Property \ref{prop_mono}, for any $\x\in\nsafe$,
$\valu(\x,\bud_1) < \infty$ implies $\valu(\x,\bud) < \infty$ for all $\bud\in[\bud_1,\maxb]$;
and $\valu(\x,\bud_2) = \infty$ implies $\valu(\x,\bud) = \infty$ for all $\bud\in[0, \bud_2]$.
We formally define the minimum feasible level (MFL)
in the unsafe set as the graph of
\bq\label{mfl_defn}
\mfl(\x) = \mfl[\valu](\x) =\min\{\bud \mid \valu(\x,\bud) < \infty\},\quad \x\in\nsafe,
\eq
and in the safe set $\safe$, as a graph of
\bq\label{mfl_bc2}
\mfl(\x) =
\begin{cases}
0 & \x\in\safereach\\
\infty & \x\in\safe\backslash\safereach.
\end{cases}
\eq

The goal is to recover the MFL from some cheaper (lower-dimensional) computation.
We note that on $\nsafe$, $v(\x)$ can be interpreted as the value function
of another resource-optimal control problem, and as such it can be recovered
as the viscosity solution of a similar HJB equation
\bq\label{mfl_hjb}
\min_{\control\in A}\left\{ \runcostb(\x,\control) + \grad \mfl (\x) \cdot \fB(\x,\control) \right\} =
0,\quad \x \in\nsafe,
\eq
coupled with the boundary conditions \eqref{mfl_bc2}.
We note that
\iffullversion
the algorithm described in Appendix can be used to identify $\safe_\reach$
by an iterative process on $\cdomain$ (rather than on the higher dimensional $\udomain \cup \sdomain$).
\else
$\safe_\reach$ could be identified
through an iterative process on $\cdomain$
(rather than on the higher dimensional $\udomain \cup \sdomain$)
\cite{Unsafe_full_version}.
\fi
So, in principle,
$\mfl (\x)$ can be fully computed without increasing the dimensionality of
$\cdomain$.
However, to use the MFL as the ``lowest budget'' boundary condition for \eqref{augPDE1},
we also need to know the values of $\valu$ on the MFL.
This corresponds to the ``constrained optimal'' cost $\mflv$ along resource-optimal
trajectories defined by $\mfl$; see Table \ref{tab1} for a similar example in the discrete setting.
The function $\mflv$ is formally defined below; here we note that it can also be
computed in the process of solving \eqref{mfl_hjb} provided $\vals$ is
a priori known on $\partial\safe_\reach$.
This is, indeed, the case when no resets are possible; i.e.,
$\nsafe=\domain$ and $\safe_\reach \subset \safe = \boundary$,
precisely the setting previously considered in \cite{KumarVlad}.
Unfortunately, for the general case ($\nsafe\ne\domain$),
we do not know of any (fast, lower-dimensional) algorithm to compute
$\vals$ on $\partial\safe_\reach$.
(Note that in a similar discrete example depicted in Figure \ref{ex5_iterative},
the values of the safe nodes continue changing until the last iteration.)
Instead, we proceed to recover the values on the MFL iteratively, using values of $\vals^k$
on $\partial\safe_\reach$.

To describe this iterative process, we first define the $k$-th approximation of the reachable
subset of $\safe$:
$ \qquad
\safe_\reach^k = \{\x\in\safe \mid \vals^k(\x) < \infty\}.
$
Then, $\text{MFL}^k$, the $k$-th approximation of the MFL, is a graph of
$
\mfl^k(\x) = \mfl[\valu^k](\x),
$
which
can be computed by solving \eqref{mfl_hjb} with boundary conditions $\eqref{mfl_bc2}$
where $\safe_\reach$ is replaced by $\safe_\reach^{k-1}$.
The numerical approximation $\Mfl^k(\x)$ can be efficiently computed
using the methods discussed in section \ref{ss:hjb_semilag}.

Once $\mfl^k$ is known, we can define the subset of $\nsafe$ that is reachable at the $k$-th
iteration as\\
$
\nsafe_\reach^k = \left\{\x \in\nsafe \mid \mfl^k(\x) \le \maxb\right\},
$
and
a function $\mflv^k\colon\nsafe_\reach^k\rightarrow \R$ as
\bq\label{defn_uk}
\mflv^k(\x) =
\begin{cases}
\valu^k(\x,\mfl^k(\x)),& \x\in\nsafe_\reach^k, \\
\vals^{k-1}(\x), & \x\in\partial\nsafe.
\end{cases}
\eq
Since we intend to use $\mflv^k$ as a ``lower boundary'' condition for $\valu^k$
, we must compute $\mflv^k$ using only the information derived
from $\vals^{k-1}$, already computed at that stage of the algorithm.
For this purpose, it is possible to represent $\mflv^k$ as a value function of another
related control problem on $\nsafe_\reach^k$.

Let $T= T(\x, \bud, \control) = \min\{t \mid \y(t)\in\safereach^{k-1}\}$.
Define $\tilde\A^k(\x)$ to be the set of all ``$\mfl^k$-optimal'' controls; i.e., the
controls which lead from $\x$ to $\safereach^{k-1}$ through $\nsafe_\reach^k$
using exactly $\mfl^k(\x)$ in resources.
For most starting positions $\x \in \nsafe_\reach^k$, this set will be a singleton,
but if multiple
feasible controls are available, their corresponding primary costs can be quite different.
Then $\mflv^k$ can be characterized as
\bq\label{dpp_prop}
\mflv^k(\x) =
\inf_{\control(\cdot)\in\tilde\A^k(\x) } \int_0^{T}\runcost(\y(t),\control(t))\ dt +
\vals^{k-1}(\y(T)).
\eq
By Bellman's optimality principle, \eqref{dpp_prop} yields
\bq\label{oo_bellman}
\mflv^k(\x) = \lim_{\tau\rightarrow 0^+}\min_{\control\in A^k(\x)}\{\tau \runcost(\x,\control) +
\mflv^k(\x + \tau\fB(\x,\control))\},
\eq
where $A^k(\x)\subset A$ is the set of minimizing control values in \eqref{mfl_hjb}.
If $\mflV^k(\x)$ is the approximation to $\mflv^k(\x)$ at a gridpoint $\x\in\grid_\nsafe$,
a natural semi-Lagrangian scheme based on \eqref{oo_bellman} is
\bq\label{semiLagV}
\mflV^k(\x) = \min_{\control\in A^k(\x)}\{\tau \runcost(\x,\control)
+ \mflV^k(\x + \tau\fB(\x,\control))\},
\qquad \x \in \nsafe_\reach^k,
\eq
where $\mflV^k(\x + \tau\fB(\x,\control))$ is interpolated, and the corresponding boundary
condition is
\bq\label{semiLagV_bc}
\mflV^k(\x) = \Vals^{k-1}(\x)\quad \x\in\grid \cap \safe_\reach^{k-1}.
\eq
Since the set $A^k(\x)$ has to be found when solving \eqref{mfl_hjb},
it is also natural to solve \eqref{semiLagV} at each gridpoint as soon as its
$\Mfl^k$ becomes available.


As discussed above, $\mflV^k$ acts as a numerical boundary condition on the surface $\bud=
\Mfl^k(\x)$ for the update scheme \eqref{semiLag_b}.
However, in general, $\Mfl^k(\x)\not\in\Bd$.
In our implementation, we set $\Valu^k(\x,\bud_j) = \mflV^k(\x)$, where $j$ is the
smallest integer such that $\Mfl^k(\x)\le j\Delta\bud$. This introduces additional
$O(\Delta b)$ initialization errors at the MFL.  An alternative (more accurate)
approach would require using cut-cells when interpolating near the MFL.

\noindent
The resulting iterative method is summarized in Algorithm~\ref{alg:budgetReset}.
\begin{algorithm}

\textbf{Initialization:}\\
$\Valu^0(\x,\bud) := \infty,\qquad \Forall\x\in\grid_\nsafe, \bud\in\Bd; $\\
$\Vals^0(\x) := \infty, \qquad \Forall\x\in\grid_\safe \backslash \boundary;$\\
$\Vals^0(\x) := q(\x), \qquad \Forall\x\in\boundary;$\\
\vspace{0.2cm}
Compute $\Valn(\x)$ for all $\x\in\grid$;
\vspace{0.4cm}

\textbf{Main Loop:}\\
\BlankLine
\ForEach{$k=1,2,\dots$ until $\Valu^k$ and $\Vals^k$ stop changing}{
\vspace{0.2cm}

Using $\Vals^{k-1}$ to specify $\grid\cap\safe_{\reach}^{k-1}$,
	compute $\Mfl^k$ and $\mflV^k$ for each $\x \in \grid_\nsafe$.\\

\vspace{0.2cm}

\textbf{Phase I:}\\
\ForEach{$\bud = \Delta\bud,\ 2\Delta\bud,\dots,\maxb$}{
	\BlankLine
	\ForEach{$\x \in \grid_\nsafe$}{
		\BlankLine
		\If{$b \geq \Mfl^k(\x)$} {
			\BlankLine
			\eIf{$b < (\Mfl^k(\x) + \Delta\bud)$} {
				\BlankLine
				$\Valu^k(\x,\bud) := \mflV^k(\x)$;
				\BlankLine
			}{
				\eIf{$\Valu^k(\x,\bud-\Delta\bud) = \Valn(\x)$} {
					\BlankLine
					$\Valu^k(\x,\bud) := \Valn(\x)$;
			    	\BlankLine
				}{
					\BlankLine
					Compute $\Valu^k(\x,\bud)$ from equation \ref{semiLag_b};
					\BlankLine
				}
			}	
		
		}
	}

}

\vspace{0.4cm}

\textbf{Phase II:}\\
Compute $\Vals^k$ on $\grid_\safe$;\\

}

\caption{The budget reset problem algorithm.}
\label{alg:budgetReset}
\end{algorithm}

\noindent
We note that the following properties
are easy to prove inductively using the comparison
principle \cite{BardiDolcetta} on the PDEs \eqref{augPDE1}, \eqref{augPDE2},
and \eqref{mfl_hjb}.


\begin{prop}\label{prop:monotone}
The iterative method
is monotone in the following sense:\\
\begin{enumerate}
\item
$\valr^{k+1}(\x,\bud) \le \valr^{k}(\x,\bud)$ for each $(\x,\bud)\in\rdomain$ and
$k=0,1,2,\dots$.
\item
$\safereach^{k}\subseteq\safereach^{k+1},$ for each $k=1,2,\dots$
\item
$\mfl^{k+1}(\x) \le \mfl^{k}(\x)$, for each $\x\in\domain$ and each $k=1,2,\dots$
\end{enumerate}
\end{prop}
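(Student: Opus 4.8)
The plan is to prove all three statements at once, by a single induction on $k$, with the engine being the fact that the solution of each boundary value problem \eqref{augPDE1}, \eqref{augPDE2}, \eqref{mfl_hjb} depends \emph{monotonically} on its Dirichlet data. This is precisely what the comparison principle of \cite{BardiDolcetta} delivers, once we observe that from one iterate to the next the Hamiltonian is unchanged and only the interface data (on $\interface$, or for the MFL on $\safe$) is modified through the coupling. The base case $k=0$ of statement (1) is immediate, since $\valr^0\equiv\infty$ forces $\valr^1\le\valr^0$.

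For the induction step I would assume $\valr^k\le\valr^{k-1}$, i.e. $\valu^k\le\valu^{k-1}$ on $\udomain$ and $\vals^k\le\vals^{k-1}$ on $\safe$. First I run Phase I: the functions $\valu^{k+1}$ and $\valu^k$ solve the \emph{same} PDE \eqref{augPDE1} and carry identical data $\term$ on $\boundary$ and $+\infty$ on $\{\bud=0\}$; the only difference is the interface term in \eqref{w1_bc}, namely $\liminf_{\x'\to\x,\,\x'\in\safe}\vals^{k}(\x')$ versus $\liminf_{\x'\to\x,\,\x'\in\safe}\vals^{k-1}(\x')$. Since $\vals^{k}\le\vals^{k-1}$ by hypothesis, this interface datum can only decrease, so the comparison principle yields $\valu^{k+1}\le\valu^k$ on $\udomain$. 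Restricting to $\bud=\maxb$ and feeding the result into the interface condition \eqref{w2_bc}, the same argument applied to \eqref{augPDE2} in Phase II gives $\vals^{k+1}\le\vals^k$ on $\safe$. Together these prove statement (1) at level $k+1$.

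Statement (2) is then immediate from statement (1): since $\vals^{k+1}\le\vals^k$ pointwise, the finiteness set can only grow, so $\safereach^{k}=\{\x\in\safe\mid\vals^k(\x)<\infty\}\subseteq\{\x\in\safe\mid\vals^{k+1}(\x)<\infty\}=\safereach^{k+1}$. For statement (3) I would again invoke the comparison principle, now on \eqref{mfl_hjb}: $\mfl^k$ and $\mfl^{k+1}$ solve the same equation but with boundary data \eqref{mfl_bc2} built from $\safereach^{k-1}$ and $\safereach^{k}$ respectively. Because statement (2) gives $\safereach^{k-1}\subseteq\safereach^{k}$, the set on which the datum equals $0$ enlarges while the $+\infty$ set shrinks, so the boundary data decreases and hence $\mfl^{k+1}\le\mfl^k$.

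The hard part will be making the comparison/monotonicity step rigorous for these \emph{discontinuous}, possibly $+\infty$-valued viscosity solutions with $\liminf$ interface conditions, rather than for classical continuous solutions. Two points need care: (i) \eqref{augPDE1} must be handled as a $\bud$-causal problem (Property \ref{prop_charac}), so that comparison is run in the increasing-$\bud$ direction with $\{\bud=0\}$ serving as initial data; and (ii) the monotone dependence on the $\liminf$ interface term should be checked directly — most cleanly via the control-theoretic representation, where lowering the exit cost on $\interface$ can only lower the infimum defining each value function, an equivalent route to the same inequalities. Once monotone dependence on the boundary data is established in this generalized-solution setting, the induction closes with no further calculation.
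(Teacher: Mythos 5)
Your proposal is correct and follows essentially the same route as the paper, which simply asserts that all three statements "are easy to prove inductively using the comparison principle on the PDEs \eqref{augPDE1}, \eqref{augPDE2}, and \eqref{mfl_hjb}" without writing out the details. Your induction on $k$, the monotone dependence of each boundary value problem on its interface data, and the fallback to the control-theoretic representation to handle the discontinuous, possibly infinite-valued solutions are exactly the intended argument, fleshed out.
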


\begin{remark}\label{rem:convergence}
We briefly discuss the convergence of $\Vals$ and $\Valu$ to $\vals$ and $\valu$,
respectively, as $\h, \Delta\bud\rightarrow 0$.
Under the listed assumptions,
the value function $\vals$ is Lipschitz continuous on $\text{int}(\safe)$
and
can be approximated by the methods described in section \ref{ss:hjb_semilag};
these approximations will induce errors depending on $\h$ only,
since $\Vals$ is approximated only in the top slice $\bud=\maxb$.
For example, standard Eulerian type (first-order upwind finite difference)
discretizations are $O(\h)$ accurate,
provided the solution has no rarefaction-fan-type singularities on the
influx part of the boundary.
(The latter issue is illustrated by the convergence test in section
\ref{ss:convergencetest}.)

On the other hand, the value function $\valu$ can be discontinuous on
$\udomain$ and a weaker type of convergence is to be expected as a result.
In the absence of resets (i.e., with
$\nsafe = \domain$),
if we
focus on any compact $\mathcal{K} \subset \udomain$ on which $\valu$ is
continuous, then the semi-Lagrangian scheme \eqref{semiLag_b} has been proven
to converge to $\valu$ on $\mathcal{K}$ uniformly, provided $\h = o(\Delta\bud)$
as $\h, \Delta\bud \rightarrow 0$; see \cite{BardiFalconeSoravia1,BardiFalconeSoravia2}.
To the best of our knowledge, there are no corresponding theoretical results for
convergence to discontinuous viscosity solutions of hybrid control problems
(e.g., equations \eqref{augPDE1}).  Nevertheless, the numerical evidence strongly
supports the convergence of described schemes (section \ref{ss:convergencetest}).
In \cite{KumarVlad} it was empirically demonstrated that without resets
the $L_1$-norm convergence (or $L_\infty$-norm convergence away from discontinuities)
can often be attained even with a less restrictive choice of $\h = O(\Delta\bud)$.
In section \ref{ss:convergencetest}, we show that this also holds true even if
budget resets are allowed.

Finally, we note two additional sources of ``lower boundary'' errors:
due to an approximation of the MFL and due to an approximation of $\valu=\tilde{u}$ on it;
the first of these is $O(\Delta \bud)$ while the latter is $O(\h)$.
\end{remark}

The optimal paths in $\rdomain$ can be extracted from the value function
in a manner similar to the description in section
\ref{ss:eikonal}.
The only additional computation is in the $\bud$-direction 
for the parts of trajectory in $\domain_{\nsafe}$:
for example, in the isotropic case, the budget $\budp$ along the optimal path
$\y^*$ decreases by $\runcostb(\y^*(t))$.
For each $(\x, b)$, the optimal control value $\control^*$ can be found either from an
approximation of $\nabla_{\x} \Valn$ or by solving the local
optimization problem similar to \eqref{semiLag_b}.
Once $\control^*$ is known,
the system \eqref{pathreset_U} can be integrated forward in time
by any ODE solver (our implementation uses Euler's method).








\section{Numerical Results}
\label{sec:numresults}


For the sake of simplicity, we will assume that
the dynamics is
isotropic
(i.e., $\fB(\x, \control) = \control f(\x)$), the primary running cost is $\runcost \equiv 1$
with the zero exit cost on the target
(making the value function equal to the total time along the optimal trajectory),
and $\runcostb \equiv 1$
(constraining the maximum contiguous time spent in $\nsafe$)
in all of the examples.
In addition to a numerical convergence test of Algorithm \ref{alg:budgetReset} (section \ref{ss:convergencetest}),
we will illustrate the effects of geometries of $\nsafe$, $\safe$,
spatial inhomogeneity of the speed $f$ and different maximum budgets $\maxb$.
For each example we show the level curves of the value
function at the top $\bud$-slice (i.e., $w(\x, \maxb)$).
In subsections \ref{ss:inhomog_tests} and \ref{ss:visibility_tests}, we also
show constrained-optimal paths, starting at some representative point $\x \in \nsafe$
with the maximum starting budget $b = \maxb$.
We emphasize that, for other starting budgets $b < \maxb$, the constrained-optimal paths
can be quite different, but all the data needed to recover them
is also a by-product of Algorithm~\ref{alg:budgetReset}.

The numerical solutions are computed on $\domain=[-1,1]^2$ discretized by a uniform $N\times N$ cartesian grid.
In all examples except for the convergence tests in section \ref{ss:convergencetest},
we use $N=300, \, h = 2/(N-1)$, and the budget direction is discretized with
$
\Delta \bud = \maxb / \text{round}\!\left(\frac{\maxb}{0.8 h}\right);
$
resulting in $N_\bud = |\B| = (\maxb / \Delta \bud) + 1 = O(1/h)$.
The main loop in Algorithm \ref{alg:budgetReset} was terminated when
both $\|\Valu^k-\Valu^{k-1}\|_\infty$ and $\|\Vals^k - \Vals^{k-1}\|_\infty$ fell below the
tolerance threshold of $10^{-8}$.

All tests were performed in Matlab (version R2010b)
with most of the implementation compiled into MEX files.
The tests were conducted on a 2.6 GHz MacBook computer under Mac OS X with 4 GB of RAM.
On average, the computations took approximately one minute of processing time,
but we emphasize that our implementation was not optimized for maximum efficiency.



\subsection{Convergence test}\label{ss:convergencetest}

We test the convergence of Algorithm \ref{alg:budgetReset}
with
$
\safe = \{(x,y)\in\domain\mid x \le 1/3\} \cup \boundary.
$
Assume isotropic, unit speed and costs $\fb(\x,\ba) = \ba$, $\runcost = \runcostb = 1$,
$A = S^1$, with maximum budget $\maxb = 1$.
We consider the case of a point ``target" $\target = (1,0)$ by choosing the boundary conditions
\bq
\valr(x,y,\bud) =
\begin{cases}
0 & (x,y)=\target\\
+\infty & (x,y)\in\boundary\backslash \{\target\}.
\end{cases}
\eq
Note that the problem is symmetric with respect to the $x$ axis; moreover,
an explicit formula for the exact solution can be derived from simple geometric considerations.
To simplify the notation, we define a few useful geometric entities on the domain
(see Figure \ref{fig:conv_toplevel} for a diagram):
\bq
\begin{aligned}
\Line &= \text{the vertical line segment at $x=\frac{1}{3}$ for $-\frac{\sqrt{5}}{3} \le y \le \frac{\sqrt{5}}{3}$}.\\
P_1, P_2 &= \text{the upper and lower end points of $\Line$, respectively.}\\
P(x,y) &= \begin{cases}P_1, &\text{ if $y\geq 0$;}\\  P_2, &\text{ otherwise.} \end{cases}
\end{aligned}
\eq
We shall only describe an optimal (feasible) path from an arbitrary point $\x=(x,y)$ to $\target$,
since $\valr(x,y,\bud)$ is simply the length of that path.
For convenience we will use the notation
``$\x\rightharpoonup\y$'' as a shorthand for ``a (directed) straight line segment from $\x$ to $\y$''.

We begin by describing the optimal path from $\x\in\safe$.
If $\x\rightharpoonup\target$ passes through $\Line$, this line segment is precisely the optimal path.
If the line does not pass through $\Line$, the optimal path is
$\x\rightharpoonup P(\x) \rightharpoonup \target$.
Next, we describe the optimal path from $\x \in \nsafe$ with initial budget $\bud$.
Clearly, if $\x\in\nsafe$ is more than $\bud$ distance from both $x=\frac{1}{3}$ and $\target$,
then $\valr(\x,\bud) = +\infty$.
Also, if $\x$ is within $\bud$ distance from $\target$, the optimal path is $\x\rightharpoonup\target$.
Otherwise, the optimal path will have to first visit $\safe$ (and reset the budget to $\maxb$),
before moving to $\target$.
This situation can be further divided into two cases:
\begin{enumerate}
\item[Case 1.]
if $\x\in\nsafe$ is within $\bud$ distance from $\Line$, the optimal path is to move from
$\x\rightharpoonup\y\rightharpoonup \target$, where $\y$ is a point on $\Line$ such that
$\dist{\x}{\y}\le b$ minimizing $\dist{\x}{\y} + \dist{\y}{\target}$.

\item[Case 2.]
The optimal path is $\x\rightharpoonup\z\rightharpoonup P(\x) \rightharpoonup\target$,
where $\z$ is the closest point on the line $x=\frac{1}{3}$ to $P(\x)$ such that $\dist{\x}{\z}\le b$.
\end{enumerate}
From the above, it should be clear that for each $b>0$ the value function
will have a discontinuous jump on
$\D(b) = \{(\x,\bud)\in\edomain \mid \x \in \nsafe, \, \|\x-\target\| = b \}.$


We compare the numerically computed solution to the exact solution in the $L_1$ and $L_\infty$ norms.
For the $L_\infty$ norm, we compare the solutions on a subset $\edomain^\epsilon\subset\edomain$
where the $\valr$ is known to be continuous:
\bq
\edomain^\epsilon = \{(\x,\bud)\in\edomain \mid \dist{\x}{\y} > \epsilon \text{ for all $(\y,\bud) \in \D(b)$}\}.
\eq
In particular we investigate the $L_\infty$ norm errors for $\epsilon = \epsilon(h) = 3h$ and $\epsilon = 0.1$ (independent of $h$).
The $L_1$ errors are computed over the whole computational domain.
The errors are reported in Table \ref{tab:convergence}.
A contour plot of the numerical solution on the top $\bud$-slice is shown in Figure \ref{fig:conv_toplevel}.

The convergence observed in Table \ref{tab:convergence} is actually stronger
than predicted by theory.  First, in this numerical test
we always chose $\Delta b = h$, whereas the theory (even for the no resets case!)
guarantees convergence only for $h = o(\Delta \bud)$; see Remark \ref{rem:convergence}.
Secondly, the $L_\infty$-norm convergence is guaranteed on any fixed compact set
away from discontinuity, but the choice of $\epsilon = 3h$ goes beyond that.

At the same time,
the observed rate of convergence (in all norms) is less than one
despite our use of the first-order accurate discretization.
This is not related to any discontinuities in the solution, but is rather due
to the ``rarefaction fans'' (characteristics spreading from a single point)
present in this problem.
Indeed, this phenomenon is well known even for computations of distance function from
a single target point: a cone-type singularity in the solution results in much larger
local truncation errors near the target, thus lowering the rate of convergence.
A ``singularity factoring'' approach recently introduced in \cite{FomelLuoZhao}
allows to circumvent this issue at the target, but we note that there are two more
rarefaction fans spreading from points $P_1$ and $P_2$; see Figure \ref{fig:conv_toplevel}.
(Intuitively, this is due to the fact that optimal trajectories from infinitely many
starting points pass through $P_1$ or $P_2$ on their way to $\target$.)
Since in general examples the locations and types of such rarefaction fans are a priori unknown,
``factoring methods'' similar to those in \cite{FomelLuoZhao} are not applicable.

\begin{figure}[ht]
\begin{minipage}[b]{0.45\linewidth}
\centering
 \includegraphics[height=2.5in]{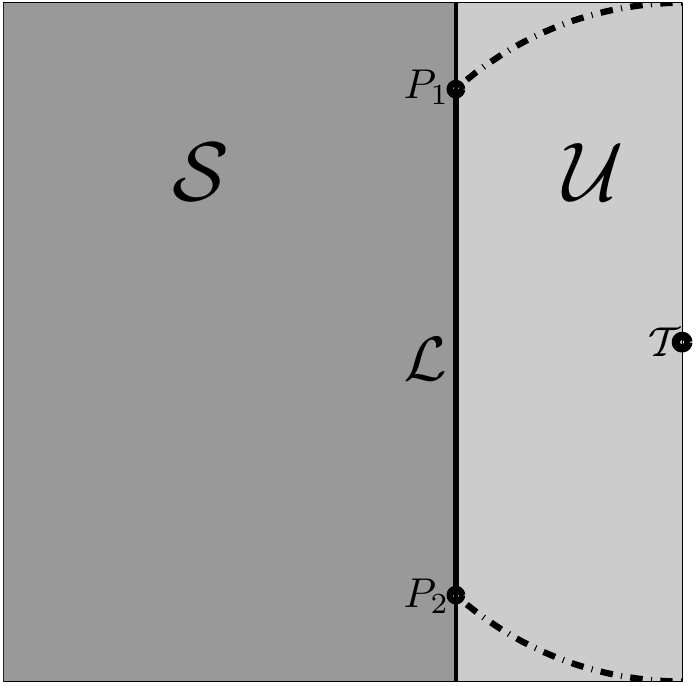}
\end{minipage}
\hspace{0.3cm}
\begin{minipage}[b]{0.4\linewidth}\centering
\includegraphics[height=2.5in]{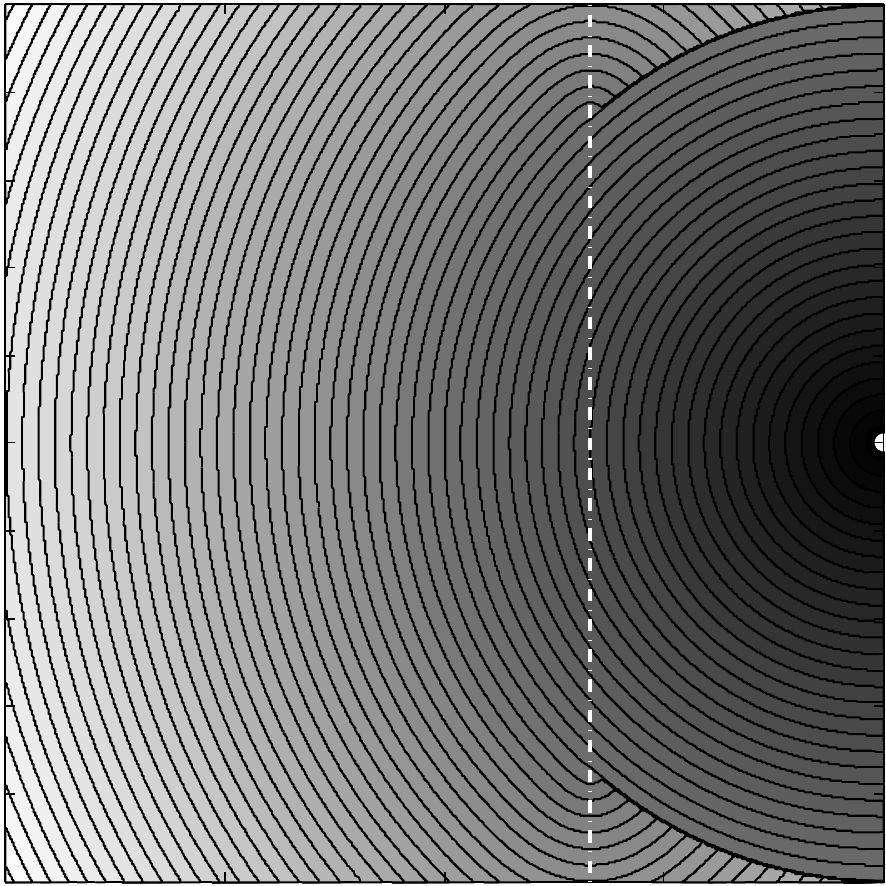}
\end{minipage}
\caption{Left: an illustration of the domain. The darker region is $\mathcal{S}$ and the lighter region
is $\mathcal{U}$. The target is $\target = (1,0)$. The black dotted line is the circle of radius $\maxb=1$
centered about $\target$ in $\mathcal{U}$. Right: a contour plot of the numerical solution of $\valr(x,y,1)$
for grid size $N=961$. The vertical white dotted line is the interface between $\safe$ and $\nsafe$ at $x=1/3$.
}
   \label{fig:conv_toplevel}
\end{figure}

\begin{table}
\centering
\begin{tabular}{|c|c||c|c||c|c||c|c|}
\hline
$N$ & h & $L_1(\edomain)$ & rate & $L_\infty (\edomain^\epsilon)$, $\epsilon = 3h$ & rate &
$L_\infty(\edomain^\epsilon)$, $\epsilon = 0.1$ & rate \\
\hline
\hline
61 & 1/30 & 0.0612 & - & 0.0681 & - & 0.0681 & - \\
\hline
121 & 1/60 & 0.0309 & 0.99 & 0.0513 & 0.41 & 0.0416 & 0.71 \\
\hline
241 & 1/120 & 0.0151 & 1.03 & 0.0401 & 0.36 & 0.0255 & 0.71 \\
\hline
481 & 1/240 & 0.0076 & 0.99 & 0.0265 & 0.60 & 0.0151 & 0.76 \\
\hline
961 & 1/480 & 0.0039 & 0.96 & 0.0162 & 0.71 &  0.0090 & 0.75 \\
\hline
\end{tabular}
\caption{Errors measured against the exact solution for various grid sizes $N$.
}
\label{tab:convergence}
\end{table}




\subsection{Geometry of $\safe$ and the number of iterations}



We illustrate how the information propagates within the main loop of Algorithm \ref{alg:budgetReset}.
Since the reachable part of the safe set ($\safereach$) is obtained iteratively,
it might seem natural to expect that the iterative process stops
once all the reachable components of $\safe$ are already found
(i.e., once $\safereach^k=\safereach$, also ensuring that $MFL^k = MFL$).
Here we show that this generally need not be the case and
the value function $\Val^k$ might require more iterations to converge after that point.
Roughly speaking, this occurs when the order of traversal of some optimal path
through a sequence of connected
components of $\safereach$ differs from the order in which those components were discovered.
Mathematically, the extra iterations are needed because the correct values of
$\Vals^k$ are not yet known on $\interface$, and the values of $\Valu^{k+1}$ on the MFL
are still incorrect as a result.

\begin{figure}[htbp]
   \centering
	\includegraphics[height=2in]{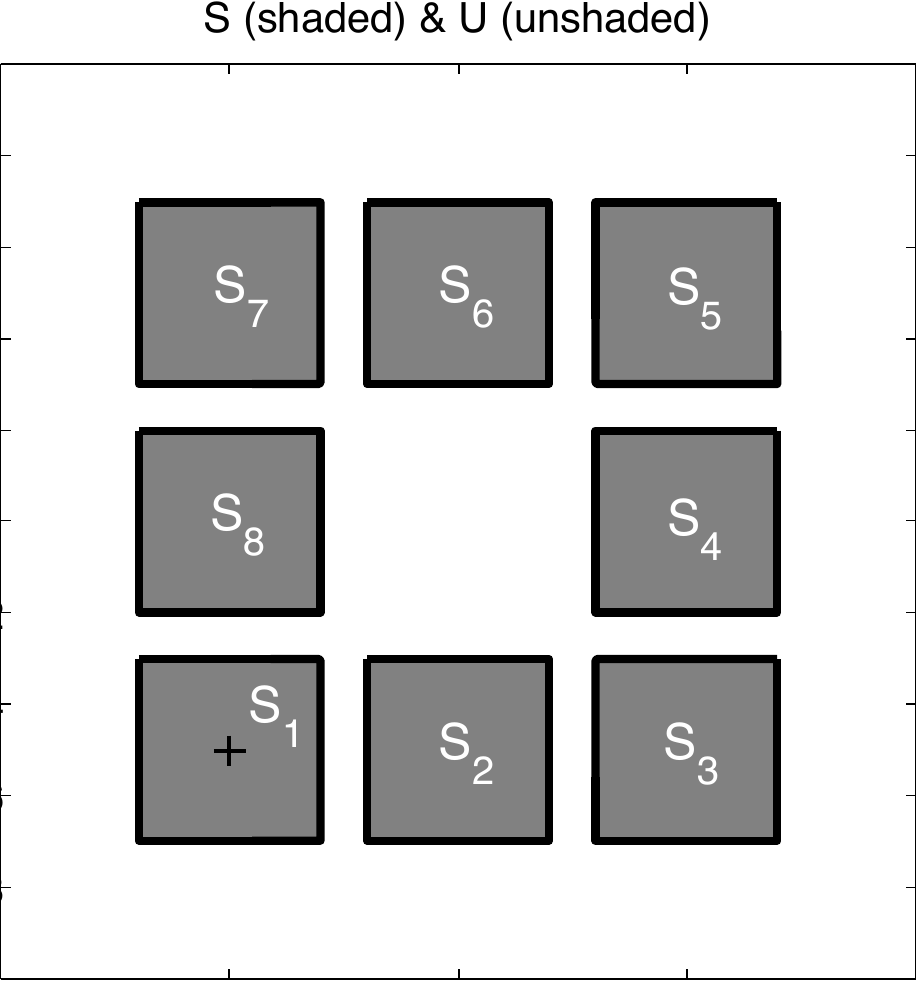}
	\hspace{1cm}
	\includegraphics[height=2in]{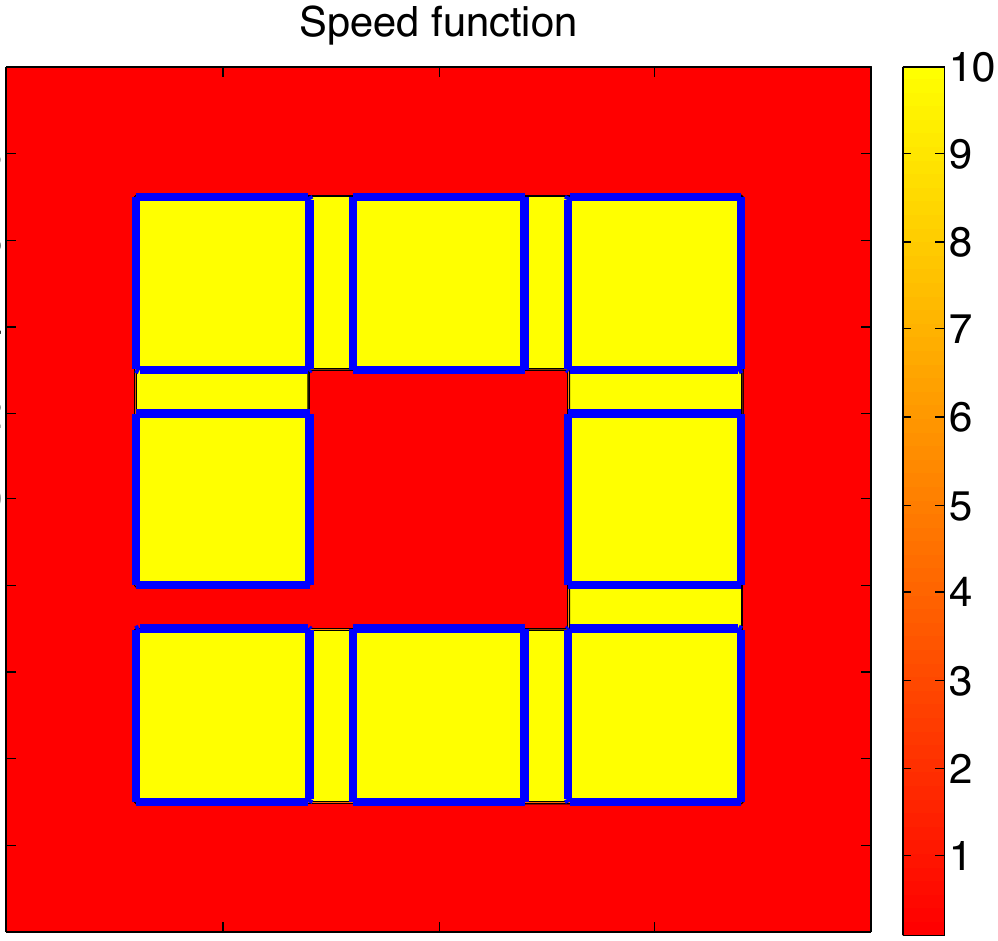}\\
	\caption{
	Left: The sets $\safe$, $\nsafe$ and the target $\target = (-0.5, -0.5)$ shown by a black 	cross.
	The eight connected components $S_1,S_2,\dots,S_8$ of $\safe$ are labeled.
	Right: the speed function.
	Note the slow speed in the corridor $C_8$ between the $S_1$ and $S_8$. }
   \label{fig:reset_iters1}
\end{figure}

Consider the following ``pathological'' example
(shown in
Figure \ref{fig:reset_iters1}): $\safe$ consists
of eight
square blocks $S_1,S_2,\dots, S_8$ with side lengths 0.4,
enumerated counter-clockwise,
with $\target\in S_1$.
To simplify the discussion,
we assume that $S_9:=S_1$ and introduce the notation
for ``unsafe corridors'' between the safe squares:
$$
C_i \; = \; \text{convex\_hull}(S_i \cup S_{i+1}) \, \cap \, \nsafe;
\qquad i=1,\ldots, 8.
$$
We set
\[
f(\x) =
\begin{cases}
10 & \x\in\safe\cup C_1 \cup C_2 \cup\cdots\cup C_7,\\
0.1 & \text{otherwise.}
\end{cases}
\]
Note that $f=0.1$ in $C_8$.
The target is $\target = (-0.5,-0.5)$ and the maximum budget is $\maxb = 1.5$.
Note that for all the starting positions in $S_2, \ldots, S_8$,
the corresponding constrained optimal trajectories will run toward $S_1$ clockwise
(to avoid crossing through the slow region in $C_8$).  The same is true for
all starting positions in $C_1,\ldots,C_7$ and even for points in $C_8$
that are significantly closer to $S_8$.

This problem was specifically designed to illustrate the difference between
the evolution of the reachable set $\domainreach^k = \nsafereach^k\cup\safereach^k$
and the evolution of the ``fully converged'' set
$\mathcal{F}^k=\{\x\mid \valr^k(\x)=\valr(\x)\}$, on which the value function
is correct after the $k$-th iteration.
Both sets eventually contain all $S_i$'s and $C_i$'s,
but a careful examination of Figure \ref{fig:reset_iters2}
reveals the difference.
For the reachable set, $\domainreach^1 = S_1$ initially, and the algorithm iteratively
discovers the reachable $S_i$'s (and $C_i$'s) simultaneously in both
the clockwise and counter-clockwise directions.
More than one $S_i$ can  be ``discovered'' per iteration in each direction:
e.g., at iteration $k=3$, Phase I of the algorithm discovers
that $C_2\cup C_3\subset\nsafereach^3$ owing to a feasible path that
``passes through the corner'' shared by $C_2$ and $C_3$; this subsequently leads to
Phase II discovering that $S_3\cup S_4\subset\safereach^3$.
The same argument implies that $C_6\cup C_7\subset\nsafereach^3$
and $S_6\cup S_7\subset\safereach^3$.  After one more iteration
we already see that $S_5\subset\safereach^4$ and another iteration
is sufficient to recover the entire reachable set $\domainreach = \domainreach^5$.

However, on a large part of $\domainreach$ the value function is still quite far from correct
at that point; e.g., a comparison of level curves in $C_5$ and $S_5$ shows that
$C_5 \not \in \mathcal{F}^5$.  Since $\target\in S_1\subset\safe$ and $S_1$ is convex,
we have $S_1 \subset \mathcal{F}^1$.  It is similarly easy to see that
$(\mathcal{F}^{k-1} \cup C_{k-1} \cup S_k) \subset \mathcal{F}^k$ for $k=2,...,8$.
Thus, it takes eight iterations to correctly compute $\valr$ on the entire safe set
and one more iteration to cover those unsafe points (including a part of $C_8$),
whose optimal trajectories take them first to $S_8$ and then clockwise
(through the ``fast belt'') toward $S_1$.
We note that, as iterations progress, the value function need not be converged
on recently discovered components of $\safereach$, even if the level sets already show
the generally correct (clockwise) direction of optimal trajectories.  For example,
it might seem that $S_8 \in \mathcal{F}^6$, but a careful comparison of level curves
shows that the value function is still incorrect even on $C_6$ and $S_7$.
(This is due to the fact that the reachability of $S_8$ is discovered by feasible
trajectories passing through a common corner of $C_6$ and $C_7$.)
Figure \ref{fig:convergence_pattern} confirms this by showing the $\infty$-norm of
the value changes after each iteration. We observe two key events:\\
$\bullet \;$ the initial drop in value changes,
when $\domainreach$ is fully discovered after iteration five\\
(at which point the errors are still independent of $h$ and $\Delta b$);\\
$\bullet \;$ and the convergence (i.e., the drop of value changes to the machine precision) after iteration nine.


\begin{figure}[htbp]
   \centering	
	\includegraphics[width=1.9in]{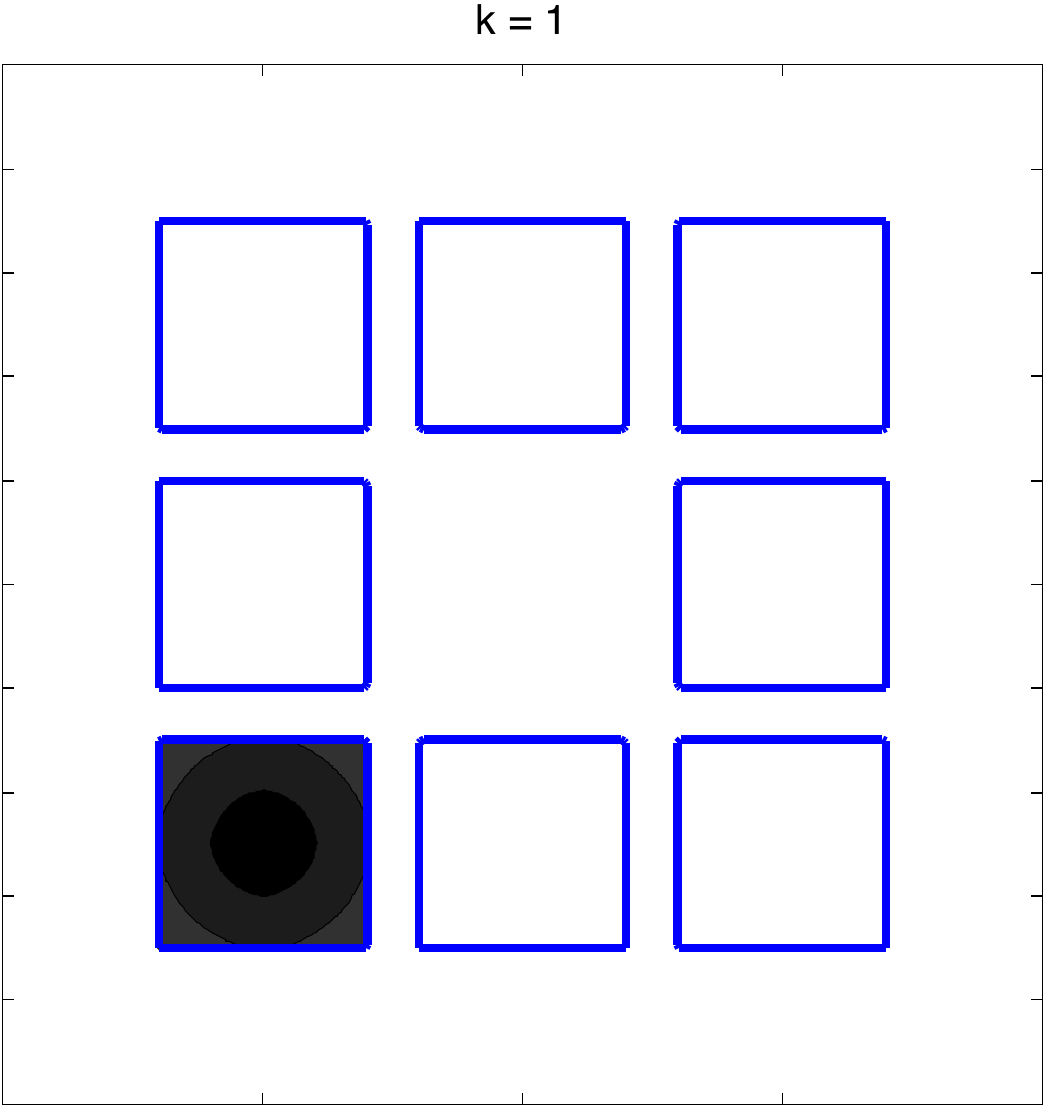}
	\includegraphics[width=1.9in]{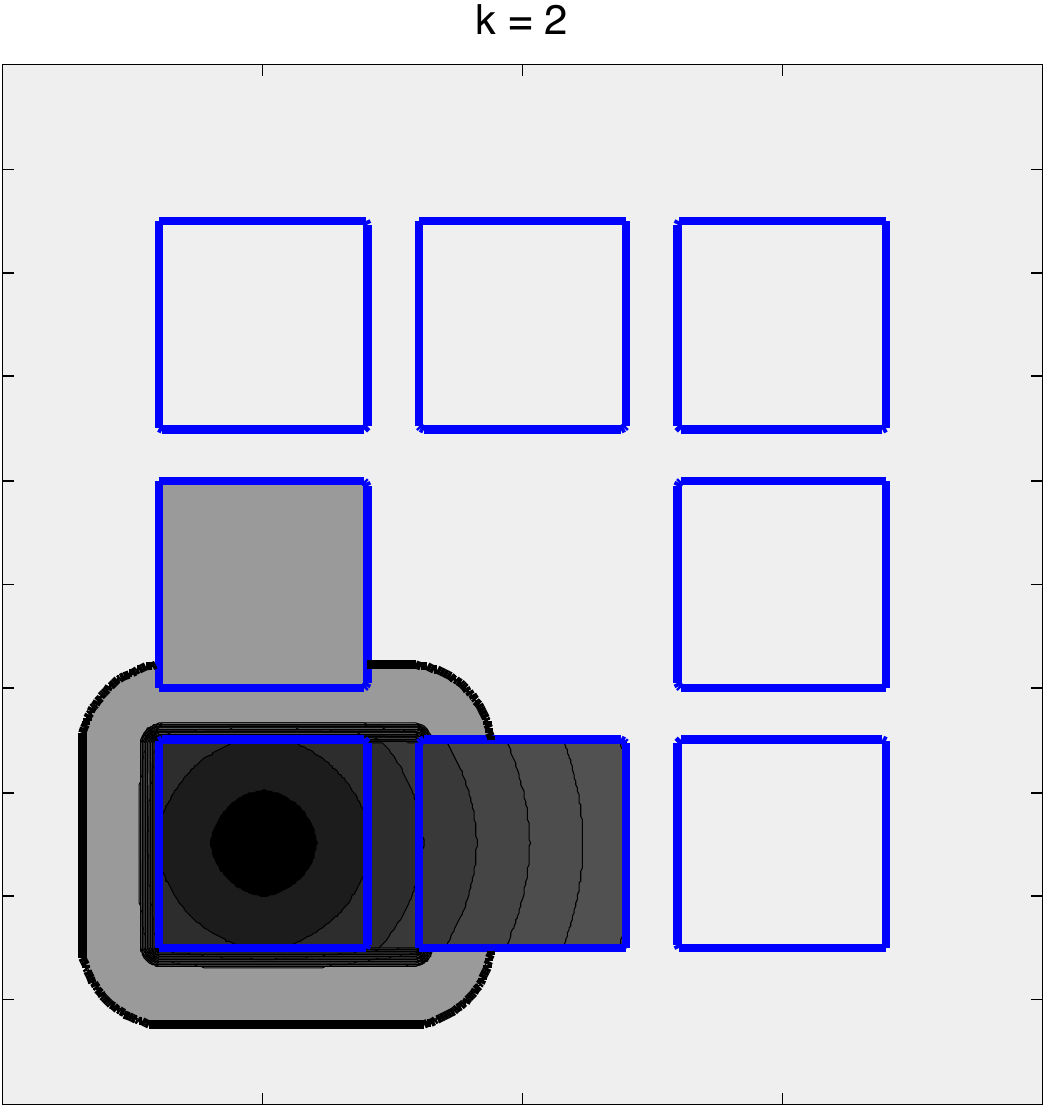}
	\includegraphics[width=1.9in]{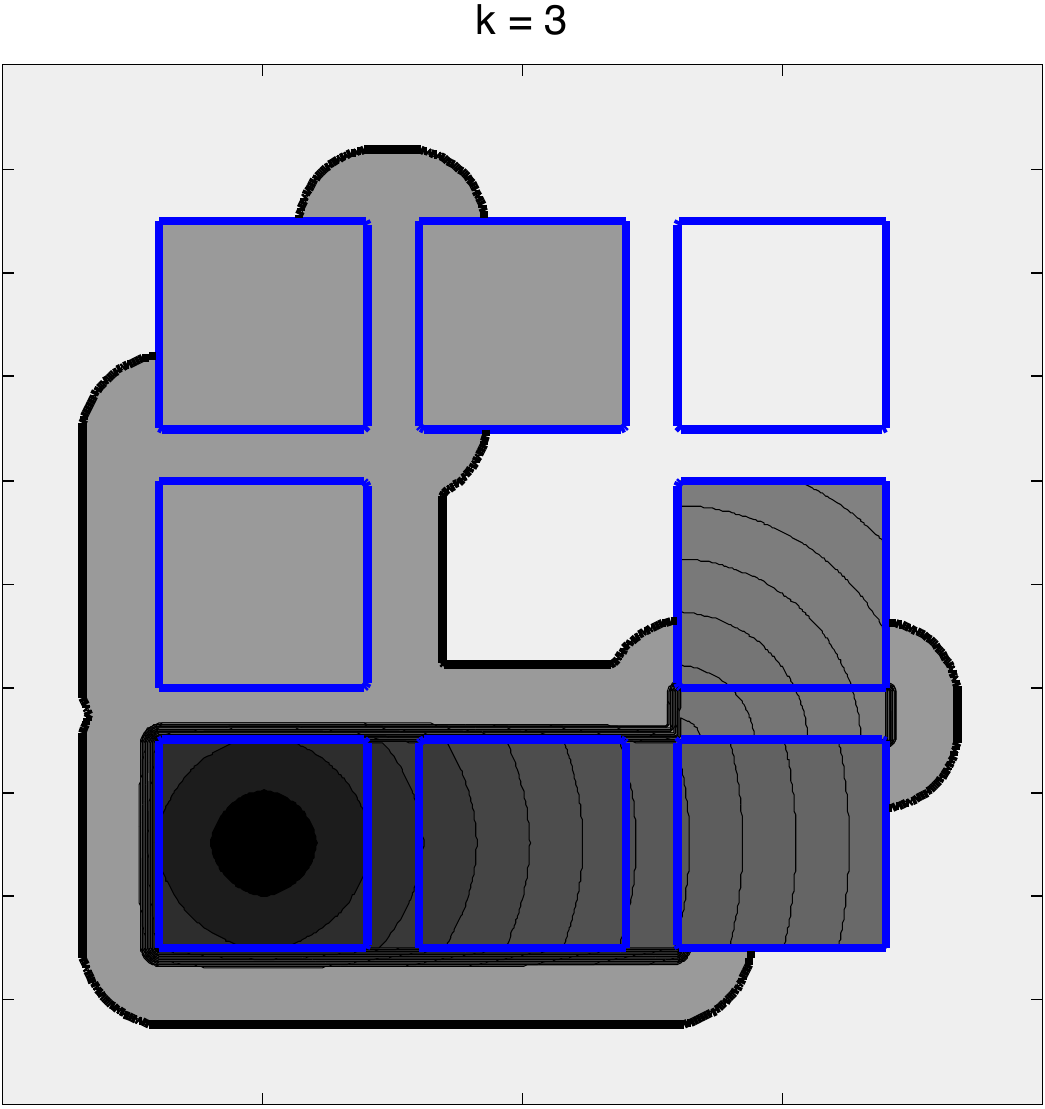}\\
	\includegraphics[width=1.9in]{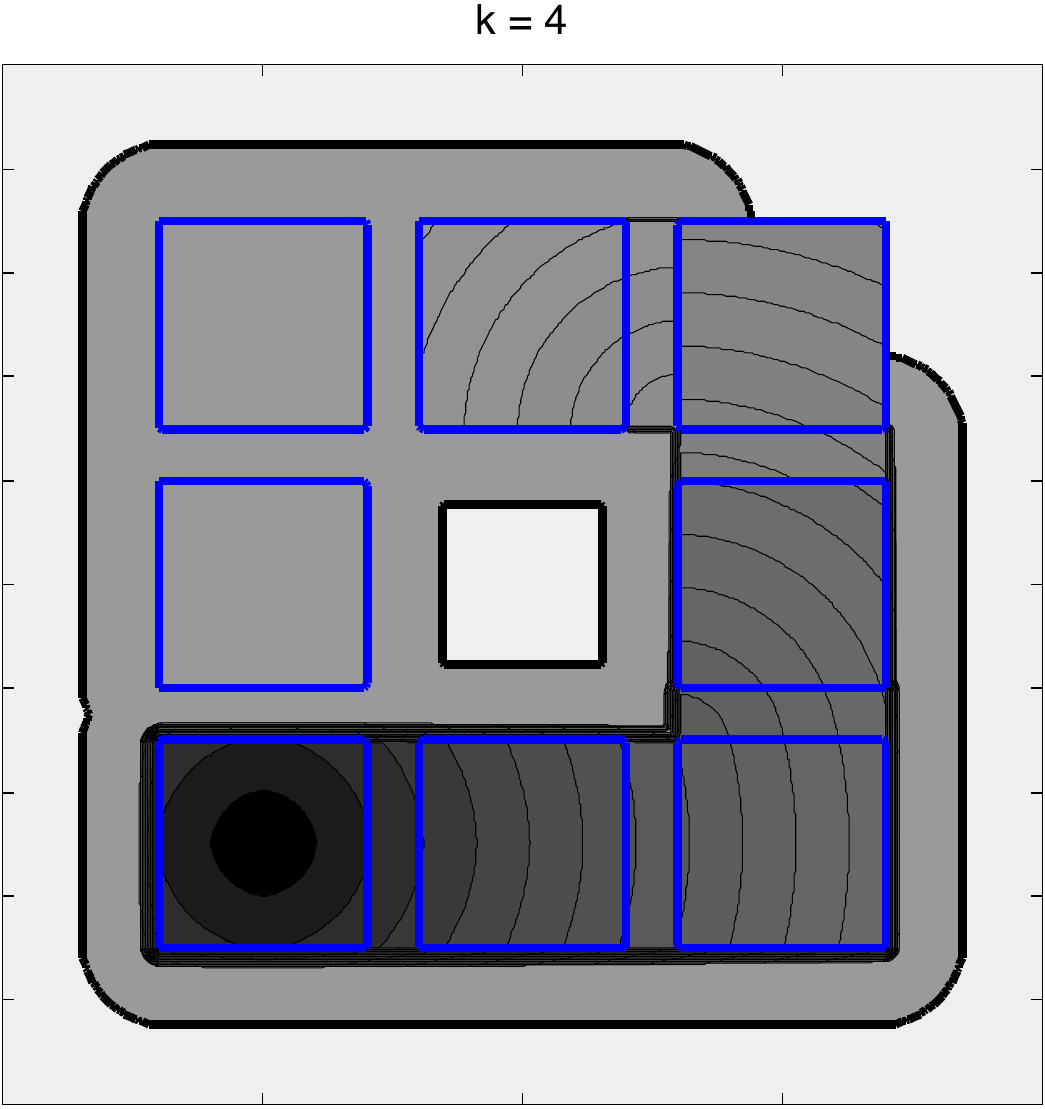}
	\includegraphics[width=1.9in]{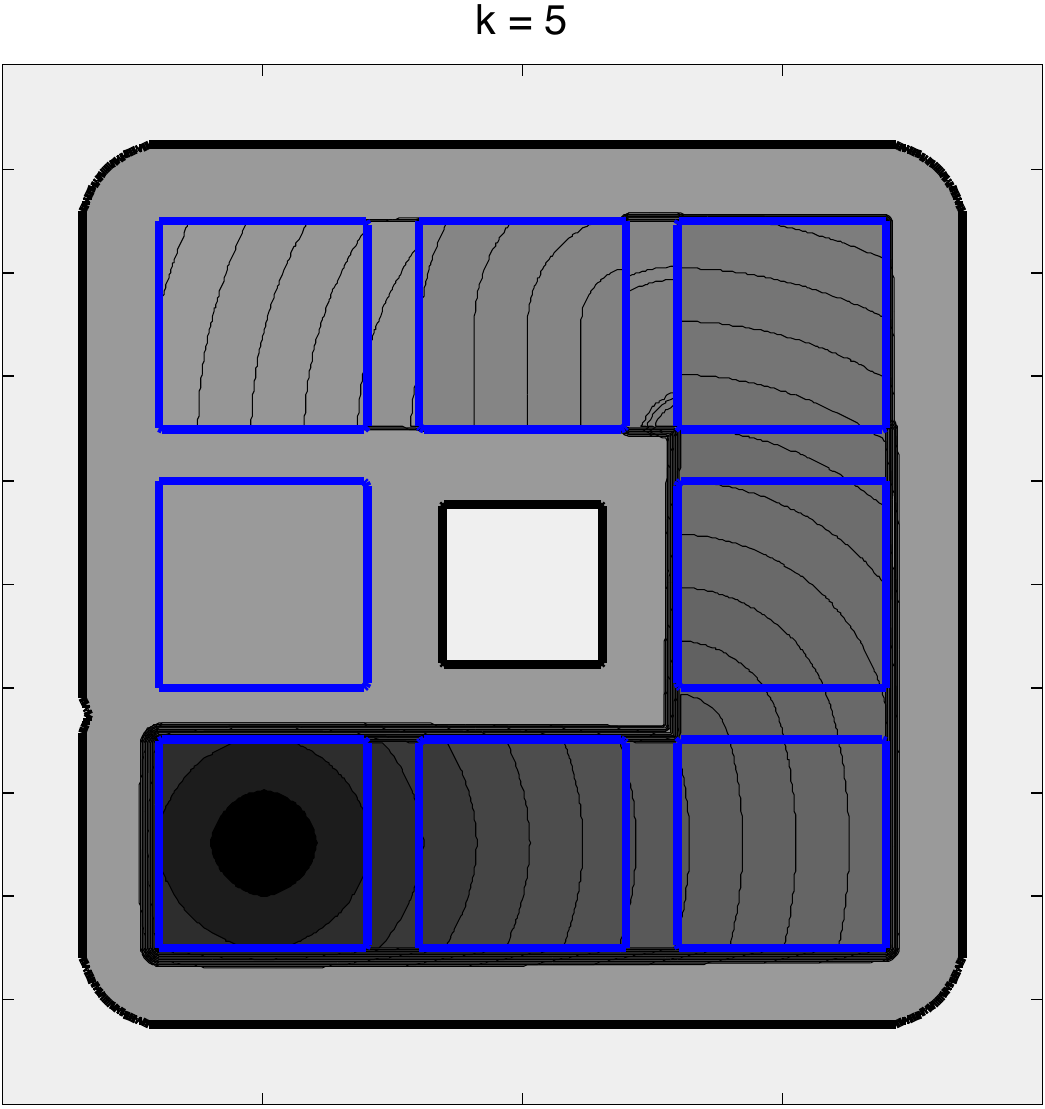}
	\includegraphics[width=1.9in]{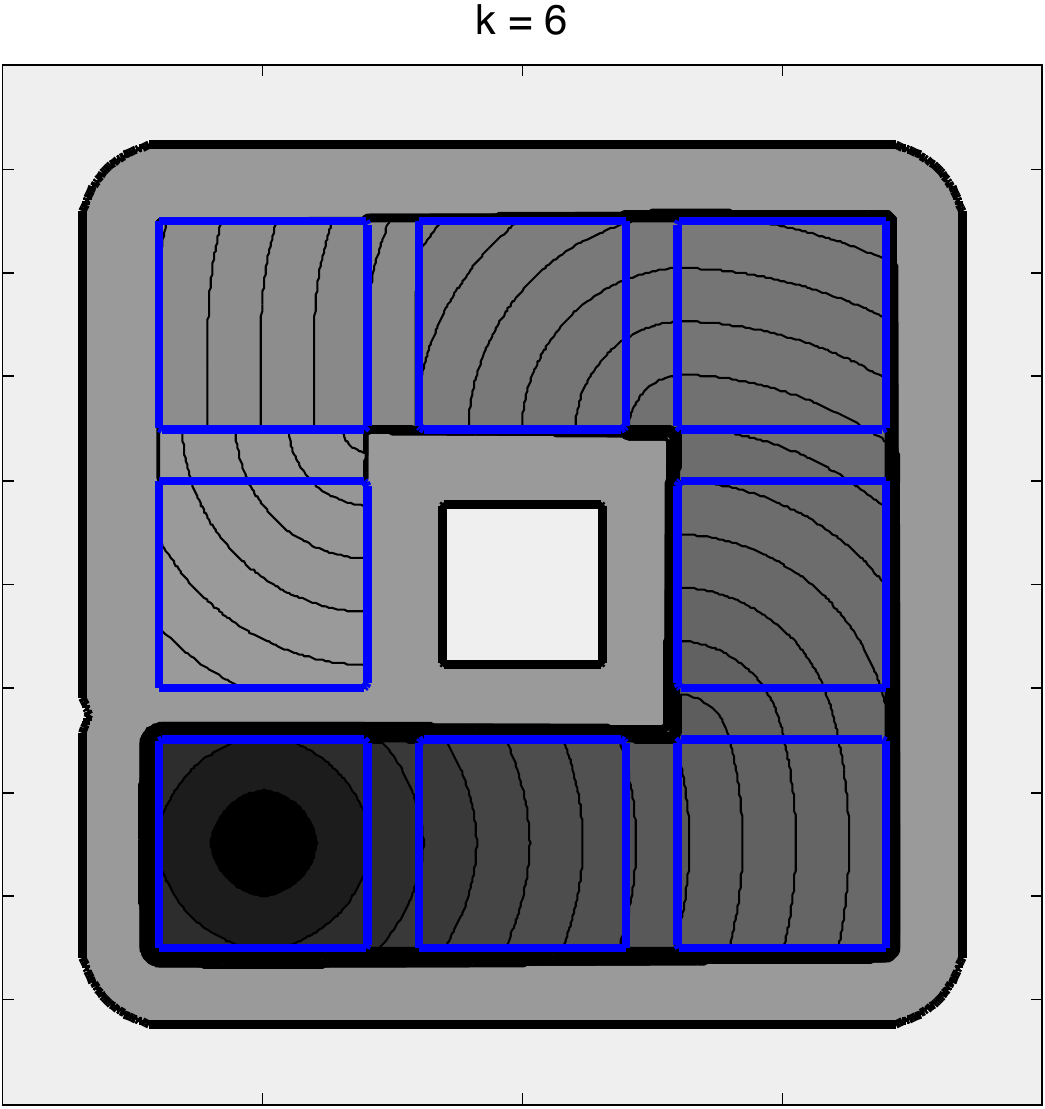}
   \caption{
   The first six iterations (each after Phase II) of $\Val^k$ on $\bud=\maxb$.
   The contour lines are scaled logarithmically to avoid bunching in the slow regions of $\nsafe$.}
   \label{fig:reset_iters2}
\end{figure}

\begin{figure}[htbp]
   \centering
	\includegraphics[width=3in]{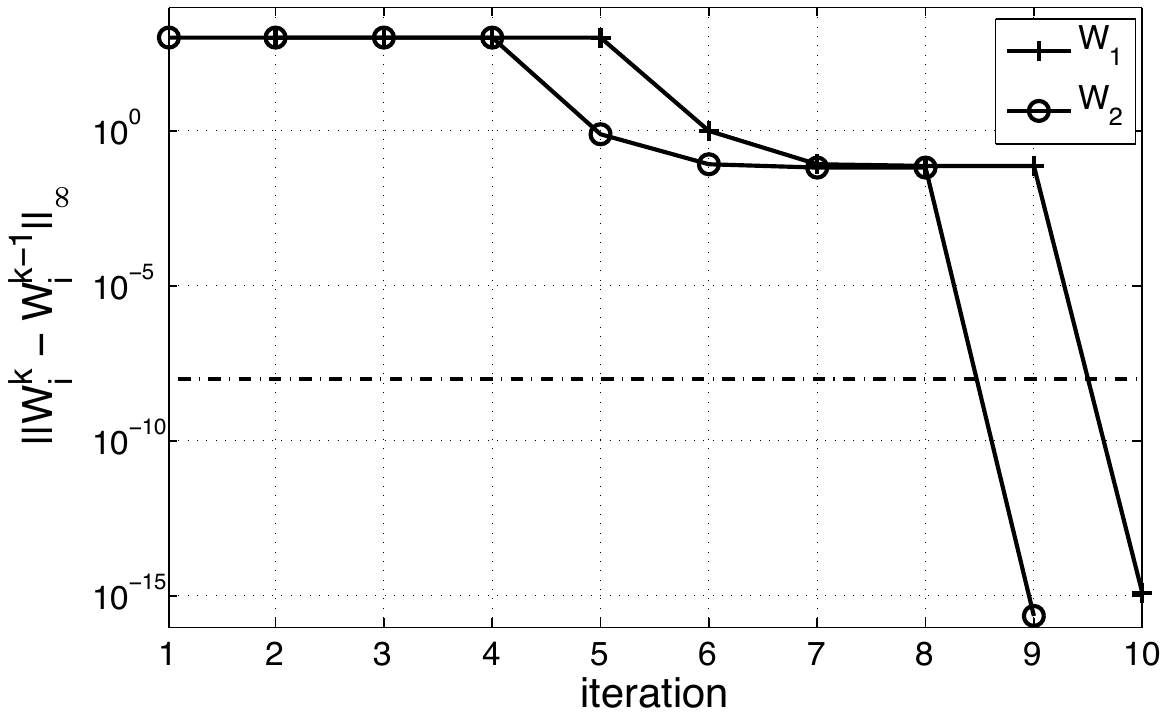}
   \caption{The max-norm difference between consecutive $\Valu^k$ and $\Vals^k$.
   The numerical convergence tolerance is shown by the thick horizontal dotted line.
  Both $\Valu^0$ and $\Vals^0$ were set to $10^3$.}
   \label{fig:convergence_pattern}
\end{figure}



\subsection{Optimal paths and the effect of varying $\maxb$}



We now consider two examples with inhomogeneous speed functions.
The first scenario involves a discontinuous speed function that is slow in the safe set:
\[
f(\x) =
\begin{cases}
1 & \x\in\nsafe\\
0.3 & \x\in\safe.
\end{cases}
\]
This example presents a curious dilemma: any good path will try to avoid $\safe$ to travel faster to $\target$,
but it must visit $\safe$ at least every $\maxb$ distance to keep the budget from depleting.
The numerical test for $\maxb = 0.4$ is shown in the center plot of Figure \ref{fig:inhomo}.
The computed ``optimal path'' tends to travel
along the interface $\interface$ on the $\nsafe$ side while occasionally making short visits into
$\safe$ to reset the budget\footnote{
Since $\interface \subset \safe$, it is not really possible to quickly travel on the $\interface$ itself. As a result, an optimal control does not exist, though the value function $\valu$ is still well-defined.
This lack of optimal control does not contradict the compatibility condition \eqref{comp_1} (i.e., $\valu = \vals$ on $\interface$).
The numerically recovered ``optimal'' trajectory shown in this Figure is 
actually
``$\varepsilon$-suboptimal'', where $\epsilon \to 0$ under the grid refinement.
}.
We have added small white circles to the plot in Figure \ref{fig:inhomo} (center) to identify the locations of these ``reentry'' points.

The second example illustrates the robustness of the numerical scheme to non-trivial speed functions in $\nsafe$:
we set $\maxb = 0.4$ and
\[
f(\x) = 1 - 0.5\sin(5\pi x)\sin(5\pi y).
\]
The computed value function and a sample path are shown in the right plot of Figure \ref{fig:inhomo}.






\begin{figure}[htbp]
\begin{center}
\includegraphics[width=1.9in]{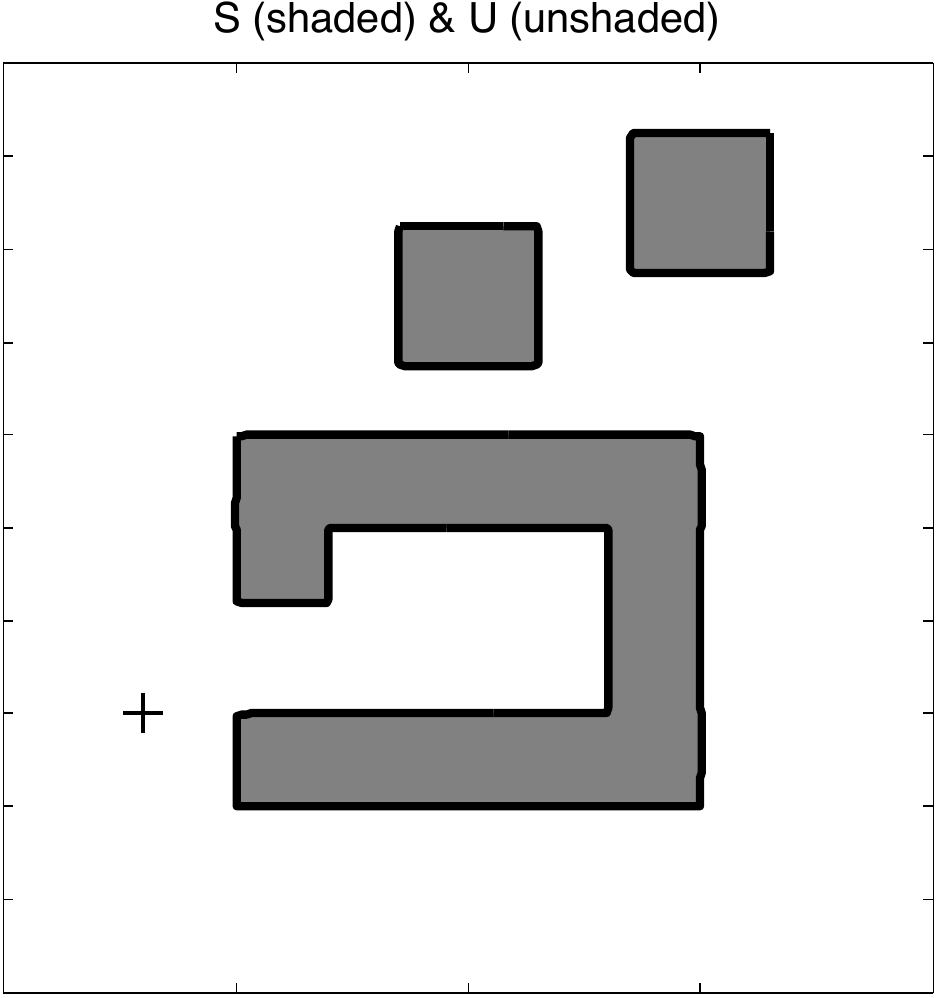}
\includegraphics[width=1.9in]{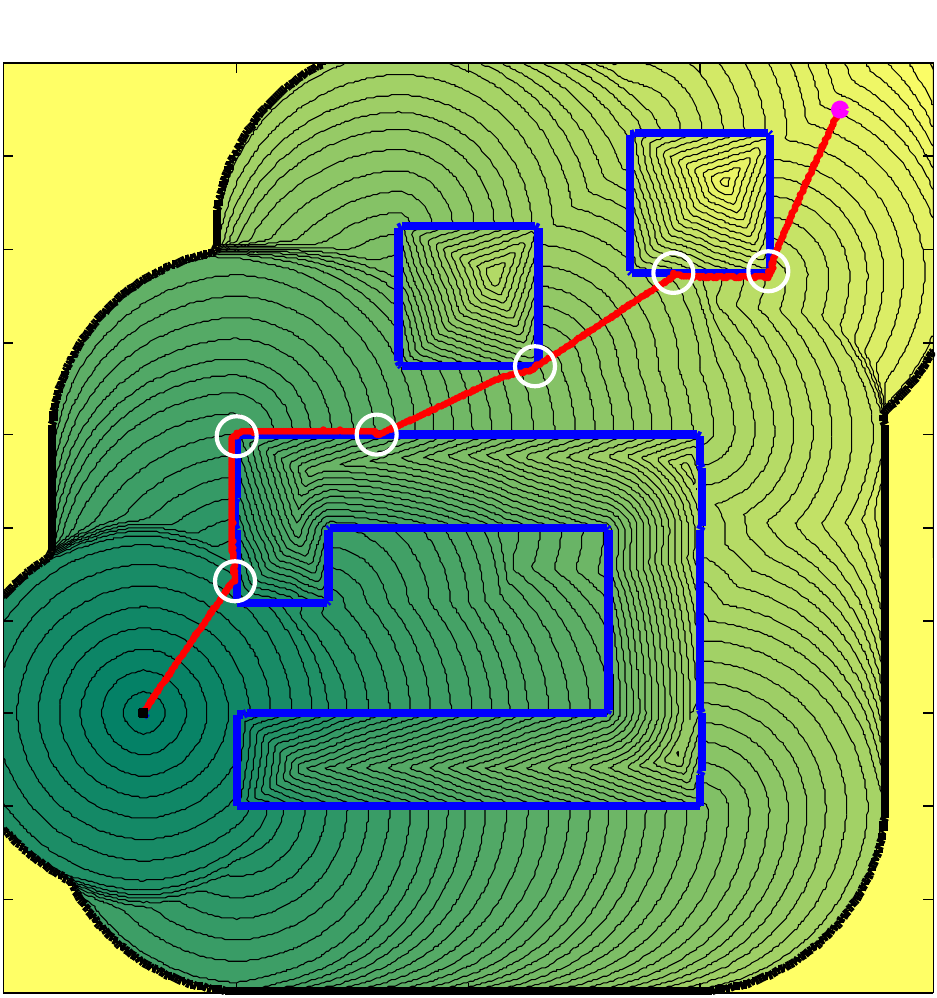}
\includegraphics[width=1.9in]{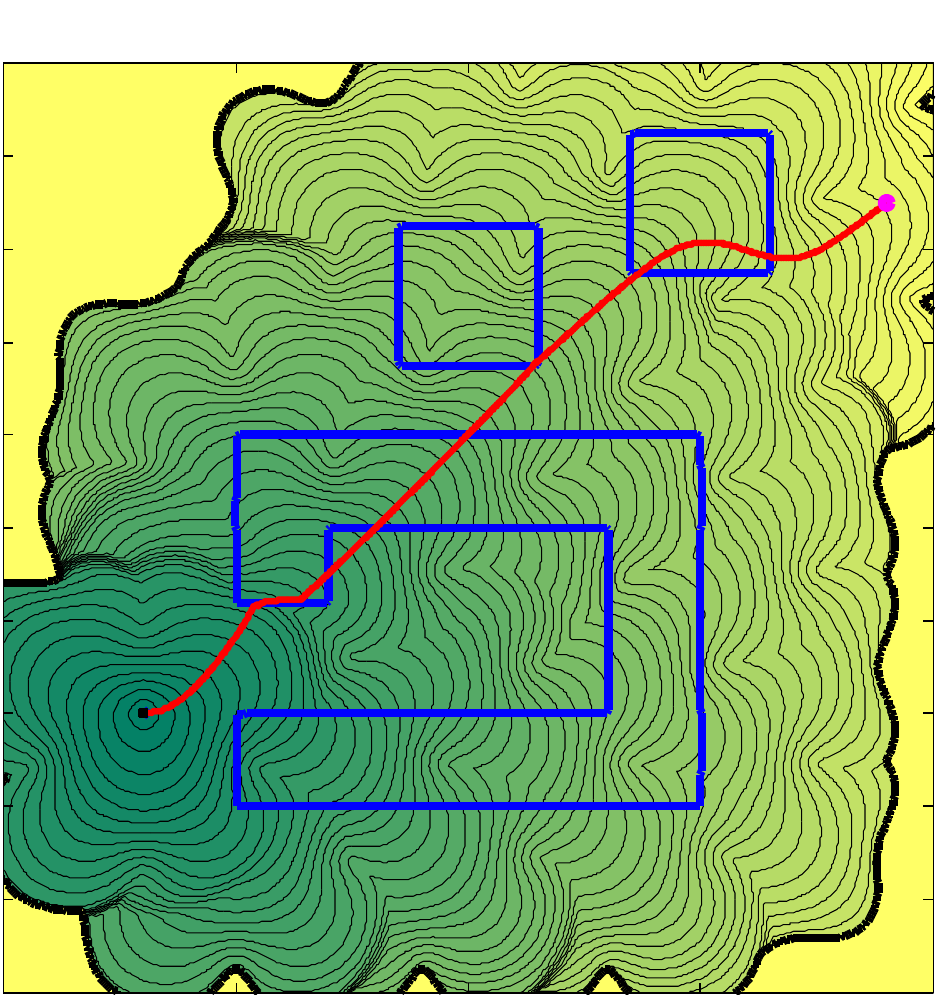}
\caption{Sample optimal paths on the ``islands'' example with inhomogeneous speed functions. }
\label{fig:inhomo}
\end{center}
\end{figure}






Next, we give numerical examples showing the effects of varying $\maxb$.
Figure \ref{fig:varyB} illustrates these effects on the ``islands'' examples (as in figure \ref{fig:inhomo}).
The speeds were set to $f=1$ on all of the domain.
While the optimal path is computed from the same initial point $(0.8,0.5)$,
note the large changes in its global behavior.

\begin{figure}[htbp]
\begin{center}
\includegraphics[width=1.9in]{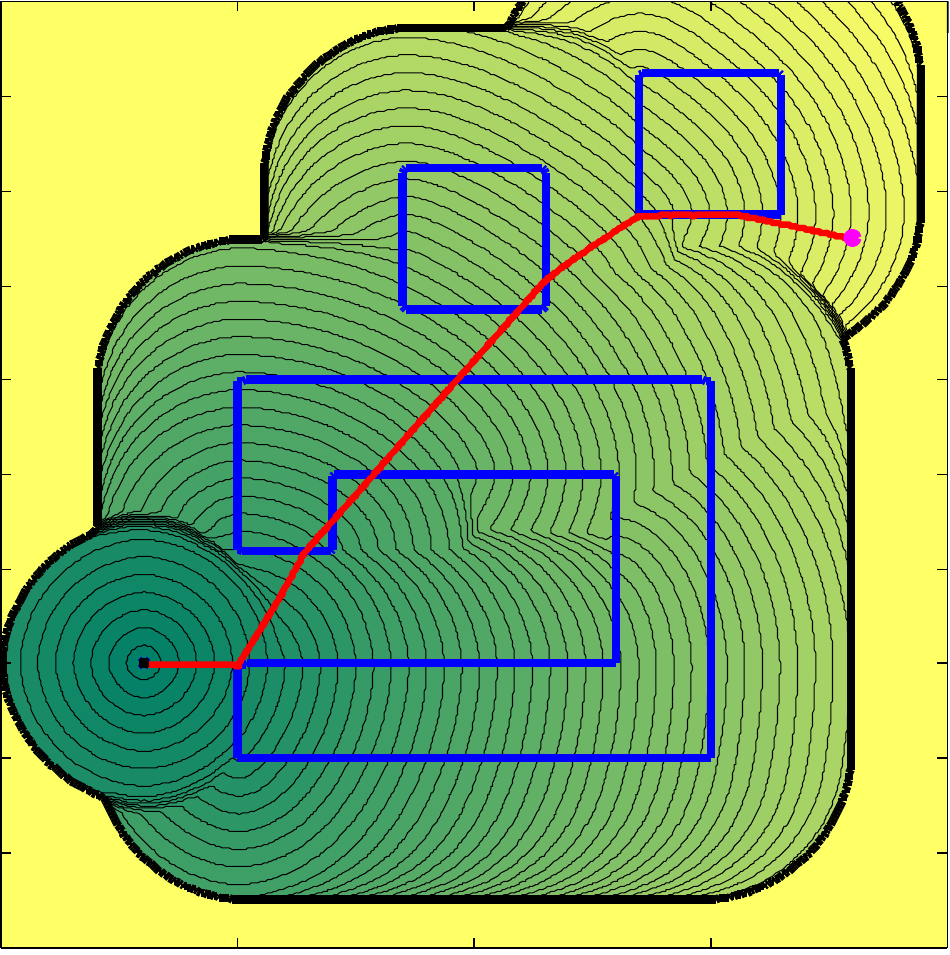}
\includegraphics[width=1.9in]{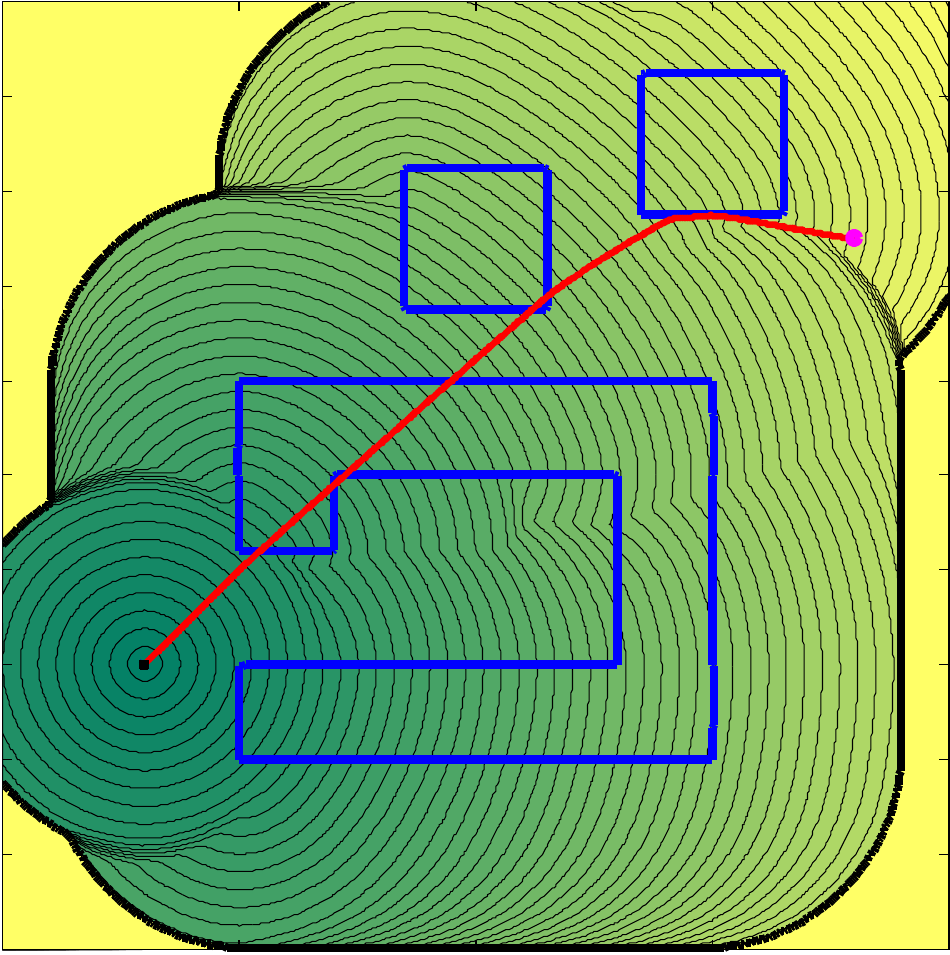}
\includegraphics[width=1.9in]{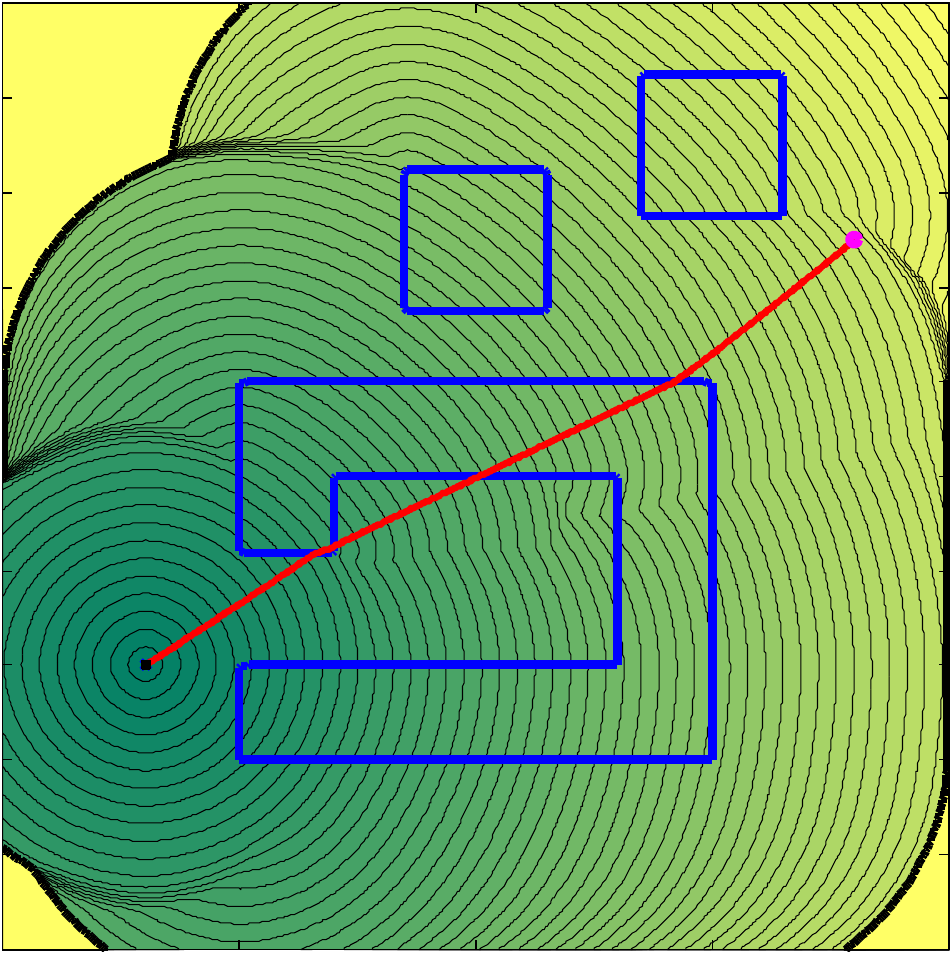}
\caption{Sample optimal paths on the ``islands'' example for varying $\maxb$. Left to right: $\maxb = 0.3$, 0.4 and 0.5, respectively.}
\label{fig:varyB}
\end{center}
\end{figure}

\label{ss:inhomog_tests}




\subsection{Constrained contiguous visibility time.}\label{visibility}
We apply Algorithm \ref{alg:budgetReset} to a problem involving visibility exposure:
suppose the objective is to move a robot towards a target in the shortest time,
among opaque and impenetrable obstacles, while avoiding  ``prolonged exposure''
to a static enemy observer.
In our budget reset problem setting, we impose the prolonged exposure constraint by
letting the enemy-visible region be $\nsafe$ and the non-visible region as $\safe$.
This is similar to the problem considered in \cite{KumarVlad}, except that once the
robot enters the non-visible region, it is again allowed to travel through the visible region
up to the time $\maxb$.

The domain consists of four obstacles which act both as state constraints and
as occluders.
The static observer is placed at $(0.8,0.8)$, and the corresponding visible set is
computed by solving an auxiliary (static and linear) PDE on $\domain$ \cite{visPaper}.
We compute the value function and optimal paths for the same starting location
but two different exposure budgets: $\maxb = 0.15$ and $\maxb = 0.3$, see
figure \ref{fig:visibility}.
Note that, for small $\maxb$, the budget is insufficient to directly travel across
the $\nsafe$ `corridor' between the ``shadows'' of the larger foreground obstacles;
for the larger $\maxb$ this shortcut is feasible, thus reducing the path length.

\begin{figure}[htbp]
   \centering
	\includegraphics[width=1.9in]{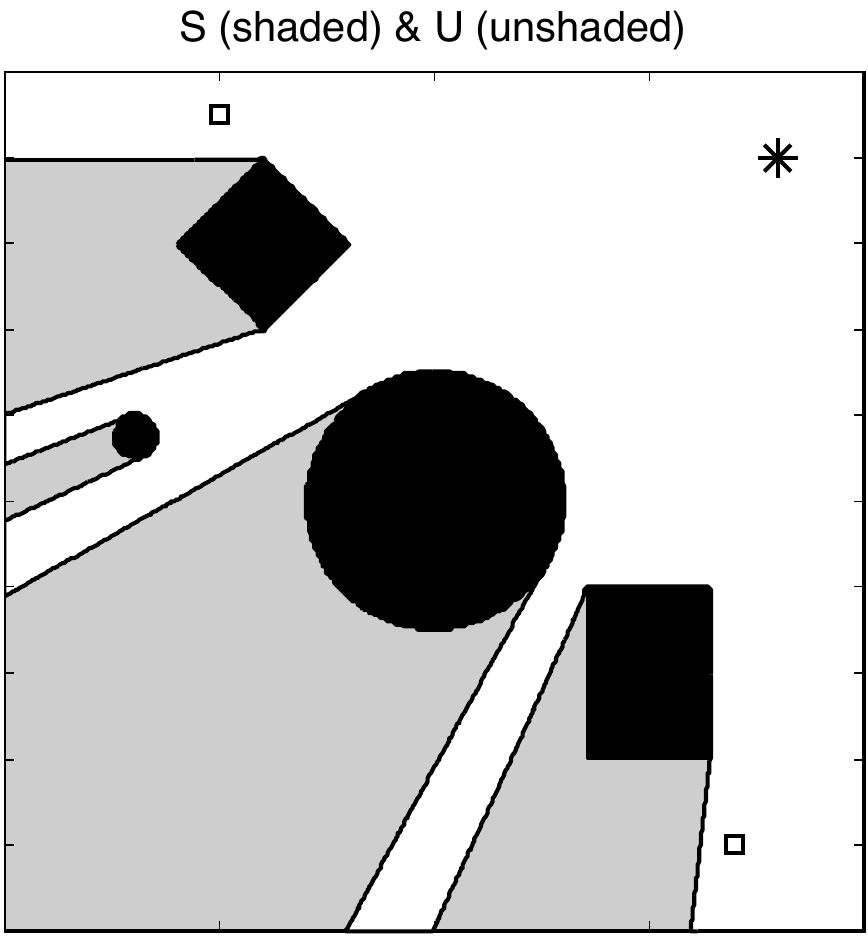}
	\includegraphics[width=1.9in]{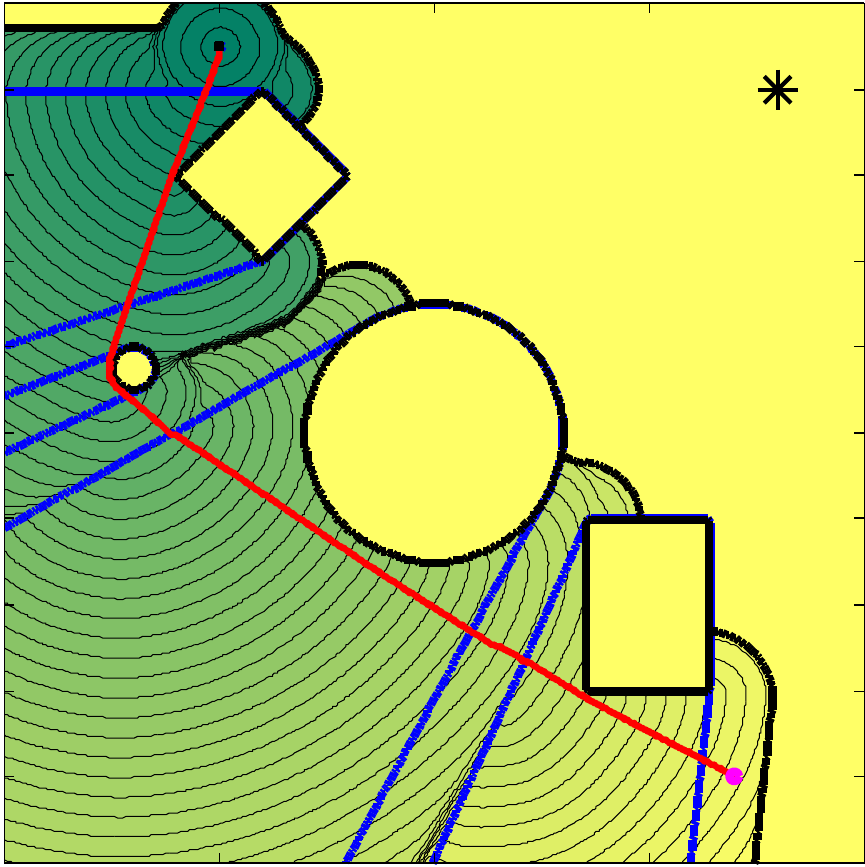}
	\includegraphics[width=1.9in]{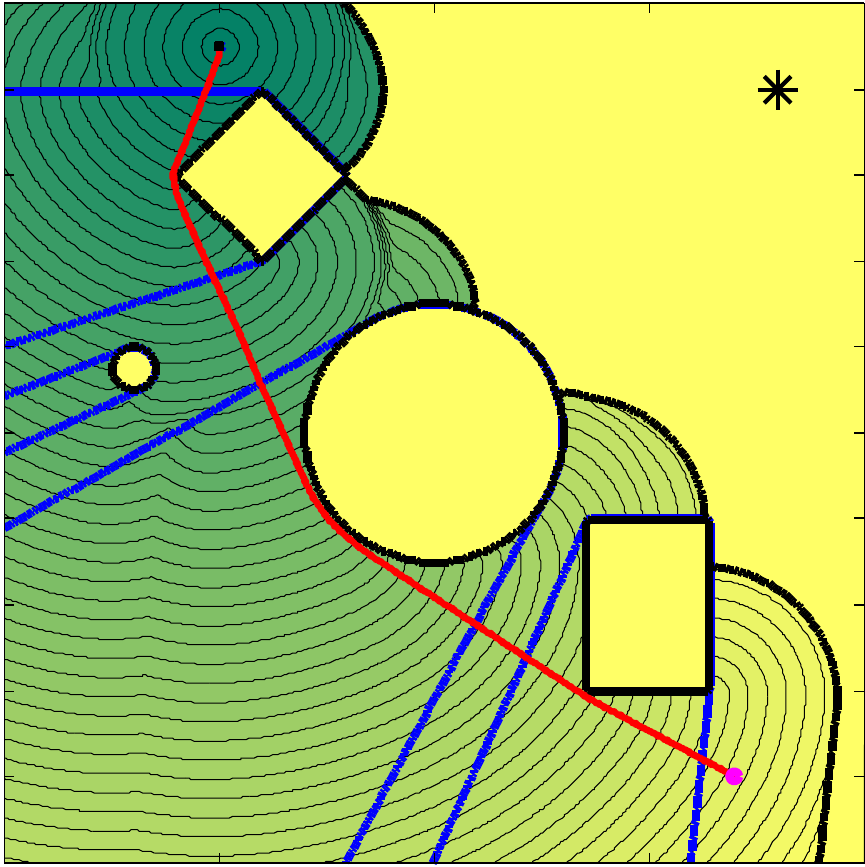}
   \caption{The problem of constrained contiguous visibility time.
   The static observer location is shown by an asterisk.
   Left: the observer-viewable region is white; the occluders/obstacles are black, and their ``shadows'' are gray.
   The objective of is to find the quickest path connecting the two small squares
   while avoiding prolonged enemy-exposure.
   Center and right: contour plots of $\Val$ at $\bud=\maxb$ and the constrained optimal
   paths with $\maxb = 0.15$ and 0.3, respectively.}
   \label{fig:visibility}
\end{figure}
\label{ss:visibility_tests}

\section{Conclusions.}
\label{s:conclusions}
In this paper we focused on computational methods for optimal control of budget-constrained problems
with resets.  In the deterministic case on graphs, we explained how such problems
can be solved by noniterative (label-setting) methods. We then introduced new fast iterative methods,
suitable for
\iffullversion
both deterministic and stochastic problems on graphs,
\else
problems on graphs
\fi
as well as for continuous deterministic budget reset problems.
Throughout, we utilized the causal properties of the value function
to make sure that dynamic programming equations are solved efficiently
on the new extended domain.
\iffullversion
In the appendix we also describe an iterative algorithm on the original (non-extended) domain
for solving a related simpler problem of finding the budget-reset-``reachable''
parts of the state space.
\else
Connections to stochastic shortest path problems on graphs are also highlighted in the expanded
version of this manuscript \cite{Unsafe_full_version}.
\fi

We presented empirical evidence of convergence
and illustrated other properties of our methods on several examples, including
path-planning under constraints on ``prolonged exposure'' to an enemy observer.
Even though all selected examples are isotropic in cost and dynamics,
only minor modifications (with no performance penalties) are needed to treat anisotropy
in secondary cost $\runcostb$.
Anisotropies in primary cost and/or dynamics can also be easily treated by switching to
a different (non-iterative or fast iterative) method on the safe set $\safe$.
Several other natural extensions are likely to be more computationally expensive,
but can be handled in the same framework:
\begin{itemize}
\item
In some applications the
resource restoration is more realistically modeled not as an instantaneous reset,
but as a continuous process on $\safe$.  Alternatively, resets
might be also modeled as conscious/optional control decisions
available on $\safe$, with an instantaneous penalty in primary cost.
\item
Differential games can be similarly modified to account for limited (and possibly renewable)
resource budgets.
\item
More generally,
both the dynamics and the budget changes might be affected by some random events,
leading to stochastic trajectories in the extended domain.
\item
It would be interesting to extend the method to problems with constraints on multiple
reset-renewable resources.
\end{itemize}
For problems with a fixed starting position, significant computational savings could be
attained by adopting A*-type domain restriction algorithms \cite{Clawson1}.
All of these extensions are of obvious practical importance in realistic applications,
and we hope to address them in the future.
Finally, more work is clearly needed to provide proofs of convergence of semi-Lagrangian schemes
to discontinuous viscosity solutions of hybrid systems.

\iffullversion
\appendix

\section{Determining the Reachable Set without expanding the state space.}
\label{s:Reachable}

For certain applications, one may be interested in computing only the \emph{reachable sets}:
\[
\domainreach = \{(\x,\bud) \mid \valr(\x,\bud)< \infty\}\subset\bar{\domain}.
\]
While $\domainreach$ is indeed a byproduct of Algorithm \ref{alg:budgetReset}, the goal of this
section is to recover $\domainreach$ by a ``lower dimensional'' algorithm;
e.g., using only computations on a grid $\grid$ in $\cdomain \subset \R^n$.

We note that the reachable set can be alternatively defined
as $\domainreach = \{(\x,\bud) \mid \mfl(\x) \le \bud\}$, where
$\mfl(\x)$ is the minimum needed starting budget, defined in \eqref{mfl_defn} and \eqref{mfl_bc2}.
Thus, it suffices to compute $\mfl$ from \eqref{mfl_hjb},
which in turn requires knowing $\safereach = \{\x\in\safe \mid \vals(\x)<\infty\}$ to impose the boundary condition \eqref{mfl_bc2}.
We take an approach similar to Algorithm \ref{alg:budgetReset}, by iteratively growing the
known reachable set.
The main difference is, instead of computing $\valu^{k}$ on $\domain_{\nsafe}$,
we solve a simpler boundary value problem on $\nsafe$ itself -- here
we only need to know whether $\target =\{\x\in\boundary \mid \term(\x)<\infty\}$ is reachable (from each point in $\nsafe$ and
starting with a given budget),
instead of finding the minimum cost of reaching it.

To this end, we introduce the auxiliary functions $\aux^k\colon\domain\rightarrow \R$,
computed iteratively in tandem with $\mfl^k$,
that can be used to extract the reachable sets in the safe set:
$\safereach^k = \{\x\in\safe\mid\aux^k(\x)<\infty\}$.
Similar to the recursive algorithm in section \ref{ss:iterative_brp}, we define initially
\begin{align}
\mfl^0(\x) = \aux^0(\x) =
\begin{cases}
0 & \x\in\target,\\
\infty & \text{otherwise};
\end{cases}
\end{align}
the $k$-th auxiliary function is defined as the (discontinuous) viscosity solution of the boundary value problem
\bq\label{aux_eikonal}
\begin{aligned}
\|\grad \aux^k(\x) \| &= 0 \qquad\x\in\text{int}(\safe),\\
\aux^k(\x) &=
\begin{cases}
0 & \x\in\target\\
\liminf\limits_{\x'\to\x \atop \x'\in\nsafe}\mfl^k(\x) & \x\in\interface\backslash\target\\
\infty & \text{otherwise}.
\end{cases}
\end{aligned}
\eq
(See Remark \ref{rem:gamma_interp} regarding the `liminf' in the boundary condition above.)
The $\text{MFL}^k$ function $\mfl^k$ is defined as described in section \ref{ss:mfl} except that $\safereach^{k-1}$ is no longer computed using $\valu^k$;
instead, it is computed from $\aux^{k-1}$ via the formula 
$\safereach^{k-1} = \{\x\in\safe\mid\aux^{k-1}(\x)<\infty\}$.
Note that \eqref{aux_eikonal} implies that for each (closed) connected component $\safe'$ of $\safe$,
\[
\aux^k(\x) = \min_{\x'\in\partial\safe'}\aux^k(\x'), \quad\text{ for all }\x\in\safe', k=0,1,2\dots
\]

Once the iterative procedure reaches a steady state, the approximation to $\mfl$ can be used to extract (an approximation to) the reachable set
$\domain_\reach$.
An important observation is that $\safereach^k$ represents all point in $\safe$ that can be reached from $\target$ by a feasible path 
containing at most $k$ contiguous segments through $\safe\backslash\target$.
Thus, under the assumption that the number of connected components of $\safe$ is finite,
the algorithm will converge in a finite number steps.

The numerical approximations $\Mfl^k$, $\Aux^k$ of $\mfl^k$, $\aux^k$, respectively, can be solved on $\grid$ using standard numerical methods described in section \ref{ss:hjb_semilag}.
The iterative method is outlined in Algorithm \ref{alg:reach}.

\begin{algorithm}
\textbf{Initialization:}\\
$\Mfl^0(\x) = \Aux^0(\x) =
\begin{cases}
0 & \forall\x\in\partial\grid,\\
\infty &  \forall\x\in\grid\backslash\partial\grid;\\
\end{cases}$\\
\vspace{0.4cm}

\textbf{Main Loop:}\\
\BlankLine
\ForEach{$k=1,2,\dots$ until $\Mfl^k$ and $\Aux^k$ stop changing} {
\BlankLine
Compute $\Mfl^k$, using $\Aux^{k-1}$ to specify $\grid\cap\safereach^{k-1}$;\\
\BlankLine
Compute $\Aux^k$ from equation \ref{aux_eikonal}, using $\Mfl^{k}$ on $\grid_{\nsafe}$;\\
}

\caption{Reachability algorithm for the budget reset problem.}
\label{alg:reach}
\end{algorithm}

As an illustration,
we implemented Algorithm \ref{alg:reach} and applied it to the same problem as in the left plot of Figure \ref{fig:varyB} (case $\maxb = 0.3$).
The first four iterations are shown in Figure \ref{fig:reachableNumerical}.
The domain $[-1,1]^2$ was discretized on a $400\times 400$ grid.
The algorithm halted after five iterations and took 0.45 seconds.

\begin{figure}[htbp] 
   \centering
   \includegraphics[width=1.4in]{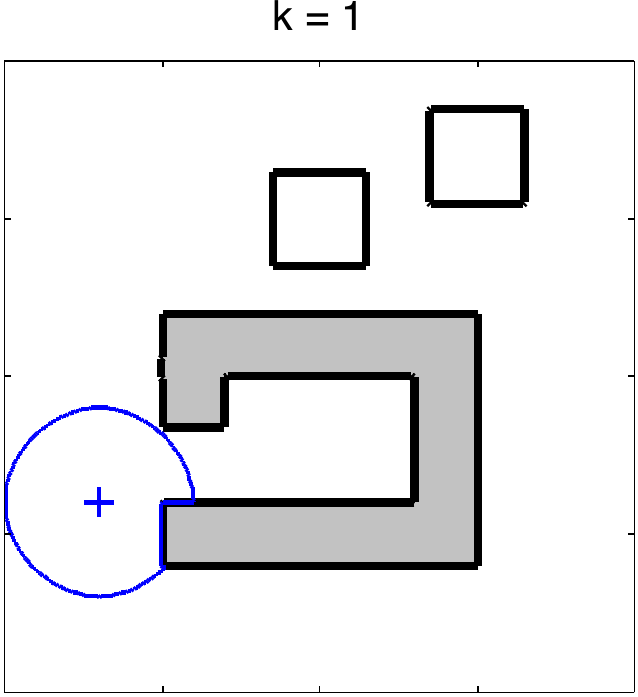}
   \includegraphics[width=1.4in]{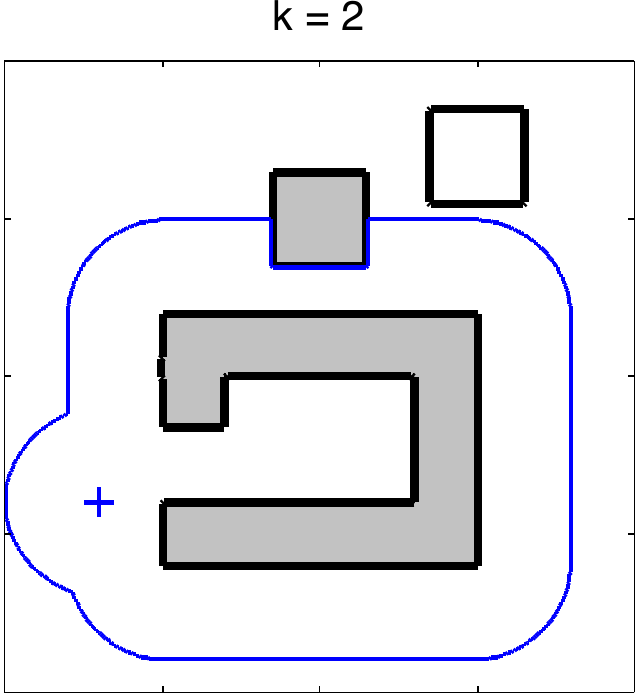}
   \includegraphics[width=1.4in]{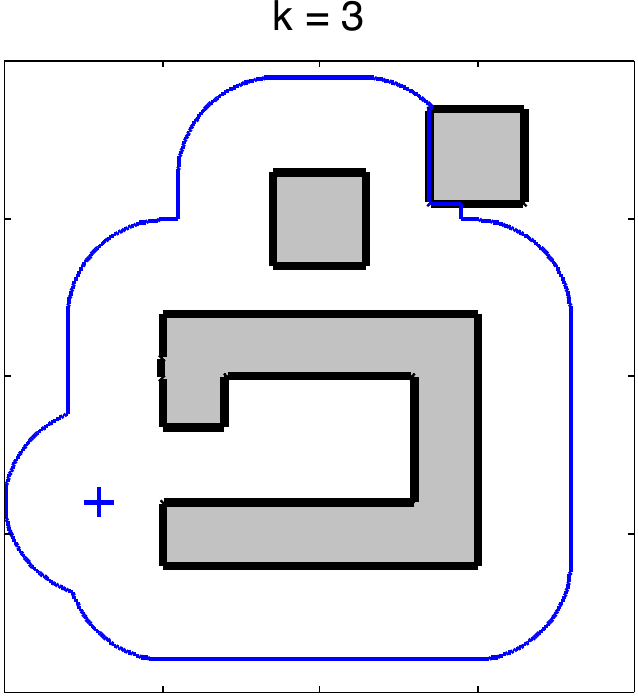}
   \includegraphics[width=1.4in]{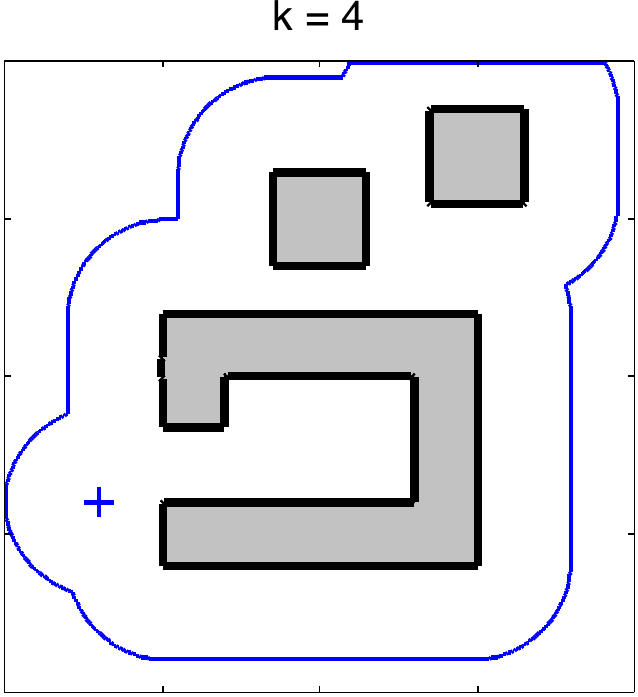}
   \caption{First four iterations for of Algorithm \ref{alg:reach}.
   The blue contours depict the level set $\{\Mfl^k(\x) = \maxb=0.3\}$ and the shaded region is the set $\safereach^{k} = \{\Aux^k(\x) < \infty\}$ at the $k$-th iteration.
   The target is shown by a `+' at $\target = (-0.7,-0.4)$.
   }
   \label{fig:reachableNumerical}
\end{figure}  
\fi

\vspace{.2in}
\noindent
{\bf{Acknowledgments:}}
The authors would like to thank Professor R. Tsai and Dr. Y. Landa, whose
work on visibility and surveillance-evasion problems (joint with Ryo Takei)
served as a starting point for this paper.  The authors are grateful to
them for helping to formulate the problem in section
\ref{visibility} and for many motivating discussions.
The authors are also grateful to Ajeet Kumar, whose source code
developed in \cite{KumarVlad} served as a starting point for our
implementation described in section \ref{s:continuous_reset}.
RT would also like to thank Professors S. Osher and C. Tomlin
for their encouragement and hospitality during the course of this work.


\vspace*{-2mm}


\begin{thebibliography}{99}

\bibitem{Ahuja}
Ahuja, R.K., Magnanti, T.L., \& Orlin, J.B.,
{\it Network Flows},
Prentice Hall, Upper Saddle River, NJ, 1993.

\bibitem{AltonMitchell_MAOUM}
K. Alton \& I. M. Mitchell,
{\it An Ordered Upwind Method with Precomputed Stencil and Monotone Node
Acceptance for Solving Static Convex Hamilton-Jacobi Equations},
J. of Scientific Computing,
51(2), pp. 313-348, 2012.



\bibitem{BardiFalcone} M. Bardi \& M. Falcone,
{\it An Approximation Scheme for the Minimum Time Function},
SIAM J. Control Optim., 28, 950-965, 1990.

\bibitem{BardiDolcetta} M. Bardi \& I. Capuzzo Dolcetta,
{\it Optimal Control and Viscosity Solutions of
Hamilton-Jacobi-Bellman Equations},
Birkh{\"a}user Boston, 1997.

\bibitem{BardiFalconeSoravia1} M. Bardi, M. Falcone, \& P. Soravia,
{Fully discrete schemes for the value function of pursuit-evasion
games}, ``Advances in Dynamic Games and Applications'', T. Basar and
A. Haurie (eds.), Birkhauser, (1994),  89-105.

\bibitem{BardiFalconeSoravia2} M Bardi, M Falcone, \& P Soravia,
{ Numerical methods for pursuit-evasion games via viscosity
solutions} in ``Stochastic and Differential Games: Theory and
Numerical Methods'',  M. Bardi, T.E.S. Raghavan, and T.
Parthasarathy (eds.), Birkhauser, (1999),  105-175.


\bibitem{Bellman_PNAS}
Bellman, R.,
{\it On the Theory of Dynamic Programming},
Proc. Nat. Acad. Sci., 38, pp. 716-719, 1952.



\bibitem{Bertsekas_NObook} Bertsekas, D.P.,
{\it Network  optimization: continuous \& discrete models},
Athena Scientific, Boston, MA, 1998.

\iffullversion
\bibitem{Bertsekas_DPbook} Bertsekas, D.P.,
{\it Dynamic Programming and Optimal Control},
2nd Edition,  Volumes I and II,
Athena Scientific, Boston, MA, 2001.

\bibitem{BertsTsi} Bertsekas, D.P. \& Tsitsiklis, J.N.,
{\it An analysis of stochastic shortest path problems},
Mathematics of Operations Research, 16(3), pp.580-595, 1991.
\fi




\bibitem{BoueDupuis}
Bou\'e, M. \& Dupuis, P.,
{\it Markov chain approximations for deterministic control problems
with affine dynamics and quadratic cost in the control},
SIAM J. Numer. Anal.,  36:3, pp.667-695, 1999.









\bibitem{CCF}
S. Cacace, E. Cristiani. M. Falcone,
{\it A local ordered upwind method for Hamilton-Jacobi and Isaacs equations},
Proceedings of the 18th IFAC World Congress 2011.

\bibitem{CFF}
E. Carlini, M. Falcone, and R. Ferretti,
{\it A time adaptive semi Lagrangian approximation to Mean Curvature Motion},
\iffullversion
Num. Math.
\else
Numerical Mathematics
\fi
Advanced Applications- ENUMATH2005, Springer, Berlin (2006), pp. 732--739.

\bibitem{CFFM}
E. Carlini, M. Falcone, N. Forcadel, and R. Monneau,
{\it Convergence of a Generalized Fast Marching Method
with sign changing velocity for an eikonal equation}, SIAM J. Numer. Anal. 46 (2008).

\bibitem{ChaconVlad_FMSM}
A. Chacon, A. Vladimirsky,
{\it Fast two-scale methods for Eikonal equations},
submitted to SIAM J. on Sci. Comp.


\bibitem{Clawson1}
Z. Clawson, A. Chacon, and A. Vladimirsky,
{\it Causal domain restriction for Eikonal equations},
submitted to SIAM J. on Scientific Computing; preprint available from
\url{http://arxiv.org/abs/1309.2884}


\bibitem{CranLion} Crandall, M.G. \& Lions, P-L.,
{\it Viscosity Solutions of Hamilton-Jacobi Equations},
Tran. AMS, 277, pp. 1-43, 1983.


\bibitem{CristianiFalcone_FMM_Games}
E. Cristiani, M. Falcone,
{\it A Fast Marching Method for Pursuit-Evasion Games},
published electronically in ``Communications to SIMAI Congress'',
SIMAI 2006 (Baia Samuele, Ragusa, Italy, May 22-26, 2006), Vol. 1, 2006.



\bibitem{Dial} R. Dial,
{\it Algorithm 360: Shortest path forest with topological ordering},
Comm. ACM, pp. 632--633, 1969.

\bibitem{Diks} E.W. Dijkstra,
{\it A Note on Two Problems in Connection with Graphs},
Numerische Mathematik, 1 (1959), pp. 269--271.






\bibitem{Falc_Dial} M. Falcone,
{\it{The Minimum Time Problem and Its Applications to
Front Propagation}},
in ``Motion by Mean Curvature and Related Topics'',
Proceedings of the International Conference at Trento, 1992,
Walter de Gruyter, New York, 1994.

\bibitem{Falcone_InfHor} M. Falcone,
{\it A Numerical Approach to the Infinite Horizon
Problem of Deterministic Control Theory},
Applied Math. Optim., 15 (1987), pp. 1--13;
corrigenda 23 (1991), pp. 213--214.

\bibitem{FalconeFerretti} M. Falcone \& R. Ferretti,
{\it Discrete Time High-Order Schemes for Viscosity Solutions
of Hamilton-Jacobi-Bellman Equations},
Numerische Mathematik, 67 (1994), pp. 315--344.


\bibitem{FomelLuoZhao} S. Fomel, S. Luo, and H. Zhao,
{\it Fast sweeping method for the factored eikonal equation},
J. Comp. Phys. 228/17, pp. 6440-6455, 2009.


\bibitem{GonzalezRofman} Gonzales, R. \& Rofman, E.,
{\it On Deterministic Control Problems: an Approximate Procedure
for the Optimal Cost, I, the Stationary Problem},
SIAM J. Control Optim., 23, 2, pp. 242-266, 1985.



\bibitem{Hansen} Hansen, P.,
{\it Bicriterion Path Problems},
in Multiple Criteria Decision Making: Theory and Applications,
G. Fandel and T. Gal (eds), Springer Verlag,
Heidelberg, Germany, pp.109--127, 1980.





\bibitem{Jaffe} Jaffe, J. M.,
{\it Algorithms for finding paths with multiple constraints},
Networks, 14, pp. 95--116, 1984.



\bibitem{KaoOsherQian}
Kao, C.Y., Osher, S., \& Qian, J.,
{\it Lax-Friedrichs sweeping scheme for static Hamilton-Jacobi equations},
J. Comput. Phys., 196:1, pp.367--391, 2004.






\bibitem{KumarVlad}
A. Kumar and A. Vladimirsky,
{\it An efficient method for multiobjective optimal control
and optimal control subject to integral constraints},
Journal of Computational Mathematics, 28(4), pp.517--551, 2010.



\bibitem{KushnerDupuis} H.J. Kushner \& P.G. Dupuis,
{\it Numerical Methods for Stochastic Control Problems
in Continuous Time}, Academic Press, New York, 1992.







\bibitem{Martins} Martins, E.Q.V.,
{\it On a Multicriteria Shortest Path Problem},
European J. of Operational Research, Vol. 16, pp. 236--245, 1984.



\bibitem{MottaRampazzo}
M. Motta and F. Rampazzo,
{\it Multivalued dynamics on a closed domain with absorbing boundary.
Applications to optimal control problems with integral constraints},
Nonlinear Analysis: Theory, Methods \& Applications,
vol. 41, no. 5--6, pp. 631--647, 2000.
















\bibitem{SethFastMarcLeveSet} J.A. Sethian,
{\it A Fast Marching Level Set Method for Monotonically Advancing Fronts},
Proc. Nat. Acad. Sci., 93, 4, pp. 1591--1595, February 1996.

\bibitem{SethBook2} J.A. Sethian,
{\it Level Set Methods and Fast Marching Methods: Evolving Interfaces in
Computational Geometry, Fluid Mechanics, Computer Vision and Materials
Sciences}, Cambridge University Press, 1996.

\bibitem{SethSIAM} Sethian, J.A.,
{\it Fast Marching Methods},
SIAM Review, Vol. 41, No. 2, pp. 199-235, 1999.


\bibitem{SethVlad2} J.A. Sethian \& A. Vladimirsky,
{\it Ordered Upwind Methods for Static Hamilton-Jacobi Equations},
Proc. Nat. Acad. Sci., 98, 20 (2001), pp. 11069--11074.

\bibitem{SethVlad3} J.A. Sethian \& A. Vladimirsky,
{\it Ordered Upwind Methods for Static Hamilton-Jacobi Equations:
Theory \& Algorithms},
SIAM J. on Numerical Analysis 41, 1 (2003), pp. 325-363.



\bibitem{Soravia_IntegralConstraint} Soravia, P.,
{\it Viscosity solutions and optimal control problems
with integral constraints},
Systems and Control Letters, 40 (2000), pp. 325--335.






\iffullversion
\else
\bibitem{Unsafe_full_version}
R. Takei, W. Chen, Z. Clawson, S. Kirov, and A. Vladimirsky,
{\it Optimal control with reset-renewable resources},
Technical report; preprint available from
\url{http://arxiv.org/abs/1110.6221}
\fi

\bibitem{visPaper} Tsai, Y.-H. R. and Cheng, L.-T. and Osher, S. and Burchard, P. and Sapiro, G.
{\it Visibility and its dynamics in a {PDE} based implicit framework}, J. Comput. Phys.
199(1), 260--290, 2004.


\bibitem{TsaiChengOsherZhao}
Tsai, Y.-H.R., Cheng, L.-T., Osher, S., \& Zhao, H.-K.,
{\it Fast sweeping algorithms for a class of Hamilton-Jacobi equations},
SIAM J. Numer. Anal., 41:2, pp.659-672, 2003.


\bibitem{Tsitsiklis} J.N. Tsitsiklis,
{\it Efficient Algorithms for Globally Optimal Trajectories},
IEEE Tran. Automatic Control, 40 (1995), pp. 1528--1538.





\bibitem{VladMSSP} A. Vladimirsky,
{\it Label-setting methods for Multimode Stochastic Shortest Path
problems on graphs}, Mathematics of Operations Research 33(4), pp. 821-838, 2008.




\iffullversion
\bibitem{White_MDP} D.J. White,
{\it Multi-objective infinite-horizon discounted Markov decision processes},
J. of Mathematical Analysis and Applications, 89, pp. 639-647, 1982.
\fi

\bibitem{Zhao} Zhao, H.K.,
{\it Fast Sweeping Method for Eikonal Equations}, Math. Comp.,
74, pp. 603-627, 2005.



\end{thebibliography}
\end{document}